\newcommand{\Bibkeyhack}[3]{}
\newtheorem{theorem}{Theorem}
\newaliascnt{lemma}{theorem}
\newtheorem{lemma}[lemma]{Lemma}
\newaliascnt{corollary}{theorem}
\newtheorem{corollary}[corollary]{Corollary}
\newaliascnt{proposition}{theorem}
\newtheorem{proposition}[proposition]{Proposition}
\theoremstyle{definition}
\newaliascnt{definition}{theorem}
\newtheorem{definition}[definition]{Definition}
\newaliascnt{remark}{theorem}
\newtheorem{remark}[remark]{Remark}
\newaliascnt{example}{theorem}
\newtheorem{example}[example]{Example}
\newtheorem*{acknowledgements}{Acknowledgements}
\newcommand{\C}{\mathbb{C}}
\newcommand{\R}{\mathbb{R}}
\newcommand{\Q}{\mathbb{Q}}
\newcommand{\Z}{\mathbb{Z}}
\newcommand{\bG}{\mathbb{G}}
\newcommand{\bA}{\mathbb{A}}
\newcommand{\bL}{\mathbb{L}}
\newcommand{\bP}{\mathbb{P}}
\newcommand{\bOne}{\mathbf{1}}
\newcommand{\sM}{\mathscr{M}}
\newcommand{\sG}{\mathscr{G}}
\newcommand{\cA}{\mathcal{A}}
\newcommand{\cL}{\mathcal{L}}
\newcommand{\cG}{\mathcal{G}}
\newcommand{\cN}{\mathcal{N}}
\newcommand{\cS}{\mathcal{S}}
\newcommand{\cT}{\mathcal{T}}
\newcommand{\cF}{\mathcal{F}}
\DeclareMathOperator{\init}{in}
\DeclareMathOperator{\Trop}{Trop}
\DeclareMathOperator{\relint}{relint}
\DeclareMathOperator{\wt}{wt}
\DeclareMathOperator{\rk}{rk}
\DeclareMathOperator{\cl}{cl}
\DeclareMathOperator{\fact}{fact}
\newcommand{\barchi}{\overline{\chi}}
\newcommand{\topo}{\mathrm{top}}
\newcommand{\Var}{\mathbf{Var}}
\newcommand{\rat}{\mathrm{rat}}
\title{The motivic zeta functions of a matroid}
\author{David Jensen, Max Kutler, and Jeremy Usatine}
\address{David Jensen, Department of Mathematics, University of Kentucky}
\email{dave.jensen@uky.edu}
\address{Max Kutler, Department of Mathematics, University of Kentucky}
\email{max.kutler@uky.edu}
\address{Jeremy Usatine, Department of Mathematics, Brown University}
\email{jeremy{\_}usatine@brown.edu}
\begin{document}

\begin{abstract}
We introduce motivic zeta functions for matroids.  These zeta functions are defined as sums over the lattice points of Bergman fans, and in the realizable case, they coincide with the motivic Igusa zeta functions of hyperplane arrangements.  We show that these motivic zeta functions satisfy a functional equation arising from matroid Poincar\'{e} duality in the sense of Adiprasito--Huh--Katz. In the process, we obtain a formula for the Hilbert series of the cohomology ring of a matroid, in the sense of Feichtner--Yuzvinsky.  We then show that our motivic zeta functions specialize to the topological zeta functions for matroids introduced by van der Veer, and we compute the first two coefficients in the Taylor expansion of these topological zeta functions, providing affirmative answers to two questions posed by van der Veer.
\end{abstract}

\maketitle

\numberwithin{theorem}{section}
\numberwithin{lemma}{section}
\numberwithin{corollary}{section}
\numberwithin{proposition}{section}
\numberwithin{remark}{section}
\numberwithin{definition}{section}
\numberwithin{example}{section}

\section{Introduction}
\label{introduction}

In this paper, we introduce a notion of motivic zeta functions for matroids.  In the realizable case, these coincide with the motivic Igusa zeta functions of hyperplane arrangements.  Matroids, which generalize the combinatorics of hyperplane arrangements, have become a topic of renewed interest in algebraic geometry.  Recent developments have shown that many seemingly geometric properties of hyperplane arrangements are in fact not geometric at all, arising instead from the combinatorics of matroids \cite{FeichtnerYuzvinsky, AdiprasitoHuhKatz, EliasProudfootWakefield, Eur, LopezdeMedranoRinconShaw}.  This is striking because the vast majority of matroids cannot be realized by hyperplane arrangements \cite{Nelson}.

The motivic zeta functions for matroids satisfy many properties predicted by the geometry of motivic Igusa zeta functions.  For example, we prove that these zeta functions are always rational.  Even in the absence of a realizing hyperplane arrangement, we show that these zeta functions satisfy a functional equation coming from Poincar\'{e} duality for matroids in the sense of Adiprasito--Huh--Katz \cite{AdiprasitoHuhKatz}.  This functional equation implies an identity in terms of the characteristic polynomials of matroids.  We also prove that the motivic zeta function specializes to the topological zeta function of a matroid recently introduced by van der Veer \cite{vanderVeer}.  We show that motivic zeta functions satisfy a recurrence relation in terms of matroid minors, and we use this to compute the first two coefficients in the Taylor expansion of the topological zeta function, giving affirmative answers to two questions posed by van der Veer \cite{vanderVeer}.  In \autoref{arbitrarybuildingsets}, we demonstrate how our definition of motivic zeta functions allows us to prove that certain expressions in terms of building sets do not actually depend on the chosen building set.

Throughout the paper, we always assume that a matroid has a finite and non-empty ground set.  Everything in this paper applies to matroids regardless of realizability, but we often give geometric interpretations in the realizable case.  For ease of exposition, when we say that a matroid is realizable, we always mean realizable over $\C$.  Similarly, if we say that $\cA$ is a hyperplane arrangement, we will always mean that $\cA$ is a central essential hyperplane arrangement in some affine space over $\C$.

\subsection{The motivic zeta functions of a matroid}

We now introduce motivic zeta functions of matroids.  We refer the reader to \autoref{defWeight} and \autoref{defVert} for the relevant notation.

\begin{definition}
\label{definitionmotiviczetafunctions}
Let $M$ be a matroid with ground set $E$. We define the \textbf{motivic zeta function} of $M$ to be
\[
	Z_M(q, T) = \sum_{w \in \Z_{\geq 0}^E} \chi_{M_w}(q) q^{-\rk M - \wt_M(w)} T^{|w|} \in \Z[q^{\pm 1}]\llbracket T \rrbracket,
\]
we define the \textbf{local motivic zeta function} of $M$ to be
\[
	Z_{M}^0(q, T) = \sum_{w \in \Z_{> 0}^E} \chi_{M_w}(q) q^{-\rk M - \wt_M(w)} T^{|w|} \in \Z[q^{\pm 1}]\llbracket T \rrbracket,
\]
and we define the \textbf{reduced motivic zeta function} of $M$ to be
\[
	\overline{Z}_M(q, T) = \sum_{v \in \Z^E/\Z\bOne} \barchi_{M_v}(q) q^{-(\rk M-1) - \overline{\wt}_M(v)} T^{\overline{|v|}} \in \Z[q^{\pm 1}]\llbracket T \rrbracket.
\]
\end{definition}

\begin{remark}
See \autoref{relationshipbetweenvariousmotiviczetafunctions} below for a simple relationship between $Z_M(q,T)$, $Z_M^0(q,T)$, and $\overline{Z}_M(q,T)$.  Specifically, both $Z_M(q,T)$ and $Z_M^0(q,T)$ can be obtained from $\overline{Z}_M(q,T)$ by multiplying by explicit rational functions.  Because of this, \autoref{functionalequationforreducedmotiviczetafunction} and \autoref{recurrencerelationmotiviczetafunctionat0} below could be stated in terms of any of the three zeta functions, and our choices are based purely on elegance.
\end{remark}

\begin{remark}
The motivic zeta function of a matroid is a strictly more refined invariant than its characteristic polynomial.  To see this, we give an example in \autoref{sectionwithexamples} of a pair of matroids with the same characteristic polynomial but distinct motivic zeta functions.  We also give an example of a pair of non-isomorphic matroids with the same motivic zeta function.
\end{remark}

\begin{remark}
If a matroid contains a loop, then its characteristic polynomial is equal to zero.  Thus the sum defining $Z_M(q,T)$ (resp. $Z_{M}^0(q,T)$, $\overline{Z}_M(q,T)$) may be taken over those $w \in \Z_{\geq 0}^E$ (resp. $w\in \Z_{>0}^E$, $v \in \Z^E / \Z \bOne$) where $M_w$ (resp. $M_w$, $M_v$) is loopless.
\end{remark}

We briefly explain why it is appropriate to call these ``motivic'' zeta functions.  Let $K_0(\Var_\C)$ denote the Grothendieck ring of complex varieties, let $\bL \in K_0(\Var_\C)$ denote the class $[\bA_\C^1]$ of the affine line, and let $\sM_\C$ denote the ring obtained from $K_0(\Var_\C)$ by inverting $\bL$.  Kontsevich introduced ``motivic integration'' \cite{Kontsevich}, a theory of integration modeled on integration over $p$-adic manifolds. In this motivic integration, the $p$-adic manifold is replaced by the so-called arc scheme of a complex variety, and the integrals take values in the localized Grothendieck ring $\sM_\C$.  Denef and Loeser then used motivic integration to define the motivic Igusa zeta function of a hypersurface (sometimes also called the naive motivic zeta function of Denef and Loeser) \cite{DenefLoeser1998}.  This zeta function is a power series with coefficients in $\sM_\C$.  The motivic Igusa zeta function specializes to both Igusa's local zeta function and the topological zeta function of Denef and Loeser.  We refer to \cite{DenefLoeser2001} and \cite{CNS} for more information on the motivic Igusa zeta function and its applications.

If a matroid $M$ is realized by a hyperplane arrangement $\cA$, then $Z_M(\bL,T)$ and $Z_M^0(\bL,T)$ are equal to the motivic Igusa zeta function and the motivic Igusa zeta function at 0, respectively, of the arrangement $\cA$ \cite[Theorem 1.7]{KutlerUsatine}. Similarly if $\overline{\cA}$ is the projective arrangement obtained by projectivizing $\cA$, then $\overline{Z}_M(\bL,T)$ is equal to the motivic Igusa zeta function of $\overline{\cA}$ (see \autoref{rmk:realized}). Thus we call $Z_M(q,T)$, $Z_M^0(q,T)$, and $\overline{Z}_M(q,T)$ motivic zeta functions because, in the realizable case, they coincide with the motivic Igusa zeta functions of hyperplane arrangements.

\subsection{Rationality of the motivic zeta functions}

Whenever a new zeta function is introduced, it is reasonable to return to the Weil conjectures.  Specifically, one should ask whether the zeta function is rational, find a functional equation induced by Poincar\'{e} duality, and conduct a careful analysis of the function's zeros and poles.  In this paper, we study the first two of these questions.  In \autoref{rationalitysection}, we prove the following.

\begin{theorem}
\label{thm:rational}
Let $M$ be a matroid.  The motivic zeta functions $Z_M(q,T)$, $Z_M^0(q,T)$, and $\overline{Z}_M(q,T)$ are all rational functions in $q$ and $T$.
\end{theorem}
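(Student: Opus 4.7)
The plan is to exploit the Bergman fan structure on $\R_{\geq 0}^E$ to decompose the sum defining $Z_M(q,T)$ into finitely many pieces, each of which is a generating function of lattice points in a rational polyhedral cone. Such generating functions are well-known to be rational by standard techniques (writing a cone as a disjoint union of shifted simplicial subcones and using the geometric series).

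More precisely, I would first establish (or recall from earlier in the paper where $\wt_M$ and $M_w$ are defined) that the Bergman fan $\Sigma_M$, or rather its refinement by the coordinate fan to give a rational polyhedral fan structure on $\R_{\geq 0}^E$, has the property that on the relative interior of each cone $\sigma$:
\begin{enumerate}
\item the matroid $M_w$ is constant, equal to some $M_\sigma$, so $\chi_{M_w}(q) = \chi_{M_\sigma}(q)$;
\item the weight $\wt_M(w)$ agrees with a linear function $\ell_\sigma(w)$ on $\sigma$.
\end{enumerate}
Granting these, one can rewrite
\[
    Z_M(q,T) = q^{-\rk M} \sum_{\sigma} \chi_{M_\sigma}(q) \sum_{w \in \relint(\sigma) \cap \Z^E} q^{-\ell_\sigma(w)} T^{|w|},
\]
where the outer sum ranges over the finitely many cones of the chosen fan refinement.

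Next, for each cone $\sigma$, the inner sum is a multivariate generating function of the lattice points in the relative interior of a rational polyhedral cone, evaluated at monomials in $q$ and $T$ whose exponents are linear in $w$. After subdividing $\sigma$ into simplicial cones if necessary and using a fundamental parallelepiped decomposition, this sum becomes a finite sum of expressions of the form $P(q,T) / \prod_i (1 - q^{a_i} T^{b_i})$ with $P \in \Z[q^{\pm 1},T]$; in particular it is a rational function in $q$ and $T$. Since a finite sum of rational functions is rational, $Z_M(q,T)$ is rational. The argument for $Z_M^0(q,T)$ is identical, simply restricting to cones contained in $\R_{>0}^E$ (equivalently, imposing that all coordinates be strictly positive in the inner sum). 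For $\overline{Z}_M(q,T)$, the same argument applies after quotienting out by the lineality direction $\bOne$ and using the induced fan on $\R^E/\R\bOne$.

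The principal obstacle is purely bookkeeping: verifying that the Bergman fan indeed refines to a rational polyhedral fan on which $\wt_M$ is piecewise linear and $w \mapsto M_w$ is constant on relative interiors. Once this is in place, the rationality is a formal consequence of the classical rationality of cone generating functions, and no genuinely new combinatorial input is required. I would therefore expect the bulk of \autoref{rationalitysection} to be devoted to a careful description of the fan and the verification of these two compatibility properties, with the rationality conclusion itself being short.
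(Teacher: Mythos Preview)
Your proposal is correct and matches the paper's approach almost exactly: decompose the sum over the cones of the Bergman fan, use constancy of $M_w$ and linearity of $\wt_M$ on each cone, and sum geometric series. The only refinement the paper adds is that the specific fan $\Sigma(M)$ (described in the preliminaries, not in \autoref{rationalitysection}) is already unimodular, so no further simplicial subdivision or parallelepiped argument is needed and one obtains the explicit closed formula of \autoref{rationalformulaformotiviczetafunctionmaximalbuildingset}; for $\overline{Z}_M$ the paper identifies $\Z^E/\Z\bOne$ with the lattice points on the boundary $\partial\R_{\geq 0}^E$ rather than working abstractly in the quotient.
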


Indeed, \autoref{rationalformulaformotiviczetafunctionmaximalbuildingset} gives an explicit formula for each of these three functions as rational functions.  In the realizable case, this formula agrees with that obtained by applying \cite[Corollary 3.3.2]{DenefLoeser2001} to the log resolution given by a wonderful model of the corresponding hyperplane arrangement.  In particular, this corresponds to considering the wonderful model with respect to the ``maximal building set''.  There are other wonderful models corresponding to other ``building sets'' \cite{DeConciniProcesi}, and these other building sets have been defined for non-realizable matroids by Feichtner and Kozlov \cite{FeichtnerKozlov}.  In \autoref{arbitrarybuildingsets}, we generalize our results to other building sets.  In particular, \autoref{rationalformulaformotiviczetafunction} gives alternate formulas for $Z_M(q,T)$, $Z_M^0(q,T)$, and $\overline{Z}_M(q,T)$ as rational functions in terms of arbitrary building sets.  These formulas provide a straightforward proof that certain expressions do not depend on the choice of building set.  This illustrates another philosophical parallel with motivic integration, which is often used to prove that certain constructions in terms of log resolutions do not actually depend on the choice of log resolution.

\subsection{A functional equation}

The motivic Igusa zeta function of a projective hypersurface satisfies a certain functional equation \cite[Chapter 7 Proposition 3.3.10]{CNS}.  The functional equation is in terms of a duality morphism $\sM_\C \to \sM_\C$ introduced by Bittner \cite[Corollary 3.4]{Bittner},  which can be thought of as a motivic incarnation of Poincar\'{e} duality \cite[Chapter 2 Remark 5.1.9]{CNS}.  Our second main result shows that the reduced motivic zeta function of a matroid $M$ satisfies this functional equation even when $M$ is not realizable.

\begin{theorem}
\label{functionalequationforreducedmotiviczetafunction}
Let $M$ be a matroid. The reduced motivic zeta function of $M$ satisfies the functional equation
\[
	\overline{Z}_M(q^{-1}, T^{-1}) = q^{\rk M - 1}\, \overline{Z}_M(q,T).
\]
\end{theorem}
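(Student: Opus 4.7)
The plan is to derive the functional equation from the explicit rational expression for $\overline{Z}_M(q,T)$ provided by \autoref{thm:rational} (specifically the formula of \autoref{rationalformulaformotiviczetafunctionmaximalbuildingset}, which the excerpt promises). That formula writes $\overline{Z}_M(q,T)$ as a sum, indexed by cones of the Bergman fan of $M$ (i.e.\ by flags of proper nonempty flats of $M$), of rational functions in $q$ and $T$. The strategy is to reorganize each cone's contribution so that one factor is a Hilbert series of a Chow ring of a matroid minor (in the sense of Feichtner--Yuzvinsky), and another factor consists of simple rational functions in $T$ coming from geometric series over the lattice points in the relative interior of the cone. The functional equation will then follow by applying the substitution $q \mapsto q^{-1}$, $T \mapsto T^{-1}$ cone by cone.

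The central input is Poincar\'e duality for the Chow ring $A^*(M)$ of \cite{FeichtnerYuzvinsky}, established unconditionally by Adiprasito--Huh--Katz \cite{AdiprasitoHuhKatz}: the ring $A^*(M)$ is a Poincar\'e duality algebra of top degree $\rk M - 1$, so its Hilbert series is palindromic,
\[
  q^{\rk M - 1}\,\mathrm{Hilb}(A^*(M); q^{-1}) = \mathrm{Hilb}(A^*(M); q).
\]
Combined with a combinatorial formula for $\mathrm{Hilb}(A^*(M); q)$ as a sum over flags of flats (which the abstract advertises as a byproduct of this paper), this gives the palindromic symmetry that makes the functional equation possible. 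Concretely, I would show that the polynomial-in-$q$ pieces of each cone's contribution assemble into Hilbert series of Chow rings of the minors $M|_{F_i}/F_{i-1}$ associated to the flag $\emptyset \subsetneq F_1 \subsetneq \cdots \subsetneq F_k \subsetneq E$, so that palindromicity applies factor by factor.

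The proof itself then consists of checking that, for each flag $\mathcal{F}$, the substitution $q \mapsto q^{-1}$, $T \mapsto T^{-1}$ multiplies the $\mathcal{F}$-contribution by exactly $q^{\rk M - 1}$. The palindromic symmetry of each Hilbert-series factor contributes the right power of $q$, with the exponents adding up to $\rk M - 1$ because the ranks of the minors $M|_{F_i}/F_{i-1}$ sum to $\rk M$ and one unit is lost in passing to the reduced (projectivized) setting via the quotient by $\Z \mathbf{1}$. The geometric-series factors in $T$ transform predictably under $T \mapsto T^{-1}$, since $T^k/(1-T^k)$ and $T^{-k}/(1-T^{-k})$ differ by $-1$, and the residual signs/units cancel against changes in the $T$-weights in the numerator.

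The main obstacle is establishing the cone-by-cone decomposition with the right identification of Hilbert-series factors; this is really the technical heart of the paper and is precisely why the authors needed to derive a flag-of-flats formula for $\mathrm{Hilb}(A^*(M); q)$ along the way. Once that decomposition is in place, the functional equation is almost mechanical. A secondary obstacle is accounting correctly for the $\Z \mathbf{1}$ quotient: one must check that restricting lattice-point sums to the reduced setting does not spoil the symmetry between $v$ and a "dual" lattice point (morally $-v$), which amounts to verifying that the combinatorics of cones in the Bergman fan under the involution $v \mapsto -v$ is compatible with matroid Poincar\'e duality.
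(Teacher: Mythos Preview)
Your architecture is right: start from the rational formula of \autoref{rationalformulaformotiviczetafunctionmaximalbuildingset}, invoke Adiprasito--Huh--Katz Poincar\'e duality for the matroid Chow ring, and check the functional equation term by term. But there is a genuine gap in the middle. The coefficient attached to a flag $\cF$ in \autoref{rationalformulaformotiviczetafunctionmaximalbuildingset} is $\overline{\chi}_{M_\cF}(q)/(q-1)^{\#\cF}$, which by \autoref{initialmatroiddecompositionmatroidminorsmaximalbuildingset} factors as a product of \emph{reduced characteristic polynomials} of the minors $M|F_i/F_{i-1}$. These are \emph{not} the Hilbert series of the Chow rings of those minors, and they are \emph{not} palindromic in general (already $\overline{\chi}_{U_{2,3}}(q)=q-2$). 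So the assertion that ``for each flag $\cF$, the substitution multiplies the $\cF$-contribution by exactly $q^{\rk M-1}$'' is false for the sum as written; the per-flag terms in the original formula simply do not transform correctly.

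What the paper does to repair this is insert a M\"obius-inversion step. One writes $\overline{\chi}_{M_\cF}(q)/(q-1)^{\#\cF}$ as an alternating sum of the Poincar\'e polynomials $\overline{P}_M^\cG(q)$ over refinements $\cG\supseteq\cF$ (equation~\eqref{eq:MobiusPoincare}), and after swapping the order of summation one obtains a new expression for $\overline{Z}_M$ (\autoref{rewritingreducedmotiviczetafunctionintermsofchowpolynomials}) whose coefficient at $\cF$ is $(-1)^{\#\cF}\overline{P}_M^\cF(q)$ and whose $T$-factor is $\prod_{F\in\cF}\frac{1-q^{-(\rk F-1)}T^{\#F}}{1-q^{-\rk F}T^{\#F}}$. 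It is \emph{these} polynomials $\overline{P}_M^\cF(q)$ that, via \autoref{thm:hilbertseries} and~\eqref{poincareatflagproductofpoincare}, are products of Hilbert series of Chow rings of minors and hence palindromic by \cite{AdiprasitoHuhKatz} (\autoref{numericalPoincaredualitymaximalbuildingset}). In this reorganized sum the functional equation does hold term by term, exactly as you envision. Your final remark about the involution $v\mapsto -v$ on the Bergman fan is a red herring; no such symmetry is used or needed.
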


The proof of \autoref{functionalequationforreducedmotiviczetafunction} relies on the fact that certain polynomials $\overline{P}_M^\cF(q)$, defined in \autoref{poincarepolynomialsection}, are palindromic.  In \autoref{poincarepolynomialsection}, we give a proof of this fact which uses Poincar\'{e} duality for matroid cohomology, in the sense of Adiprasito--Huh--Katz \cite{AdiprasitoHuhKatz}.  The relationship between the duality morphism $\sM_\C \to \sM_\C$ and Poincar\'{e} duality for smooth projective varieties suggests that our functional equation is ``morally'' a consequence of matroid Poincar\'{e} duality.  More precisely, this palindromicity is a consequence of the following relationship between the polynomials $\overline{P}_M^\cF(q)$ and the matroid cohomology algebras defined by Feichtner and Yuzvinsky \cite{FeichtnerYuzvinsky}.  (See \autoref{def:Poincare} and \autoref{def:Coh} for the definitions of $\overline{P}_M (q)$ and the matroid cohomology algebra $D^\bullet(M)$, respectively.)

\begin{theorem}
\label{thm:hilbertseries}
Let $M$ be a loopless matroid. Then
\[
	\overline{P}_M(q^2) = \sum_{i \geq 0} \rk_\Z D^i(M)q^i.
\]
\end{theorem}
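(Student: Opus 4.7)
My approach is to expand both sides as sums indexed by chains of proper nonempty flats of $M$, and match them chain by chain. For the right-hand side, I would invoke the Feichtner--Yuzvinsky presentation of $D^\bullet(M)$. With respect to the maximal building set, this algebra carries an explicit $\Z$-basis of monomials whose elements are supported on chains $\mathcal{F}$ of proper nonempty flats and whose exponent vectors on such a chain are subject to strict positivity together with upper bounds governed by the rank jumps along $\mathcal{F}$. Because each generator of $D^\bullet(M)$ sits in cohomological degree $2$, summing $q^{2\sum a_F}$ over this basis and organizing by chain yields
\[
\sum_{i \geq 0} \rk_\Z D^i(M)\, q^i = \sum_{\mathcal{F}} \prod_{F \in \mathcal{F}} \bigl( q^2 + q^4 + \cdots + q^{2 c_F(\mathcal{F})}\bigr),
\]
where the $c_F(\mathcal{F})$ are the rank jumps along $\mathcal{F}$ and the sum ranges over chains of proper nonempty flats, with the empty chain contributing $1$.

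For the left-hand side, I would unpack $\overline{P}_M(q)$ via \autoref{def:Poincare}. The rationality formula \autoref{rationalformulaformotiviczetafunctionmaximalbuildingset} for $\overline{Z}_M(q,T)$ already stratifies its expansion by cones of the Bergman fan, which are themselves indexed by chains of flats, with the reduced characteristic polynomials $\overline{\chi}_{M_v}$ providing the numerators. I expect that the relevant specialization gives an analogous chain expansion of $\overline{P}_M(q^2)$: each chain of proper nonempty flats contributes a product of $q^2$-integer factors built from the rank jumps appearing in the factorization of $\overline{\chi}_{M}$ along the chain, using the identity $\overline{\chi}_M(q) = \chi_M(q)/(q-1)$ for loopless matroids to turn each local factor into a geometric sum of the correct length.

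The crux of the argument is then a single-chain identity: for every chain $\mathcal{F}$ of proper nonempty flats, the product of $q^2$-integer factors arising on the $\overline{P}_M$ side must agree with $\prod_{F \in \mathcal{F}} ( q^2 + q^4 + \cdots + q^{2 c_F(\mathcal{F})})$ produced by the Feichtner--Yuzvinsky basis count. The main obstacle will be verifying that the numerical bounds on exponents appearing in the Chow ring basis precisely match the rank jumps determined by the local structure of $L(M) \setminus \{\hat{0},\hat{1}\}$ at each flat of $\mathcal{F}$. Once this rank-jump bookkeeping is pinned down on a single chain, the theorem follows by summing the matched contributions over all chains of proper nonempty flats.
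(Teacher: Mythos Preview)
Your proposal has a genuine gap: the hoped-for ``single-chain identity'' is false. On the $\overline{P}_M$ side, the contribution of a chain $\cF \in \cN^\circ(M)$ is
\[
	\frac{\barchi_{M_\cF}(q)}{(q-1)^{\#\cF}} = \prod_{F \in \cF \cup \{E\}} \barchi_{M|F/z_{\cF \cup \{E\}}(F)}(q),
\]
by \autoref{initialmatroiddecompositionmatroidminorsmaximalbuildingset}. These factors are reduced characteristic polynomials of matroid minors, and in general they are \emph{not} $q$-integers or geometric sums; the identity $\barchi = \chi/(q-1)$ does not turn them into such. On the Feichtner--Yuzvinsky side, the contribution of a chain $\cF \in \cN(M)$ is $\prod_{F \in \cF}\bigl([\rk F - \rk z_\cF(F)]_q - 1\bigr)$, a product of truncated $q$-integers. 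These two families of chain contributions simply do not agree term by term. For instance, take $M = U_{2,3}$: on the $\overline{P}_M$ side the empty chain contributes $\barchi_{U_{2,3}}(q) = q-2$ and each of the three singleton chains contributes $1$, giving $q+1$; on the $H_M$ side the empty chain contributes $1$, the three singleton chains each contribute $[1]_q - 1 = 0$, and the chain $\{E\}$ contributes $[2]_q - 1 = q$, again giving $q+1$. The totals agree, but no chain-by-chain matching exists. (Note also that the two sums are not even indexed by the same set: $\overline{P}_M$ is a sum over $\cN^\circ(M)$, while the monomial-basis count is a sum over $\cN(M)$, which includes chains containing $E$.)

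The paper's argument proceeds quite differently. One sets $H_M(q) = \sum_{\cF \in \cN(M)} \prod_{F \in \cF}\bigl([\rk F - \rk z_\cF(F)]_q - 1\bigr)$, observes from the Feichtner--Yuzvinsky basis that $H_M(q^2)$ is the Hilbert series of $D^\bullet(M)$, and then proves that $\overline{P}_M$ and $H_M$ satisfy the \emph{same recursion}
\[
	\overline{P}_M(q) = \barchi_M(q) + \sum_{F \in \widehat\cL} \barchi_{M/F}(q)\,\overline{P}_{M|F}(q), \qquad H_M(q) = \barchi_M(q) + \sum_{F \in \widehat\cL} \barchi_{M/F}(q)\,H_{M|F}(q),
\]
the first by sorting flags in $\cN^\circ(M)$ by their maximal element and using \autoref{initialmatroiddecompositionmatroidminorsmaximalbuildingset}, the second by applying the M\"obius-inversion identity of \autoref{mobius}. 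Induction on rank then gives $\overline{P}_M = H_M$. The essential point is that the equality is a global one, established by a recursion that mixes contributions across chains; there is no local matching of the sort you are aiming for.
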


We refer the reader to \autoref{thm:hilbertseriesgeneral} for a generalization of \autoref{thm:hilbertseries} for arbitrary building sets.

The functional equation specializes to an identity involving the characteristic polynomials of matroids.  We first set some notation.  If $M$ is a matroid on ground set $E$, then the lattice of flats $\cL$ of $M$ has maximal element $E$ and minimal element $\cl(\emptyset)$.  Let $\cN(M)$ denote the \textbf{order complex} of $\cL_{>\cl(\emptyset)}$.  In other words, $\cN(M)$ is the collection of flags of non-minimal flats of $M$.  We also set
\[
	\cN^*(M) = \{ \cF \in \cN(M) \mid E \in \cF\},
\]
and
\[
	\cN^\circ(M) = \cN(M) \setminus \cN^*(M).
\]

In \cite{DenefLoeser1998, DenefLoeser2002} Denef and Loeser use a specialization of a closely related motivic zeta function to define the motivic nearby fiber and the motivic Milnor fiber (see also \cite[Definition 3.5.3]{DenefLoeser2001} or \cite[Chapter 7 Definition 4.2.3]{CNS}).  Applying this specialization to our functional equation, we obtain the following identity. (See \autoref{defMF} for the definition of $M_{\cF}$.)

\begin{corollary}
\label{alternatingsumofcharactersticpolynomialsgivesbackwardscharacteristicpolynomial}
Let $M$ be a matroid. Then
\[
	\sum_{\cF \in \cN^\circ(M)} (-1)^{\# \cF} \overline{\chi}_{M_{\cF}}(q) = q^{\rk M - 1}\, \overline{\chi}_M(q^{-1}).
\]
\end{corollary}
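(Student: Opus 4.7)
The plan is to specialize the functional equation of \autoref{functionalequationforreducedmotiviczetafunction} under the Denef--Loeser motivic nearby fiber map. Writing any rational function in the standard form $\sum_I c_I(q) \prod_{i \in I} \frac{q^{-a_i} T^{b_i}}{1 - q^{-a_i} T^{b_i}}$ with $a_i, b_i > 0$, this specialization $\mathrm{sp}$ returns $\sum_I (-1)^{\# I} c_I(q)$; equivalently, $\mathrm{sp}$ sends each factor $\frac{1}{1 - q^{-a} T^b}$ to $0$. By \autoref{thm:rational}, $\mathrm{sp}$ is defined on $\overline{Z}_M(q,T)$ and on $\overline{Z}_M(q^{-1}, T^{-1})$.

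The first step is to apply $\mathrm{sp}$ to the right-hand side $q^{\rk M - 1}\, \overline{Z}_M(q,T)$ of the functional equation. Using the explicit rational formula of \autoref{rationalformulaformotiviczetafunctionmaximalbuildingset}, which expresses $\overline{Z}_M(q,T)$ as a sum over flags $\cF \in \cN^*(M)$ of terms $q^{-(\rk M - 1)} \overline{\chi}_{M_\cF}(q) \prod_{F \in \cF \setminus \{E\}} \frac{q^{-a_F} T^{b_F}}{1 - q^{-a_F} T^{b_F}}$ for positive integer exponents $a_F, b_F$, specialization collapses each geometric factor to $-1$, contributing $(-1)^{\#\cF - 1}$ per flag. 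Via the bijection $\cN^*(M) \to \cN^\circ(M)$, $\cF \mapsto \cF \setminus \{E\}$, together with the identification $M_\cF = M_{\cF \setminus \{E\}}$ from \autoref{defMF}, this gives
\[
\mathrm{sp}\!\bigl(q^{\rk M - 1}\, \overline{Z}_M(q,T)\bigr) = \sum_{\cF' \in \cN^\circ(M)} (-1)^{\#\cF'}\, \overline{\chi}_{M_{\cF'}}(q),
\]
which is the LHS of the corollary.

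The second step is to compute $\mathrm{sp}\!\bigl(\overline{Z}_M(q^{-1}, T^{-1})\bigr)$. Substituting $q \mapsto q^{-1}$ and $T \mapsto T^{-1}$ in the rational formula transforms each geometric factor into
\[
\frac{q^{a_F} T^{-b_F}}{1 - q^{a_F} T^{-b_F}} = -\frac{1}{1 - q^{-a_F} T^{b_F}},
\]
which is annihilated by $\mathrm{sp}$. Hence every summand whose flag properly contains $\{E\}$ vanishes, and the only surviving contribution is from $\cF = \{E\}$, whose empty geometric product and prefactor $q^{-(\rk M - 1)} \overline{\chi}_M(q)$ together yield $q^{\rk M - 1} \overline{\chi}_M(q^{-1})$. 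Equating this with the computation of the first step via the functional equation produces the stated identity. The main technical point is reading the rational formula of \autoref{rationalformulaformotiviczetafunctionmaximalbuildingset} precisely---verifying that exactly the flats $F \in \cF \setminus \{E\}$ contribute geometric factors with positive exponents, and that $M_\cF$ agrees with $M_{\cF \setminus \{E\}}$---so that the powers of $-1$ and $q$ align on the nose.
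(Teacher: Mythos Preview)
Your approach is essentially the paper's: apply the ``$T\to\infty$'' specialization to both sides of the functional equation and read off the two sides from \autoref{rationalformulaformotiviczetafunctionmaximalbuildingset}. Two small points of comparison. First, you misquote \autoref{rationalformulaformotiviczetafunctionmaximalbuildingset}: the formula for $\overline{Z}_M(q,T)$ is a sum over $\cF\in\cN^\circ(M)$ with the product taken over \emph{all} $F\in\cF$, not over $\cN^*(M)$ with the product over $\cF\setminus\{E\}$; your re-indexing via the bijection $\cN^*(M)\leftrightarrow\cN^\circ(M)$ is therefore an unnecessary detour (though harmless, since the two descriptions are equivalent). Second, the paper defines the specialization as $\lim_{T\to\infty}f(q,T)=f(q,T^{-1})|_{T=0}$, which makes well-definedness transparent and also simplifies your second step: rather than substituting into the rational formula and checking that all but one term is killed, one immediately gets $\lim_{T\to\infty}\overline{Z}_M(q^{-1},T^{-1})=\overline{Z}_M(q^{-1},0)=q^{\rk M-1}\overline{\chi}_M(q^{-1})$ straight from the power-series definition.
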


We refer the reader to \autoref{alternatingsumarbitrarybuildingset} for a generalization of \autoref{alternatingsumofcharactersticpolynomialsgivesbackwardscharacteristicpolynomial} for arbitrary building sets.

\subsection{The topological zeta function}

Our final main results concern the topological zeta function of a matroid, which we define as a certain specialization of our motivic zeta function $Z_M(q,T)$.  The topological zeta function was introduced by Denef and Loeser \cite{DenefLoeser1992} as a ``1-adic'' version of Igusa's local zeta function.
It was only later that Kontsevich introduced motivic integration, and the topological zeta function of a hypersurface was shown to be a specialization of its motivic Igusa zeta function \cite[Section 3.4]{DenefLoeser2001}.  The situation for zeta functions of matroids parallels this story---in \cite{vanderVeer}, van der Veer introduces a topological zeta function for finite, ranked, atomic lattices.  In the case where the lattice is the lattice of flats of a simple matroid, we show that van der Veer's topological zeta function is a specialization of our motivic zeta function.  We refer the reader to \autoref{Sec:Top} for the definition of the specialization $\mu_\topo$.

\begin{definition}
Let $M$ be a matroid. We define the \textbf{topological zeta function} of $M$ to be
\[
	Z_M^\topo(s) = \mu_{\topo}(Z_M(q,T)) \in \Q(s).
\]
\end{definition}

\begin{theorem}
\label{ourtopologicalspecializestovdv}
Let $M$ be a simple matroid with lattice of flats $\cL$, and let $Z_\cL (s)$ be the topological zeta function of $\cL$, in the sense of \cite[Definition 1]{vanderVeer}. Then
\[
	Z_M^{\topo}(s) = Z_\cL(s).
\]
\end{theorem}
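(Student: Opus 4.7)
The plan is to reduce both sides to sums over chains of flats in $\cL$ and compare them term-by-term. This approach is natural because van der Veer's $Z_\cL(s)$ is defined purely from the combinatorics of $\cL$, and for a simple matroid the lattice of flats recovers $M$, so every ingredient on both sides can be indexed by the same combinatorial data.

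First, I would start from the explicit rational formula for $Z_M(q,T)$ provided by \autoref{rationalformulaformotiviczetafunctionmaximalbuildingset}. This expresses $Z_M(q,T)$ as a sum over chains $\cF \in \cN(M)$ of products of rational factors in $q$ and $T$, obtained by summing geometric series over the cones of the fine Bergman fan structure indexed by flags of flats. Applying the specialization $\mu_\topo$ of \autoref{Sec:Top} term-by-term to this expression---using the standard Denef--Loeser rule that converts each denominator factor of the form $q^a T^b - 1$ into a linear factor $a + bs$ (up to sign) and extracts the appropriate leading behavior of the numerator factors $\chi_{M_\cF}(q)$ as $q \to 1$---yields a rational expression in $\Q(s)$ presented as a sum indexed by chains in $\cL_{>\cl(\emptyset)}$.

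The remaining task is to match this expression chain-by-chain with van der Veer's definition of $Z_\cL(s)$ from \cite[Definition 1]{vanderVeer}, which is likewise a sum over flags in $\cL$ whose summands are assembled from ranks and M\"obius data on the intervals of the flag. The main obstacle I anticipate is precisely this final bookkeeping: the specialization of $\chi_{M_\cF}(q)\, q^{-\rk M - \wt_M(\cdot)}$ must be reorganized so as to match, coefficient for coefficient, the rank and M\"obius invariants appearing in van der Veer's summands. This reduces the theorem to a purely combinatorial identity, one chain at a time, between the numbers produced by $\mu_\topo$ and those prescribed in \cite{vanderVeer}, which can then be verified by expanding both sides using the definitions of $\chi$ and of the characteristic polynomial in terms of M\"obius values.
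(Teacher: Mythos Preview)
Your overall plan matches the paper's: start from \autoref{rationalformulaformotiviczetafunctionmaximalbuildingset}, apply $\mu_\topo$ to get
\[
	Z_M^\topo(s) = \sum_{\cF \in \cN(M)} \frac{\chi_{M_\cF}(q)}{(q-1)^{\#\cF}}\Big|_{q=1} \prod_{F \in \cF} \frac{1}{(\#F)s + \rk F},
\]
and compare with van der Veer's expression, which (via \cite[Proposition~1]{vanderVeer}) the paper writes as the same outer sum with coefficients $\sum_{\cG \supseteq \cF}(-1)^{\#\cG - \#\cF} H_M^\cG(1)$. So far so good.

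The gap is in your last paragraph. You describe the remaining step as a ``purely combinatorial identity, one chain at a time'' that ``can then be verified by expanding both sides using the definitions of $\chi$ and of the characteristic polynomial in terms of M\"obius values.'' This is where the actual content of the proof lives, and it is not a matter of unwinding definitions. After M\"obius inversion on the poset of flags, matching the two sides is equivalent to showing $P_M^\cF(1) = H_M^\cF(1)$ for every $\cF \in \cN(M)$, where $P_M^\cF$ is the Euler--Poincar\'e polynomial of \autoref{def:EP} and $H_M^\cF$ is a combinatorial polynomial built from $q$-analogues of rank jumps along flags refining $\cF$. The paper in fact proves the stronger polynomial identity $P_M^\cF(q) = H_M^\cF(q)$ (\autoref{PequalsHmaximalbuildingset}), and the core case $\overline{P}_M(q) = H_M(q)$ is exactly \autoref{thm:hilbertseries}, established by showing both sides satisfy the same recursion in matroid minors (\autoref{recursionforpoincarepolynomials} and \autoref{recursionforhilbertseries}). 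None of this falls out of a direct expansion in M\"obius values; you need either the recursive argument or an equivalent.

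In short: your reduction is correct, but the identity you defer to the end is the theorem. You should either supply the recursion-based argument that $\overline{P}_M = H_M$, or cite \autoref{thm:hilbertseries} and \autoref{PequalsHmaximalbuildingset} explicitly as the ingredient that closes the loop.
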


Our next result gives the first two coefficients in the Taylor expansion of $Z_M^\topo(s)$ at $s = 0$.  This gives affirmative answers to Questions~1 and~2 in \cite{vanderVeer}.

\begin{theorem}
\label{taylorcoefficientsoftopologicalzetafunction}
Let $M$ be a loopless matroid with ground set $E$. Then
\[
	Z_M^{\topo}(0) = 1,
\]
and
\[
	\left(\frac{d}{ds} Z^{\topo}_M(s)\right) \Big|_{s = 0} = - \# E.
\]
\end{theorem}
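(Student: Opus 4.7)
The plan is to apply the specialization $\mu_\topo$ of \autoref{Sec:Top} to the explicit rational formula for $Z_M(q,T)$ given in \autoref{rationalformulaformotiviczetafunctionmaximalbuildingset}, producing $Z_M^{\topo}(s)$ as a sum over flags $\cF \in \cN(M)$ of rational functions in $s$. Since the poles of these rational functions are controlled by the cones of the Bergman fan and $s = 0$ is not among them, the evaluation $Z_M^{\topo}(0)$ and the derivative at $s=0$ are both well-defined, so the question reduces to a concrete combinatorial computation.

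I would argue both assertions by induction on $\#E$ using the recurrence of \autoref{recurrencerelationmotiviczetafunctionat0}, which expresses $Z_M(q,T)$ in terms of zeta functions of proper minors of $M$. The base case is the empty matroid, where $Z_M(q,T) = 1$ and both assertions hold trivially. For the inductive step, applying $\mu_\topo$ converts the recurrence into a relation for $Z_M^{\topo}(s)$ in terms of the topological zeta functions of proper minors, which by the inductive hypothesis satisfy $Z_{M'}^{\topo}(0) = 1$ and have computable first derivatives at $0$.

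Specializing the resulting recurrence at $s=0$ should collapse to $1$ via a M\"obius-theoretic identity in the lattice of flats of $M$; concretely, one expects a statement comparing appropriately weighted Euler characteristics of $\cN(M)$ to $1$. Differentiating the specialized recurrence and evaluating at $0$ should then, together with the value $Z_M^{\topo}(0) = 1$, produce a sum in which each element $e \in E$ contributes $-1$ to the linear term in $s$, totaling $-\#E$.

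The main obstacle will be controlling how the limit implicit in $\mu_\topo$ interacts with the recurrence of \autoref{recurrencerelationmotiviczetafunctionat0}. The recurrence features characteristic polynomials and other expressions which degenerate at $q = 1$, so orders of vanishing must be tracked carefully to extract the correct finite contributions at $s=0$ and in its first derivative. Once this bookkeeping is in place, the two sought Taylor coefficients should follow cleanly from the combinatorial identities above, with the value $1$ being forced by a single Euler-characteristic identity and the derivative $-\#E$ arising by distributing the linear-in-$s$ correction over the elements of the ground set.
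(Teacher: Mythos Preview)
Your proposal is correct and follows essentially the same approach as the paper: specialize the recurrence of \autoref{recurrencerelationmotiviczetafunctionat0} via $\mu_\topo$ to obtain a recurrence for $Z_M^\topo(s)$ (\autoref{recurrencerelationtopologicalzetafunction}), then induct---the paper inducts on $\rk M$ rather than $\#E$, but either works since the recurrence involves $Z_{M|F}^\topo$ for $F \in \widehat\cL$. The two M\"obius-theoretic identities you anticipate are precisely \autoref{redcharsub} (giving $Z_M^\topo(0)=1$) and \autoref{atomsum} (giving the derivative), and your concern about orders of vanishing at $q=1$ is dispatched by \autoref{topologicalzetafunctionisspecializationoflocalmotiviczetafunction}, which shows $Z_M^\topo(s)=\mu_\topo(Z_M^0(q,T))$ so that the recurrence specializes cleanly with no delicate limits.
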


The key ingredient in the proof of \autoref{taylorcoefficientsoftopologicalzetafunction} is a recurrence relation for the local motivic zeta function in terms of matroid minors.

\begin{theorem}
\label{recurrencerelationmotiviczetafunctionat0}
Let $M$ be a matroid with ground set $E$ and lattice of flats $\cL$. The local motivic zeta function of $M$ satisfies the recurrence relation
\[
	q^{\rk M}Z_{M}^0(q,T) = \frac{(q-1)q^{-\rk M} T^{\# E}}{1 - q^{-\rk M}T^{\# E}} \left( \barchi_M(q) + \sum_{F \in \widehat\cL} \barchi_{M/F}(q) q^{\rk(M | F)} Z_{M|F}^0 (q, T) \right),
\]
where $\widehat\cL = \cL \setminus \{\cl(\emptyset), E\}$.
\end{theorem}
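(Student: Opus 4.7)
The plan is to stratify the defining sum for $Z_M^0(q,T)$ by the minimum coordinate of the weight. For each $w \in \Z_{>0}^E$, set $k := \min_i w_i \geq 1$ and $v := w - k\bOne \in \Z_{\geq 0}^E$, so that $v$ has at least one zero coordinate. Since the initial matroid is invariant under translation by multiples of $\bOne$, we have $M_w = M_v$; moreover $\wt_M(w) = k\,\rk M + \wt_M(v)$ and $|w| = k\,\#E + |v|$. Pulling out the $k$-dependence turns the sum over $k \geq 1$ into a geometric series, producing the factor $\frac{q^{-\rk M} T^{\#E}}{1 - q^{-\rk M} T^{\#E}}$ appearing on the right-hand side of the recurrence.

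Next, I would organize the remaining sum over $v$ by its zero set $F := \{i \in E : v_i = 0\}$. Assuming $M$ is loopless (otherwise both sides vanish), the key combinatorial observation is that $M_v$ is loopless if and only if $F$ is a flat of $M$: an element $i \in E \setminus F$ appears in some $v$-minimal basis of $M$ precisely when $i \notin \cl_M(F)$, so looplessness forces $\cl_M(F) \subseteq F$. Since $\chi_{M_v}(q) = 0$ whenever $M_v$ has a loop, only the case $F = E$ (that is, $v = 0$, producing the $\barchi_M(q)$ summand) and the cases $F \in \widehat\cL$ contribute nonzero terms.

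For each $F \in \widehat\cL$, the crucial structural input is a direct-sum decomposition of initial matroids
\[
	M_v \;\cong\; (M|F) \oplus (M/F)_{v|_{E \setminus F}},
\]
which holds because, since $F$ is a flat, every $v$-minimal basis of $M$ splits as the disjoint union of a basis of $M|F$ (whose elements all carry weight zero) and a $v|_{E\setminus F}$-minimal basis of the contraction $M/F$, with the two pieces chosen independently. Consequently $\chi_{M_v}(q) = \chi_{M|F}(q) \cdot \chi_{(M/F)_{v|_{E\setminus F}}}(q)$, $\wt_M(v) = \wt_{M/F}(v|_{E\setminus F})$, and $|v| = \sum_{i \in E\setminus F} v_i$. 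Substituting these factorizations and recognizing the inner sum over $v|_{E\setminus F} \in \Z_{>0}^{E\setminus F}$ as a power-of-$q$ multiple of the local motivic zeta function of the relevant minor produces the sum over $F$; multiplying through by $q^{\rk M}$ and using $\chi = (q-1)\barchi$ then yields the stated recurrence.

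The main obstacle is a clean verification of the direct-sum decomposition of $M_v$: one must use the flat condition $\cl_M(F) = F$ to rule out ``mixed'' $v$-minimal bases that would straddle $F$ and its complement, and to check that $\wt_M(v)$ and $|v|$ both split cleanly along this decomposition. Once this is in place, the remaining work is bookkeeping with $q$-powers, geometric-series summation, and the routine passage from $\chi$ to $\barchi$.
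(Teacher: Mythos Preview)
Your strategy—peeling off the minimum coordinate as a geometric series, then stratifying the residual $v$ by a distinguished subset of $E$—is sound, but you have the roles of zero set and support reversed. With the paper's convention $\wt_M(w)=\max_B\sum_{e\in B}w_e$, the bases of $M_v$ are $v$-\emph{maximal}, not $v$-minimal, and your stated biconditional and decomposition are both false. Take $M=U_{2,3}$ and $v=(1,0,0)$: the zero set $F=\{2,3\}$ is not a flat, yet $M_v$ (with bases $\{1,2\}$ and $\{1,3\}$) is loopless; your proposed $(M|F)\oplus(M/F)_{v|_{E\setminus F}}$ has $1$ as a loop (since $M/\{2,3\}$ has rank $0$), so is not $M_v$; and $\wt_M(v)=1\neq 0=\wt_{M/F}(v|_{\{1\}})$. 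As written, your argument would produce $\barchi_{M|F}$ paired with $Z^0_{M/F}$, which is not the recurrence in the theorem.

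The correct move is to stratify by the \emph{support} $S=\{i:v_i>0\}$. One then has $M_v=(M/S)\oplus(M|S)_{v|_S}$, $\wt_M(v)=\wt_{M|S}(v|_S)$, and $|v|=\sum_{i\in S}v_i$; since the loops of $M/S$ are exactly $\cl_M(S)\setminus S$, only $S=\emptyset$ (contributing $\chi_M(q)$) and $S\in\widehat\cL$ survive, and for the latter the inner sum over $v|_S\in\Z_{>0}^S$ is precisely $q^{\rk(M|S)}Z^0_{M|S}(q,T)$. With this fix your argument goes through and is slightly more self-contained than the paper's, which does not work from the lattice-point sum at all: it starts from the rational formula of \autoref{rationalformulaformotiviczetafunctionmaximalbuildingset}, sorts flags $\cF\in\cN^*(M)$ by $z_\cF(E)$, and uses \autoref{initialmatroiddecompositionmatroidminorsmaximalbuildingset} to factor $\chi_{M_\cF}(q)=\chi_{M/z_\cF(E)}(q)\cdot\chi_{(M|z_\cF(E))_{\cF\setminus\{E\}}}(q)$.
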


\begin{acknowledgements}
We thank Christin Bibby and Graham Denham for helpful conversations.  The first author was supported by NSF DMS-1601896.  The third author was supported by NSF DMS-1702428 and an NSF graduate research fellowship.
\end{acknowledgements}

\section{Preliminaries}
\label{preliminaries}

In this section we recall basic notions, set notation, and establish some elementary lemmas. Throughout this section, let $M$ be a matroid with ground set $E$ and lattice of flats $\cL$. Let $\rk_M \colon 2^E \to \Z_{\geq 0}$ denote the rank function of $M$, and let $\mu_M \colon \cL \times \cL \to \Z$ be the M\"obius function of the lattice $\cL$. When the context is clear, we write $\rk$ and $\mu$ instead of $\rk_M$ and $\mu_M$, respectively. We write $\rk M$ for $\rk_M E$.

We shall use standard notation for lattices when working with $\cL$. In particular, if $F$ and $G$ are flats of $M$, then their \textbf{meet} $F \wedge G$ is the flat $F \cap G$ and their \textbf{join} $F \vee G$ is the flat $\cl(F \cup G)$. The maximal element of $\cL$ is $E$ and the minimal element is $\cl(\emptyset)$. We will primarily be concerned with matroids that have no loops, in which case $\cl(\emptyset) = \emptyset$. We let $\widehat{\cL} = \cL \smallsetminus \{E, \cl(\emptyset)\}$ denote the interior of the lattice $\cL$. If $M$ is a loopless matroid, then $\widehat{\cL}$ is the poset consisting of the non-empty proper flats of $M$.

For flats $F_1 \subseteq F_2$, we recall that the interval $[F_1, F_2] = \{F \in \cL \mid  F_1 \subseteq F \subseteq F_2\}$ is isomorphic, via $F \mapsto F\setminus F_1$, to the lattice of flats of the matroid minor $M|F_2/F_1$. By slight abuse of notation, we shall identify flats in the interval $[F_1, F_2]$ with flats in $M| F_2/F_1$. If $\cS \subseteq \cL$ and $F \in \cL$, we will let $\cS_{\leq F}$ denote the set $\{ G \in \cS \mid G \subseteq F \}$. We will use the notation $\cS_{\geq F}$, $\cS_{< F}$, and $\cS_{> F}$ analogously.

\subsection{The characteristic polynomial of a matroid}

Recall that the \textbf{characteristic polynomial} of $M$ is defined as
\[
	\chi_M(q) = \sum_{S \subseteq E} (-1)^{\# S} q^{\rk M - \rk S} \in \Z[q].
\]
If $M$ contains a loop, then $\chi_M(q) = 0$. Otherwise, $\chi_M(q)$ may be written in terms of the M\"{o}bius function $\mu$ as
\[
	\chi_M(q) = \sum_{F \in \cL} \mu(\cl(\emptyset), F) q^{\rk M - \rk F}.
\]
The \textbf{reduced characteristic polynomial} of $M$ is defined as
\[
	\barchi_M(q) = \frac{\chi_M(q)}{q-1} = \sum_{S \subseteq E} (-1)^{\# S} [\rk M - \rk S]_q \in \Z[q],
\]
where for any $n \in \Z_{\geq 0}$, the polynomial $[n]_q \in \Z[q]$ is the \textbf{$q$-analogue} of the integer $n$,
\[
	[n]_q = \frac{q^n-1}{q-1} = 1+q+\cdots+q^{n-1} \in \Z[q].
\]

\begin{remark}
If $\cA$ is a hyperplane arrangement realizing $M$, then $\chi_M(\bL) \in K_0(\Var_\C)$ is equal to the class of the complement of $\cA$, and $\barchi_M(\bL) \in K_0(\Var_\C)$ is equal to the class of the complement of the projectivization of $\cA$. For any $n \in \Z_{\geq 1}$, the class of $\bP^{n-1}$ in $K_0(\Var_\C)$ is equal to $[n]_\bL$.
\end{remark}

The following identity is a consequence of M\"obius inversion.

\begin{proposition} \label{mobius}
For any two flats $F_1 \subseteq F_2$ in $M$,
\[
	[\rk(F_2) - \rk(F_1)]_q = \sum_{F_1 \subseteq F \subsetneq F_2} \barchi_{M|F_2/F}(q).
\]
\end{proposition}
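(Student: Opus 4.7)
The plan is to first reduce the identity to a universal one by replacing $M$ with the minor $N := M|F_2/F_1$. Under the standard isomorphism $[F_1,F_2] \xrightarrow{\sim} \cL(N)$, $F \mapsto F/F_1$, a flat $F$ with $F_1 \subseteq F \subsetneq F_2$ corresponds to a non-top flat $F' \in \cL(N)$, and $M|F_2/F$ is naturally identified with $N/F'$. Since $\rk(F_2)-\rk(F_1) = \rk N$, the claim becomes
\[
	[\rk N]_q \;=\; \sum_{\substack{F' \in \cL(N) \\ F' \neq E(N)}} \barchi_{N/F'}(q).
\]
Multiplying through by $q-1$ and noting that the omitted top-flat term would contribute $\chi_{N/E(N)}(q) = 1$ (the characteristic polynomial of the rank-$0$ matroid on the empty set), this is equivalent to the clean identity
\[
	q^{\rk N} \;=\; \sum_{F' \in \cL(N)} \chi_{N/F'}(q). \tag{$\ast$}
\]

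To establish $(\ast)$, I would use M\"obius inversion on $\cL(N)$. Under the interval isomorphism $[F', E(N)] \cong \cL(N/F')$, one has $\mu_{N/F'}(\cl(\emptyset), G/F') = \mu_N(F', G)$ and $\rk_{N/F'}(G/F') = \rk G - \rk F'$, so the formula recalled just before \autoref{mobius} gives
\[
	\chi_{N/F'}(q) \;=\; \sum_{G \geq F'} \mu_N(F', G)\, q^{\rk N - \rk G}.
\]
Summing over $F' \in \cL(N)$ and exchanging the order of summation,
\[
	\sum_{F' \in \cL(N)} \chi_{N/F'}(q) \;=\; \sum_{G \in \cL(N)} q^{\rk N - \rk G} \sum_{F' \leq G} \mu_N(F', G).
\]
The inner sum equals $1$ if $G = \cl(\emptyset)$ and $0$ otherwise by the defining property of the M\"obius function; since $\rk \cl(\emptyset) = 0$, only the term $q^{\rk N}$ survives, proving $(\ast)$.

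There is no serious obstacle here: the argument is pure M\"obius inversion once one identifies the correct minor. The only point requiring mild care is the bookkeeping when $N$ has loops, but this does not interfere with the calculation because $\rk \cl(\emptyset) = 0$ irrespective of loops, and the M\"obius identity on the lattice $\cL(N)$ holds unconditionally.
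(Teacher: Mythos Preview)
Your argument is correct and is essentially the paper's own proof: both establish the identity $q^{\rk(F_2)-\rk(F_1)} = \sum_{F_1 \subseteq F \subseteq F_2} \chi_{M|F_2/F}(q)$ via M\"obius inversion on the interval $[F_1,F_2]$ and then subtract the top term and divide by $q-1$; the only differences are cosmetic (you pass explicitly to the minor $N$ and unwind the double sum by hand, while the paper works in the interval and invokes M\"obius inversion as a black box). Incidentally, your caveat about loops is moot, since $N = M|F_2/F_1$ is automatically loopless whenever $F_1$ is a flat.
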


\begin{proof}
For any flats $F_1 \subseteq F_2$ in $M$, the matroid minor $M|F_2/F_1$ is loopless, so
\[
	\chi_{M|F_2/F_1}(q) = \sum_{F_1 \subseteq F \subseteq F_2} \mu(F_1,F) q^{\rk(F_2) - \rk(F)}.
\]
By M\"obius inversion, we have
\[
	q^{\rk(F_2) - \rk(F_1)} = \sum_{F_1 \subseteq F \subseteq F_2} \chi_{M|F_2/F}(q).
\]
To obtain the desired formula, subtract $\chi_{M|F_2/F_2}(q) = 1$ from both sides and divide by $(q-1)$.
\end{proof}

The following special cases of \autoref{mobius} will be of use to us.

\begin{corollary} \label{redcharsub}
Suppose that $M$ is loopless. Then
\[
	\barchi_M(q) = [\rk(M)]_q - \sum_{F \in \widehat{\cL}} \barchi_{M/F}(q).
\]
\end{corollary}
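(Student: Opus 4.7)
The plan is to derive this directly from \autoref{mobius} by specializing the interval. Since $M$ is loopless, $\cl(\emptyset) = \emptyset$, so I would apply \autoref{mobius} to the flats $F_1 = \emptyset$ and $F_2 = E$. The left hand side becomes $[\rk(E) - \rk(\emptyset)]_q = [\rk M]_q$, and the right hand side becomes a sum of $\barchi_{M|E/F}(q) = \barchi_{M/F}(q)$ over all flats $F \in \cL$ with $\emptyset \subseteq F \subsetneq E$.

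The remaining step is to isolate the term contributed by $F = \emptyset$. Since $M/\emptyset = M$, this term is precisely $\barchi_M(q)$, and the flats $F$ with $\emptyset \subsetneq F \subsetneq E$ are exactly the elements of $\widehat{\cL}$. Splitting the sum gives
\[
	[\rk M]_q = \barchi_M(q) + \sum_{F \in \widehat{\cL}} \barchi_{M/F}(q),
\]
and rearranging yields the corollary.

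There is no real obstacle here; the statement is essentially a rebranding of \autoref{mobius} after fixing $F_1 = \emptyset$, $F_2 = E$ and separating the $F = \emptyset$ summand. The only point requiring care is that the step requires $M$ to be loopless, which is used precisely to identify $\cl(\emptyset)$ with $\emptyset$ and thereby ensure that the indexing set $\{\emptyset \subseteq F \subsetneq E\}$ decomposes cleanly as $\{\emptyset\} \sqcup \widehat{\cL}$.
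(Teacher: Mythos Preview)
Your proof is correct and is exactly the paper's approach: the paper's proof is the single sentence ``This is \autoref{mobius} with $F_1 = \emptyset$ and $F_2 = E$,'' and you have simply spelled out the details of that specialization.
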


\begin{proof}
This is \autoref{mobius} with $F_1 = \emptyset$ and $F_2 = E$.
\end{proof}

\begin{corollary} \label{atomsum}
Suppose that $M$ is loopless. Then
\[
	\sum_{F \in \widehat{\cL}} (\#F)\, \barchi_{M/F}(q) = (\#E)[\rk(M) - 1]_q.
\]
\end{corollary}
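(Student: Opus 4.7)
The plan is to prove the identity by a double-counting argument that reduces it to a single application of \autoref{mobius}.

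First I would rewrite the factor $\#F$ as the number of elements of $E$ contained in $F$, and interchange the order of summation:
\[
    \sum_{F \in \widehat{\cL}} (\#F)\, \barchi_{M/F}(q) = \sum_{F \in \widehat{\cL}} \sum_{e \in F} \barchi_{M/F}(q) = \sum_{e \in E} \sum_{\substack{F \in \widehat{\cL} \\ e \in F}} \barchi_{M/F}(q).
\]
So it suffices to show that for each fixed $e \in E$, the inner sum equals $[\rk(M) - 1]_q$.

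Next I would fix $e \in E$ and let $L_e = \cl(\{e\})$, which is a rank-$1$ flat because $M$ is loopless. The key observation is that a flat $F$ of $M$ contains $e$ if and only if $L_e \subseteq F$. Therefore the flats $F \in \widehat{\cL}$ containing $e$ are exactly the flats in $[L_e, E) := \{F \in \cL \mid L_e \subseteq F \subsetneq E\}$ (note $F$ is automatically non-empty since $L_e$ is). Applying \autoref{mobius} with $F_1 = L_e$ and $F_2 = E$ yields
\[
    \sum_{\substack{F \in \widehat{\cL} \\ e \in F}} \barchi_{M/F}(q) = \sum_{L_e \subseteq F \subsetneq E} \barchi_{M/F}(q) = [\rk(E) - \rk(L_e)]_q = [\rk(M) - 1]_q,
\]
and summing over all $e \in E$ gives the desired identity.

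There is no real obstacle here; the only thing to check is the degenerate case $\rk(M) = 1$, in which $\widehat{\cL} = \emptyset$ so the left-hand side is $0$, while the right-hand side is $(\#E)[0]_q = 0$, so the identity holds trivially. In that case the inner sum above is also empty because $L_e = E$ for every $e$, so the argument remains consistent.
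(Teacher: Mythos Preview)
Your proof is correct and essentially identical to the paper's: both rewrite $\#F$ as $\sum_{e\in F}1$, swap the order of summation, and then apply \autoref{mobius} with $F_1=\cl(e)$ and $F_2=E$ to evaluate the inner sum as $[\rk(M)-1]_q$. Your extra remark on the degenerate $\rk(M)=1$ case is a harmless sanity check that the paper omits.
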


\begin{proof}
Let $e \in E$. Because $M$ is loopless, the flat $\cl(e)$ has rank $1$. Since a flat $F$ contains $e$ if and only if $\cl(e) \subseteq F$, we may apply \autoref{mobius} with $F_1 = \cl(e)$, $F_2 = E$ to see that
\[
	\sum_{e \in F \subsetneq E} \barchi_{M/F}(q) = [\rk(M) - 1]_q.
\]
Therefore,
\begin{align*}
\sum_{F \in \widehat{\cL}} (\# F) \, \barchi_{M/F}(q)
	&= \sum_{F \in \widehat{\cL}} \,\sum_{e \in F} \barchi_{M/F}(q) \\
	&= \sum_{e \in E} \, \sum_{e \in F \subsetneq E} \barchi_{M/F}(q) \\
	&= (\# E) [\rk(M) - 1]_q.
\end{align*}
\end{proof}

\subsection{The Bergman fan of a matroid}
\label{BergmanFan}

\begin{definition}
\label{defWeight}
For $w = (w_e)_{e \in E} \in \R^E$, set
\[
\wt_M(w) = \max_{\text{$B$ basis of $M$}} \sum_{e \in B} w_e.
\]
Let $M_w$ denote the matroid with ground set $E$ whose bases are the bases $B$ of $M$ satisfying $\sum_{e \in B} w_e = \wt_M(w)$. Note that $M_w = M_{w + \lambda \bOne}$ for any $\lambda \in \R$.

For $v \in \R^E/\R\bOne$, set
\[
\overline{\wt}_M(v) = \wt_M(w) - (\rk M) \min(w),
\]
where $w \in \R^E$ is any lift of $v$. Let $M_v$ be the matroid $M_w$ for any lift $w \in \R^E$ of $v$.
\end{definition}

\begin{definition}
\label{defBergmanFan}
The \textbf{Bergman fan} of $M$ is
\[
	\Trop(M) = \{ w \in \R^E \, | \, \text{$M_w$ is loopless}\} \subseteq \R^E.
\]
\end{definition}

We now describe two unimodular fans determined by a matroid $M$ on $E$. For any $S \subseteq E$, let $v_S \in \R^E$ denote the indicator vector of $S$.  That is, the $e$th coordinate of $v_S$ is 1 when $e \in S$ and 0 when $e \in E \setminus S$.  For any $\cS \subseteq 2^E$, let $\sigma_{\cS} \subseteq \R^E$ denote the cone
\[
	\sigma_{\cS} = \sum_{S \in \cS} \R_{\geq 0} v_S.
\]
Let $\Sigma(M)$ be the fan
\[
	\Sigma(M) = \{\sigma_\cF \, | \, \cF \in \cN(M)\},
\]
and let $\Sigma^\circ(M)$ be
\[
	\Sigma^\circ(M) = \{\sigma_\cF \, | \, \cF \in \cN^\circ(M)\}.
\]

It is straightforward to check that $\Sigma(M)$ and $\Sigma^\circ(M)$ are unimodular fans. The supports of $\Sigma(M)$ and $\Sigma^\circ(M)$ were given by Ardila and Klivans.

\begin{theorem}\cite[Theorem~1]{ArdilaKlivans}
Suppose that $M$ is loopless.  Then the support of $\Sigma(M)$ is
\[
	\bigcup_{\cF \in \cN(M)} \sigma_\cF = \Trop(M) \cap \R_{\geq 0}^E,
\]
and the support of $\Sigma^\circ(M)$ is
\[
	\bigcup_{\cF \in \cN^\circ(M)} \sigma_\cF = \Trop(M) \cap \partial\R_{\geq 0}^E,
\]
where $\partial \R_{\geq 0}^E = \R_{\geq 0}^E \setminus \R_{>0}^E$ is the boundary of the positive orthant of $\R^E$.
\end{theorem}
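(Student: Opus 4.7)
The plan is to prove the two support identities by a direct argument, establishing both inclusions and deducing the $\Sigma^\circ(M)$ statement as a refinement that tracks when the top flat is $E$. The key technical input is a circuit-theoretic characterization of when $M_w$ is loopless, which is then used to show that the super-level sets of any $w \in \Trop(M)$ are flats.

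First I would verify the inclusion $\bigcup_{\cF \in \cN(M)} \sigma_\cF \subseteq \Trop(M) \cap \R_{\geq 0}^E$. Take a flag $\cF = (F_1 \subsetneq \cdots \subsetneq F_k)$ and $w = \sum_{i=1}^k c_i v_{F_i}$ with $c_i \geq 0$. Clearly $w \in \R_{\geq 0}^E$. For any basis $B$ of $M$, the $w$-weight is $\sum_i c_i |B \cap F_i|$, and since $|B \cap F_i| \leq \rk(F_i)$ with equality iff $B \cap F_i$ is a basis of $M|F_i$, the maximum weight is $\sum_i c_i \rk(F_i)$, attained exactly by those bases $B$ with $B \cap F_i$ a basis of $M|F_i$ for every $i$. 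Standard matroid theory (iterative extension through the flag of minors $M|F_1, M|F_2/F_1, \ldots, M/F_k$) shows that every $e \in E$ lies in some such basis: the point is that $F_i$ is a flat, so no element of $E \setminus F_i$ is a loop in $M/F_i$. Hence $M_w$ is loopless and $w \in \Trop(M)$.

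For the reverse inclusion, take $w \in \Trop(M) \cap \R_{\geq 0}^E$ and let $\beta_1 > \beta_2 > \cdots > \beta_m \geq 0$ be its distinct coordinate values, setting $F_i = \{e \in E : w_e \geq \beta_i\}$. Then $F_1 \subsetneq \cdots \subsetneq F_m = E$ and a telescoping computation gives $w = \beta_m v_{F_m} + \sum_{i=1}^{m-1} (\beta_i - \beta_{i+1}) v_{F_i}$ with non-negative coefficients. The crucial lemma is that each $F_i$ is a flat of $M$. To see this, I would first establish that $M_w$ is loopless if and only if, for every circuit $C$ of $M$, the minimum of $w$ on $C$ is not uniquely attained — this follows from the usual circuit exchange: if $e \notin B$ is in a $w$-maximal basis $B \cup \{e\} \setminus \{f\}$, then $w_e \leq w_f$ on the fundamental circuit $C \subseteq B \cup \{e\}$, and strict inequality for all $f \in C \setminus \{e\}$ characterizes $e$ being a loop in $M_w$. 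Now if some $F_i$ fails to be a flat, pick $e \in \cl(F_i) \setminus F_i$ and a circuit $C \ni e$ with $C \setminus \{e\} \subseteq F_i$; then $w_f \geq \beta_i > w_e$ for all $f \in C \setminus \{e\}$, giving a unique minimum on $C$ and contradicting $w \in \Trop(M)$. Since $M$ is loopless, $F_1 \neq \emptyset = \cl(\emptyset)$, so $\cF = (F_1 \subsetneq \cdots \subsetneq F_m) \in \cN(M)$ and $w \in \sigma_\cF$.

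The statement for $\Sigma^\circ(M)$ falls out of the same construction. A flag lies in $\cN^\circ(M)$ exactly when $E$ is excluded, and $\sigma_\cF$ is contained in $\partial \R_{\geq 0}^E$ precisely for such flags, since $v_E = \bOne$ is the only generator with all positive coordinates. In the reverse direction, $w \in \partial \R_{\geq 0}^E$ forces $\beta_m = 0$, so the coefficient of $v_{F_m} = v_E$ in the decomposition above vanishes, and $w$ lies in $\sigma_{\cF'}$ for $\cF' = (F_1 \subsetneq \cdots \subsetneq F_{m-1}) \in \cN^\circ(M)$. I expect the main obstacle to be the circuit-theoretic lemma characterizing looplessness of $M_w$; once that is in hand, both directions and the boundary refinement are straightforward bookkeeping.
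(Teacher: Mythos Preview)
The paper does not prove this statement; it is cited from \cite{ArdilaKlivans} and used as input. Your proposal is the standard Ardila--Klivans argument and is correct in outline: the super-level sets of any $w \in \Trop(M) \cap \R_{\geq 0}^E$ are flats, giving the flag decomposition, and the boundary refinement simply tracks whether $\min(w) = 0$.

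One point to sharpen: the direction of the circuit lemma you actually need for the reverse inclusion is the harder one, namely that a circuit $C$ with a unique minimum at $e$ forces $e$ to be a loop of $M_w$. Your sketch (via the fundamental circuit of $e \notin B$ in a $w$-maximal $B$) establishes only the converse. For the needed direction, suppose $e$ lies in a $w$-maximal basis $B$; the fundamental cocircuit $D = E \setminus \cl(B \setminus \{e\})$ meets $C$ in at least two elements, so there is $g \in (C \cap D) \setminus \{e\} \subseteq E \setminus B$ with $e$ in the fundamental circuit of $g$ in $B$, whence $B - e + g$ is a basis of strictly larger $w$-weight, a contradiction.
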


For each flag $\cF \in \cN(M)$, the function $w \mapsto M_w$ is constant on $\relint(\sigma_\cF)$ \cite[Proposition 1]{ArdilaKlivans}.  We therefore may make the following definition.

\begin{definition}
\label{defMF}
For $\cF \in \cN(M)$, we let $M_\cF$ denote the matroid $M_w$ for any $w \in \relint(\sigma_\cF)$.
\end{definition}

We will frequently use the fact that, for any $\cF \in \cN^\circ(M)$, we have an equality of matroids $M_\cF = M_{\cF \cup \{E\}}$.  Throughout, we let $z_\cF(F) = \max (\cF_{< F})$.  We state a useful result about matroids of the form $M_\cF$.

\begin{proposition}\cite[Propositions~1 and~2]{ArdilaKlivans}
\label{initialmatroiddecompositionmatroidminorsmaximalbuildingset}
Suppose that $M$ is loopless, and let $\cF \in \cN^*(M)$.  Then
\[
	M_\cF = \bigoplus_{F \in \cF} M|F/z_\cF(F).
\]
\end{proposition}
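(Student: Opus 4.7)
The plan is to work directly from the weight interpretation in \autoref{defWeight}, reducing the identification of $M_\cF$ to a computation in which bases maximizing a linear functional are shown to be precisely the bases of a direct sum. Write $\cF = \{F_1 \subsetneq F_2 \subsetneq \cdots \subsetneq F_k\}$ with $F_k = E$, and adopt the convention $F_0 = \emptyset = \cl(\emptyset)$ (so the claimed decomposition reads $M|F_1 \oplus M|F_2/F_1 \oplus \cdots \oplus M|E/F_{k-1}$, consistent with $z_\cF(F_1) = \cl(\emptyset)$). Pick any $w \in \relint(\sigma_\cF)$ and write $w = \sum_{i=1}^k c_i v_{F_i}$ with each $c_i > 0$.

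For a basis $B$ of $M$, expand
\[
\sum_{e \in B} w_e = \sum_{i=1}^k c_i\,|B \cap F_i|.
\]
Since $B \cap F_i$ is independent in $M|F_i$, we have $|B \cap F_i| \leq \rk F_i$. The first key step is to show that equality can be achieved simultaneously for all $i$: starting from a basis of $M|F_1$ and using the augmentation axiom step by step along the flag, one produces a basis $B$ of $M$ with $B \cap F_i$ a basis of $M|F_i$ for every $i$. Combined with positivity of the $c_i$, this yields $\wt_M(w) = \sum_i c_i \rk F_i$ and characterizes the bases of $M_w$ as exactly those $B$ with $|B \cap F_i| = \rk F_i$ for every $i$.

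The second key step is to identify this set of bases with the bases of $N := \bigoplus_{i=1}^k M|F_i/F_{i-1}$. Given such $B$, set $B_i := B \cap (F_i \setminus F_{i-1})$; then $|B_i| = \rk F_i - \rk F_{i-1} = \rk(M|F_i/F_{i-1})$, and $B_i$ is independent in $M|F_i/F_{i-1}$ because $B_i \cup (B \cap F_{i-1})$ is independent in $M|F_i$ (being a subset of the independent set $B$). Hence each $B_i$ is a basis of the corresponding minor and $B = \bigsqcup_i B_i$ is a basis of $N$. Conversely, given bases $B_i$ of each minor, an inductive argument on $j$ shows that $\bigsqcup_{i \leq j} B_i$ is a basis of $M|F_j$: the cardinality is correct by telescoping, and independence in $M|F_j$ follows from the inductive hypothesis together with the definition of independence in the contraction $M|F_j/F_{j-1}$. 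Taking $j=k$ recovers a basis of $M$ satisfying $|B \cap F_i| = \rk F_i$ for all $i$, completing the matching of bases.

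Since matroids with the same ground set and the same collection of bases are equal, this gives $M_\cF = \bigoplus_{F \in \cF} M|F/z_\cF(F)$, and the conclusion does not depend on the particular $w \in \relint(\sigma_\cF)$ chosen, in agreement with \autoref{defMF}. The main obstacle I anticipate is bookkeeping rather than conceptual depth: one must be careful that the ground sets of the minors partition $E$ (which uses $F_1 \supseteq \cl(\emptyset) = \emptyset$ by looplessness of $M$ and $F_k = E$ from $\cF \in \cN^*(M)$), and that the augmentation step really does produce a single basis simultaneously realizing $|B \cap F_i| = \rk F_i$ across the full flag, rather than only for one flat at a time.
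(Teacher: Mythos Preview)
Your argument is correct and complete. The paper does not actually supply a proof of this proposition: it is quoted from Ardila--Klivans \cite[Propositions~1 and~2]{ArdilaKlivans} and used as a black box. Your proposal, by contrast, gives a self-contained derivation from the definition of $M_w$ via the greedy/weight characterization of bases, and each step checks out: the simultaneous attainment of $|B \cap F_i| = \rk F_i$ via iterated augmentation along the flag, the identification of such bases with bases of $\bigoplus_i M|F_i/F_{i-1}$ by splitting $B$ into the pieces $B_i = B \cap (F_i \setminus F_{i-1})$, and the converse direction by induction on $j$ all go through exactly as you describe. The bookkeeping concerns you flag (that $F_0 = \cl(\emptyset) = \emptyset$ requires looplessness, and that $F_k = E$ requires $\cF \in \cN^*(M)$) are the only places where the hypotheses enter, and you have identified them correctly. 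In short, you have reconstructed the standard proof that the paper outsources to the cited reference.
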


In particular, \autoref{initialmatroiddecompositionmatroidminorsmaximalbuildingset} implies that for each $\cF \in \cN(M)$, the characteristic polynomial $\chi_{M_\cF}(q)$ is divisible by $(q-1)^{\# \cF}$.  Similarly, for each $\cF \in \cN^\circ(M)$ we have $M_{\cF} = M_{\cF \cup \{ E \}}$, and so the reduced characteristic polynomial $\barchi_{M_\cF}(q)$ is divisible by $(q-1)^{\# \cF}$.

\begin{remark}
In the statement of \autoref{initialmatroiddecompositionmatroidminorsmaximalbuildingset}, the ground set of the matroid minor $M|F/z_\cF(F)$ is identified in the standard way with $F \setminus  z_\cF(F) \subseteq E$.  This identification will be implicit in several of the statements below.
\end{remark}

We will also need the following refinement of \autoref{initialmatroiddecompositionmatroidminorsmaximalbuildingset}.

\begin{proposition}
\label{decomposeinitialdegenerationofrefinementmaximalbuildingset}
Suppose that $M$ is loopless, and let $\cF, \cG \in \cN^*(M)$ such that $\cF \subseteq \cG$. Then
\[
	M_\cG = \bigoplus_{F \in \cF} (M | F / z_{\cF}(F))_{\cG_F},
\]
where $\cG_F \in \cN^*(M | F /z_\cF(F))$ is the flag
\[
	\{ G \in \cG \mid z_\cF(F) \subsetneq G \subseteq F \}.
\]
\end{proposition}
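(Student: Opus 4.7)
The plan is to apply Proposition \ref{initialmatroiddecompositionmatroidminorsmaximalbuildingset} twice: once to $M$ with the refined flag $\cG$, and once to each summand $M|F/z_\cF(F)$ with the induced flag $\cG_F$. The key is then to match the summands of the two resulting direct-sum decompositions.

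First I would apply the previous proposition to $\cG$ to obtain
\[
M_\cG = \bigoplus_{G \in \cG} M|G/z_\cG(G).
\]
Next, observing that $\cG_F$ is a flag of flats of $M|F/z_\cF(F)$ containing the maximal element (namely $F$), I would apply the previous proposition inside each minor to get
\[
(M|F/z_\cF(F))_{\cG_F} = \bigoplus_{G \in \cG_F} \bigl( M|F/z_\cF(F)\bigr)\big| G / z_{\cG_F}(G).
\]
The identification of flats in the interval $[z_\cF(F),F]$ with flats of $M|F/z_\cF(F)$ gives $(M|F/z_\cF(F))|G = M|G/z_\cF(F)$. To finish, I would verify that the summands match term by term, i.e.\ that for every $G \in \cG_F$,
\[
\bigl(M|F/z_\cF(F)\bigr)\big| G / z_{\cG_F}(G) = M|G/z_\cG(G),
\]
and that the $\cG_F$ partition $\cG$. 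The latter is clear: since $\cF \subseteq \cG$, the half-open intervals $(z_\cF(F),F]$ for $F \in \cF$ partition $(\cl(\emptyset), E]$, and every $G \in \cG$ lies in exactly one of them.

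The main bookkeeping step, and the one I expect to be the principal obstacle, is tracking $z_{\cG_F}(G)$ through the minor identifications and checking it equals $z_\cG(G)$. I would split into two cases. If $G$ is the minimum of $\cG_F$, then $z_{\cG_F}(G)$ is the minimum flat of $M|F/z_\cF(F)$, which corresponds to $z_\cF(F)$ in $M$; on the other side, every element of $\cG$ strictly below $G$ lies in $\cG_{\leq z_\cF(F)}$, whose maximum is $z_\cF(F)$ itself since $z_\cF(F) \in \cF \subseteq \cG$, so $z_\cG(G) = z_\cF(F)$ as well. If $G$ is not the minimum of $\cG_F$, let $G^-$ be its predecessor in $\cG_F$; then $z_{\cG_F}(G) = G^-$, and also $z_\cG(G) = G^-$ because any element of $\cG_{<G}$ outside $\cG_F$ is contained in $z_\cF(F) \subsetneq G^-$. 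In both cases the summands coincide, and assembling the decompositions over $F \in \cF$ using the partition of $\cG$ yields the claimed equality.
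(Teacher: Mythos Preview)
Your proposal is correct and follows essentially the same approach as the paper: apply Proposition~\ref{initialmatroiddecompositionmatroidminorsmaximalbuildingset} to $M_\cG$, regroup the summands according to the partition $\cG = \bigsqcup_{F \in \cF} \cG_F$, and recognize each block as the decomposition of $(M|F/z_\cF(F))_{\cG_F}$. The paper compresses all of this into a single displayed chain of equalities, leaving implicit the verification that $z_{\cG_F}(G)$ corresponds to $z_\cG(G)$ under the minor identifications; your case analysis makes that step explicit, but there is no difference in strategy.
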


\begin{proof}
By \autoref{initialmatroiddecompositionmatroidminorsmaximalbuildingset},
\begin{align*}
	M_\cG = \bigoplus_{G \in \cG} M | G/ z_{\cG}(G)
	&= \bigoplus_{F \in \cF} \bigoplus_{G \in \cG_F} (M | F / z_\cF(F)) | G / z_{\cG_F}(G)\\
	&= \bigoplus_{F \in \cF} \ (M | F / z_{\cF}(F))_{\cG_F}.
\end{align*}
\end{proof}

\subsection{Wonderful models and the Bergman fan}

In the realizable case, the Bergman fan and $\Sigma(M)$ are closely related to the wonderful models of De Concini and Procesi introduced in \cite{DeConciniProcesi}.  Because we do not assume that our matroids are realizable, the ideas in this subsection will not be used directly in any of our proofs.  Nevertheless, \autoref{Prop:Poincare} suggests the definitions of Poincar\'{e} and Euler-Poincar\'{e} polynomials given in \autoref{def:Poincare} and \autoref{def:EP} below.

Suppose that $M$ is loopless.  The fan $\Sigma(M)$ defines a toric variety with dense torus $\bG_m^E$, and the image $\overline{\Sigma}(M)$ of $\Sigma(M)$ in $\R^E/\R v_E$ defines a toric variety with dense torus $\bG_m^E/\bG_m$, where the quotient is taken with respect to $\bG_m$ acting on $\bG_m^E$ by the diagonal action.

Now let $\cA$ be a hyperplane arrangement realizing $M$, and let $Y$ (resp. $\overline{Y}$) be the wonderful model of the complement of $\cA$ (resp. the complement of the projectivization of $\cA$), with respect to the maximal building set. Embedding by any choice of linear forms defining the hyperplanes in $\cA$, we obtain a linear subspace $V \subset \bA^E$ such that the hyperplanes in $\cA$ are the intersections of $V$ with the coordinate hyperplanes of $\bA^E$. The complement of $\cA$ is equal to $V \cap \bG_m^E$, where $\bG_m^E$ is naturally identified with the complement of the coordinate hyperplanes of $\bA^E$, and the complement of the projectivization of $\cA$ is equal to $\bP(V) \cap (\bG_m^E / \bG_m)$, where $(\bG_m^E / \bG_m)$ is naturally identified with the complement of the coordinate hyperplanes in $\bP(\bA^E)$. Then $Y$ is isomorphic to the closure of $V \cap \bG_m^E$ in the toric variety defined by $\Sigma(M)$, and $\overline{Y}$ is isomorphic to the closure of $\bP(V) \cap (\bG_m^E / \bG_m)$ in the toric variety defined by $\overline{\Sigma}(M)$. Under these identifications, $\overline{Y}$ has the structure of a tropical compactification, in the sense of Tevelev \cite{Tevelev}. Similarly, if we set
\[
	\widetilde{\Sigma}(M) = \Sigma(M) \cup \{ \sigma_\cF + \R_{\geq 0} (-v_E) \mid \cF \in \cN^\circ(M)\},
\]
then $Y$ is an open subset of a tropical compactification in the toric variety defined by $\widetilde{\Sigma}(M)$. In fact, $Y$ is the intersection of this tropical compactification with the torus-invariant open subvariety defined by $\Sigma(M)$.

Under these identifications, if $\cF \in \cN(M)$ (resp. $\cF \in \cN^\circ(M)$) and $Y_\cF$ (resp. $\overline{Y}_\cF$) is the corresponding locally closed stratum in $Y$ (resp. $\overline{Y}$), then $Y_\cF$ (resp. $\overline{Y}_\cF$) is equal to the intersection of $Y$ (resp. $\overline{Y}$) with the torus orbit corresponding to $\sigma_\cF \in \Sigma(M)$ (resp. the image of $\sigma_\cF$ in $\R^E/\R v_E)$. Thus by \cite[Lemma 3.6]{HelmKatz}, we have
\[
	Y_\cF \times \bG_m^{\# \cF} \cong \init_w(V \cap \bG_m^E), \qquad \overline{Y}_\cF \times \bG_m^{\# \cF} \cong \init_v(\bP(V) \cap (\bG_m^E / \bG_m)),
\]
where $\init_w$ (resp. $\init_v$) denotes taking initial degeneration with respect to any $w \in \relint (\sigma_\cF)$ (resp. any $v$ in the relative interior of the image of $\sigma_\cF$ in $\R^E/\R v_E$). Therefore because $[Y_\cF] \in K_0(\Var_\C)$ (resp. $[\overline{Y}_\cF] \in K_0(\Var_\C)$) is a polynomial in $\bL$, we obtain the following proposition computing the classes in $K_0(\Var_\C)$ of the locally closed strata of $Y$ and $\overline{Y}$.

\begin{proposition}
\label{Prop:Poincare}
If $\cF \in \cN(M)$ (resp. $\cF \in \cN^\circ(M))$, then
\[
	[Y_\cF] = \left(\frac{\chi_{M_\cF}(q)}{(q-1)^{\# \cF}} \right) \Big|_{q = \bL} \qquad \left( \text{resp. } [\overline{Y}_\cF] = \left(\frac{\overline{\chi}_{M_\cF}(q)}{(q-1)^{\# \cF}} \right )  \Big|_{q = \bL} \right).
\]
\end{proposition}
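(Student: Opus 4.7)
The plan is to deduce both identities directly from the isomorphisms
\[
Y_\cF \times \bG_m^{\# \cF} \cong \init_w(V \cap \bG_m^E), \qquad \overline{Y}_\cF \times \bG_m^{\# \cF} \cong \init_v(\bP(V) \cap (\bG_m^E / \bG_m))
\]
established in the preceding paragraph. The initial degeneration of a linear subspace with respect to a weight in $\relint(\sigma_\cF)$ is itself a linear subspace realizing the initial matroid $M_\cF$, and so its intersection with $\bG_m^E$ (respectively, the image of this intersection in $\bG_m^E/\bG_m$) is the complement of a realization (respectively, of its projectivization) of $M_\cF$. Invoking the standard identification of such classes with (reduced) characteristic polynomials evaluated at $\bL$ yields the identities
\[
[Y_\cF]\,(\bL-1)^{\# \cF} = \chi_{M_\cF}(\bL) \qquad \text{and} \qquad [\overline{Y}_\cF]\,(\bL-1)^{\# \cF} = \overline{\chi}_{M_\cF}(\bL).
\]

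To upgrade these into the claimed closed-form expressions, I would appeal to the direct sum decomposition $M_{\cF^*} = \bigoplus_{F \in \cF^*} M|F/z_{\cF^*}(F)$ from \autoref{initialmatroiddecompositionmatroidminorsmaximalbuildingset}, where $\cF^* = \cF$ if $\cF \in \cN^*(M)$ and $\cF^* = \cF \cup \{E\}$ if $\cF \in \cN^\circ(M)$. A realization of a direct sum of loopless matroids of positive rank splits as a product of realizations of the summands, and each summand carries its own scaling $\bG_m$-action. Geometrically, this exhibits the initial degeneration as $\bigl(\prod_{F \in \cF^*} \overline{U}_F\bigr) \times \bG_m^{\# \cF^*}$, where $\overline{U}_F$ denotes the complement of the projectivized arrangement realizing $M|F/z_{\cF^*}(F)$. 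Matching this with $Y_\cF \times \bG_m^{\# \cF}$ exhibits $Y_\cF$ itself as $\bigl(\prod_{F \in \cF^*} \overline{U}_F\bigr) \times \bG_m^{\# \cF^* - \# \cF}$, so that $[Y_\cF]$ is visibly a polynomial in $\bL$, namely $(\bL-1)^{\# \cF^* - \# \cF} \prod_{F \in \cF^*} \overline{\chi}_{M|F/z_{\cF^*}(F)}(\bL)$. Multiplicativity of characteristic polynomials under direct sums then identifies this with $\chi_{M_\cF}(q)/(q-1)^{\# \cF}$ evaluated at $\bL$. The argument for $\overline{Y}_\cF$ is entirely analogous, using $\overline{\chi}_{M_\cF}$ in place of $\chi_{M_\cF}$ and accounting for the extra diagonal $\bG_m$ quotient.

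The main obstacle is that the naive division $\chi_{M_\cF}(\bL)/(\bL - 1)^{\# \cF}$ is not automatically well-defined inside $K_0(\Var_\C)$, since cancellation by $\bL - 1$ is not valid in general. The explicit product decomposition above circumvents this issue by producing $[Y_\cF]$ directly as a polynomial in $\bL$, reducing the final equality to a purely algebraic identity in $\Z[q]$.
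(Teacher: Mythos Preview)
Your proposal is correct and follows the same route as the paper: both arguments start from the isomorphisms $Y_\cF \times \bG_m^{\#\cF} \cong \init_w(V \cap \bG_m^E)$ and its projective analogue, identify the initial degeneration as an arrangement complement realizing $M_\cF$, and then divide by $(\bL-1)^{\#\cF}$ after knowing that $[Y_\cF]$ is a polynomial in $\bL$. The only difference is that the paper simply asserts the polynomiality of $[Y_\cF]$ as a known property of wonderful-model strata, whereas you supply an explicit justification via the direct sum decomposition of \autoref{initialmatroiddecompositionmatroidminorsmaximalbuildingset}; your added detail is a welcome elaboration but not a different strategy.
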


\section{Rationality of the motivic zeta functions}
\label{rationalitysection}

Let $\Z[q^{\pm 1}] \llbracket T \rrbracket_\rat$ be the $\Z[q^{\pm 1}]$-subalgebra of $\Z[q^{\pm 1}]\llbracket T \rrbracket$ generated by elements of the form
\[ (q-1) \frac{q^b T^a}{1-q^bT^a} = (q-1) \sum_{k = 1}^{\infty} q^{kb}T^{ka}, \]
for $a \in \Z_{>0}$ and $b \in \Z$.  In this section, we will prove the following thoerem, which gives expressions for our motivic zeta functions as elements of $\Z[q^{\pm 1}] \llbracket T \rrbracket_\rat$.

\begin{theorem}
\label{rationalformulaformotiviczetafunctionmaximalbuildingset}
Let $M$ be a matroid. Then
\[
	Z_M(q,T) = q^{-\rk M} \sum_{\cF \in \cN(M)} \frac{\chi_{M_{\cF}}(q)}{(q-1)^{\#\cF}} \prod_{F \in \cF } (q-1) \frac{q^{-\rk F}T^{\#F}}{1 - q^{-\rk F}T^{\#F}},
\]
\[
	Z_M^0(q,T) = q^{-\rk M} \sum_{\cF \in \cN^*(M)} \frac{\chi_{M_{\cF}}(q)}{(q-1)^{\#\cF}} \prod_{F \in \cF } (q-1) \frac{q^{-\rk F}T^{\#F}}{1 - q^{-\rk F}T^{\#F}},
\]
and
\[
	\overline{Z}_M(q,T) = q^{-(\rk M-1)} \sum_{\cF \in \cN^\circ(M)} \frac{\overline{\chi}_{M_{\cF}}(q)}{(q-1)^{\#\cF}} \prod_{F \in \cF} (q-1) \frac{q^{-\rk F}T^{\#F}}{1 - q^{-\rk F}T^{\#F}}.
\]
\end{theorem}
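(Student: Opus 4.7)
The plan is to compute each zeta function by partitioning its domain of summation according to the cones of the Bergman fan $\Sigma(M)$ (or its variants $\Sigma^\circ(M)$), on which the functions $w \mapsto M_w$, $w \mapsto \wt_M(w)$, and $w \mapsto |w|$ all behave piecewise linearly. Each piece then collapses to a product of geometric series indexed by the rays of a single cone.

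For $Z_M(q,T)$, the only $w \in \Z_{\geq 0}^E$ contributing nonzero terms are those with $M_w$ loopless, i.e., those in $\Trop(M) \cap \Z_{\geq 0}^E$, which by Ardila--Klivans is the lattice of $\bigcup_{\cF \in \cN(M)} \sigma_\cF$. Because $\Sigma(M)$ is unimodular, every such lattice point has a unique expression
\[
w = \sum_{F \in \cF} c_F v_F, \qquad \cF \in \cN(M),\ c_F \in \Z_{>0}.
\]
On $\relint(\sigma_\cF)$ we have $M_w = M_\cF$ by \autoref{defMF}, while piecewise linearity yields $\wt_M(w) = \sum_F c_F \rk F$ (since $\wt_M(v_F) = \rk F$ for any flat $F$) and $|w| = \sum_F c_F \#F$. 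Substituting and factoring the sum over tuples $(c_F)_{F \in \cF}$ into a product of geometric series, I get
\[
Z_M(q,T) = q^{-\rk M} \sum_{\cF \in \cN(M)} \chi_{M_\cF}(q) \prod_{F \in \cF} \frac{q^{-\rk F} T^{\#F}}{1 - q^{-\rk F} T^{\#F}}.
\]
Multiplying and dividing by $(q-1)^{\#\cF}$---which is legitimate since $\chi_{M_\cF}(q)/(q-1)^{\#\cF} \in \Z[q]$ by the divisibility remarked after \autoref{initialmatroiddecompositionmatroidminorsmaximalbuildingset}---casts this in the claimed form and simultaneously exhibits $Z_M(q,T)$ as an element of $\Z[q^{\pm 1}]\llbracket T \rrbracket_\rat$. (The empty flag $\cF = \emptyset$, with its empty product interpreted as $1$ and $M_\emptyset = M$, contributes the $w = 0$ term.)

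The argument for $Z_M^0$ is identical with one restriction: a lattice point of $\sigma_\cF$ lies in $\Z_{>0}^E$ iff $\bigcup_{F \in \cF} F = E$, and since $\cF$ is a chain this is equivalent to $E \in \cF$, i.e., $\cF \in \cN^*(M)$. For $\overline{Z}_M$, each class $v \in \Z^E/\Z\bOne$ with $M_v$ loopless has a unique lift $w \in \Z_{\geq 0}^E$ with $\min(w) = 0$, which by construction satisfies $\overline{\wt}_M(v) = \wt_M(w)$ and $\overline{|v|} = |w|$; these canonical lifts are exactly the lattice points of $\Trop(M) \cap \partial \R_{\geq 0}^E = \bigcup_{\cF \in \cN^\circ(M)} \sigma_\cF$, so running the same computation with $\chi$ replaced by $\barchi$ and $\rk M$ by $\rk M - 1$ yields the third formula (again using that $\barchi_{M_\cF}(q)$ is divisible by $(q-1)^{\#\cF}$ for $\cF \in \cN^\circ(M)$, since $M_\cF = M_{\cF \cup \{E\}}$).

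The only technical point to check is that all of this manipulation is valid in the $T$-adic topology on $\Z[q^{\pm 1}]\llbracket T \rrbracket$: each ray $v_F$ contributes $T^{\#F}$ with $\#F \geq 1$, so only finitely many flags contribute to any fixed power of $T$, and the geometric series in each $c_F$ converges $T$-adically since $|w| \to \infty$ as $c_F \to \infty$. I do not expect any real obstacle beyond this bookkeeping; the substance of the proof is entirely carried by the Ardila--Klivans description of $\Trop(M) \cap \R_{\geq 0}^E$ together with unimodularity of $\Sigma(M)$ and the divisibility of $\chi_{M_\cF}(q)$ by $(q-1)^{\#\cF}$.
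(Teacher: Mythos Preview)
Your proposal is correct and follows essentially the same approach as the paper: partition the lattice points of $\Trop(M) \cap \Z_{\geq 0}^E$ (resp.\ $\Z_{>0}^E$, $\partial\R_{\geq 0}^E$) according to the relative interiors of the cones of the unimodular fan $\Sigma(M)$, use constancy of $M_w$ and linearity of $\wt_M$ on each cone (the paper isolates the latter as \autoref{weightislinearoncone}), and collapse each cone's contribution to a product of geometric series via $\wt_M(v_F) = \rk F$ and $|v_F| = \#F$. The paper also begins by disposing of the case where $M$ has a loop, which you handle implicitly by noting that only loopless $M_w$ contribute.
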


\begin{remark}
\label{rmk:realized}
Let $\cA$ be a hyperplane arrangement realizing $M$. Let $Y$ be the wonderful model of $\cA$ with respect to the maximal building set, and let $\overline{Y}$ be the wonderful model of the projectivization of $\cA$ with respect to the maximal building set. The maps $Y \to \bA^{\rk M}$ and $\overline{Y} \to \bP(\bA^{\rk M})$ are log resolutions of $\cA$ and the projectivization of $\cA$, respectively, and applying \cite[Corollary 3.3.2]{DenefLoeser2001} to these resolutions gives \autoref{rationalformulaformotiviczetafunctionmaximalbuildingset} in the case where $M$ is realizable.
\end{remark}

Before we prove \autoref{rationalformulaformotiviczetafunctionmaximalbuildingset}, we give formulas relating $Z_M(q,T)$, $Z_M^0(q,T)$, and $\overline{Z}_M(q,T)$.

\begin{proposition}
\label{relationshipbetweenvariousmotiviczetafunctions}
Let $M$ be a matroid with ground set $E$. Then
\[
	Z_M(q,T) = q^{-1}(q-1) \left( \frac{1}{1 - q^{-\rk M}T^{\# E}} \right) \overline{Z}_M(q,T),
\]
and
\[
	Z_M^0(q,T) = q^{-1} (q-1) \left( \frac{q^{-\rk M} T^{\# E}}{1-q^{-\rk M}T^{\# E}} \right) \overline{Z}_M(q,T).
\]
\end{proposition}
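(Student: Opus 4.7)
The plan is to establish both identities by changing variables in the defining sums, using the action of $\Z \bOne$ on $\Z^E$. First I would parameterize: every $w \in \Z_{\geq 0}^E$ admits a unique decomposition $w = w_0 + k \bOne$ with $w_0 \in \Z_{\geq 0}^E$, $\min(w_0) = 0$, and $k \in \Z_{\geq 0}$; and the canonical lift map $v \mapsto w_0(v)$ yields a bijection between $\Z^E/\Z\bOne$ and $\{w \in \Z_{\geq 0}^E : \min(w) = 0\}$. Thus the indexing set for $Z_M(q,T)$ factors as a product of the indexing set for $\overline{Z}_M(q,T)$ with $\Z_{\geq 0}$, while the indexing set for $Z_M^0(q,T)$ factors similarly but with $\Z_{\geq 1}$ (since $\min(w) \geq 1$ exactly when $k \geq 1$).

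Next I would collect three invariance/additivity facts that follow immediately from the definitions in \autoref{defWeight}: $M_{w_0 + k\bOne} = M_{w_0}$, $\wt_M(w_0 + k\bOne) = \wt_M(w_0) + k\, \rk M$ (since every basis has size $\rk M$), and $|w_0 + k\bOne| = |w_0| + k\, \#E$. Together with the identity $\overline{\wt}_M(v) = \wt_M(w_0(v))$ and $\overline{|v|} = |w_0(v)|$ (from the choice of lift), these let me rewrite
\[
Z_M(q,T) = \sum_{w_0} \chi_{M_{w_0}}(q)\, q^{-\rk M - \wt_M(w_0)} T^{|w_0|} \cdot \sum_{k \geq 0} \bigl(q^{-\rk M} T^{\#E}\bigr)^k,
\]
where $w_0$ runs over lifts with $\min(w_0) = 0$. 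The inner geometric series evaluates to $1/(1 - q^{-\rk M}T^{\#E})$, and for $Z_M^0$ the same computation with $k \geq 1$ yields an extra factor of $q^{-\rk M}T^{\#E}$ in the numerator.

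Finally, I would convert the remaining sum over $w_0$ into $\overline{Z}_M(q,T)$ via the identity $\chi_{M_{w_0}}(q) = (q-1)\, \barchi_{M_{w_0}}(q)$, which also accounts for the shift in the exponent of $q$ from $-\rk M$ to $-(\rk M - 1)$ (contributing the $q^{-1}(q-1)$ prefactor). Combining these gives both identities simultaneously.

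There is no real obstacle here; the proof is essentially bookkeeping. The only subtlety to be careful about is ensuring that $\overline{\wt}_M$ and $\overline{|\cdot|}$ behave as expected on the canonical representative $w_0$ with $\min(w_0) = 0$, which is immediate from the formulas defining them. The argument factors each sum as (sum over $\Z^E/\Z\bOne$) $\times$ (geometric series in $k$), and the rest is a direct comparison of the coefficient in front.
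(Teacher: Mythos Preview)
Your proof is correct. The paper's primary proof simply reads off both identities from the explicit rational formulas in \autoref{rationalformulaformotiviczetafunctionmaximalbuildingset}, using the bijection $\cN^*(M) \leftrightarrow \cN^\circ(M)$ given by $\cF \mapsto \cF \setminus \{E\}$ together with $M_\cF = M_{\cF \cup \{E\}}$; the factor for $F = E$ in the product supplies exactly the geometric-series factor, and $\chi = (q-1)\barchi$ supplies the $q^{-1}(q-1)$ prefactor. However, the paper explicitly notes that the proposition ``can be shown directly using \autoref{boundaryoforthant},'' and that alternative is precisely your argument: the bijection $\Z^E \cap \partial \R_{\geq 0}^E \to \Z^E / \Z\bOne$ in \autoref{boundaryoforthant} is your canonical-lift map $v \mapsto w_0(v)$, and your decomposition $w = w_0 + k\bOne$ with $k \geq 0$ (resp.\ $k \geq 1$) then factors $Z_M$ (resp.\ $Z_M^0$) as $\overline{Z}_M$ times a geometric series. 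So you have written out in full the direct route the paper only gestures at; the paper's one-line proof instead leans on the already-established rational formula.
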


\begin{proof}
This follows immediately from \autoref{rationalformulaformotiviczetafunctionmaximalbuildingset}.  Alternatively, this proposition can be shown directly using \autoref{boundaryoforthant} below.
\end{proof}

\subsection{Proof of \autoref{rationalformulaformotiviczetafunctionmaximalbuildingset}}

Let $M$ be a matroid with ground set $E$. If $M$ has a loop, then both sides of the equations in \autoref{rationalformulaformotiviczetafunctionmaximalbuildingset} are equal to zero. Therefore, for the remainder of \autoref{rationalitysection}, we will assume that $M$ is loopless.

We begin with some notation that will be required for the proof of rationality.

\begin{definition}
\label{defVert}
For $w = (w_e)_{e \in E} \in \R^E$, set
\[
|w| = \sum_{e \in E} w_e.
\]
For $v \in \R^E/\R\bOne$ with lift $w \in \R^E$, set
\[
\overline{|v|} = |w| - (\# E) \min(w).
\]
\end{definition}

\begin{remark} \label{boundaryoforthant}
The projection map $\pi \colon \R^E \to \R^E/\R v_E$ induces a bijection from $\Z^E \cap \partial \R_{\geq 0}^E$ to $\Z^E/\Z v_E$.  For each $w \in \Z^E \cap \partial \R_{\geq 0}^E$ we have
\[
|w| = \overline{|\pi(w)|} \in \Z_{\geq 0},
\]
and
\[
\wt_M(w) = \overline{\wt}_M(\pi(w)) \in \Z_{\geq 0}.
\]
\end{remark}

\begin{lemma}
\label{weightislinearoncone}
Let $\sigma$ be a cone in $\R^E$ such that $w \mapsto M_w$ is constant on $\relint(\sigma)$. Then there exists a linear function $\R^E \to \R$ that coincides with $\wt_M$ on $\sigma$.
\end{lemma}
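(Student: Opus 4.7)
The plan is to exploit the fact that $\wt_M$ is the maximum of the finitely many linear functions $L_B(w) = \sum_{e \in B} w_e$ as $B$ ranges over the bases of $M$, and that the matroid $M_w$ precisely records which of these linear functions attain the maximum at $w$. On any region where the set of maximizing bases is constant, $\wt_M$ must itself be linear, and we will identify an explicit linear function agreeing with it.

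More concretely, I would begin by picking any point $w_0 \in \relint(\sigma)$ and any basis $B_0$ of the matroid $M_{w_0}$. By the definition of $M_{w_0}$, we have $L_{B_0}(w_0) = \wt_M(w_0)$. The hypothesis that $w \mapsto M_w$ is constant on $\relint(\sigma)$ implies that for every $w \in \relint(\sigma)$, $B_0$ is still a basis of $M_w$, so
\[
L_{B_0}(w) = \sum_{e \in B_0} w_e = \wt_M(w) \quad \text{for all } w \in \relint(\sigma).
\]
Thus the linear function $L_{B_0}$ and the (continuous, piecewise linear) function $\wt_M$ agree on $\relint(\sigma)$.

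To extend this agreement to all of $\sigma$, I would invoke two standard facts: first, $\wt_M$ is continuous on $\R^E$ because it is a maximum of finitely many linear functions; second, $\relint(\sigma)$ is dense in $\sigma$ in the Euclidean topology, which holds for any convex set (and in particular any cone). Two continuous functions agreeing on a dense subset of $\sigma$ agree on $\sigma$, so $L_{B_0} = \wt_M$ on $\sigma$, completing the proof.

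I do not anticipate a real obstacle: the argument is essentially the observation that a convex piecewise linear function is linear on each region where a given linear piece is maximal. The only subtlety is being careful that the identification of this region with $\sigma$ is done via $\relint(\sigma)$ first and then closed up by continuity, rather than trying to argue directly on the boundary of $\sigma$, where $M_w$ may jump to a larger matroid but $\wt_M$ remains unchanged in its linear expression.
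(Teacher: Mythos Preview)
Your proof is correct and essentially identical to the paper's: both pick a basis $B$ of the common matroid $M_w$ on $\relint(\sigma)$, observe that the linear function $w \mapsto \sum_{e \in B} w_e$ agrees with $\wt_M$ there, and then extend to all of $\sigma$ by continuity. The only cosmetic difference is that you name a specific point $w_0$ to select the basis, whereas the paper directly names the constant matroid $M_\sigma$.
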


\begin{proof}
Let $M_\sigma$ be the matroid such that $M_\sigma = M_w$ for all $w \in \relint(\sigma)$.  Let $B \subseteq E$ be a basis of $M_\sigma$, and consider the linear function
\[
	\wt_B: \R^E \to \R: (w_e)_{e \in E} \mapsto \sum_{e \in B} w_e.
\]
Then $\wt_M(w) = \wt_B(w)$ for all $w \in \relint(\sigma)$. The lemma then follows from the fact that $\wt_M$ and $\wt_B$ are continuous.
\end{proof}

We now prove \autoref{rationalformulaformotiviczetafunctionmaximalbuildingset}.

\begin{proof}[Proof of \autoref{rationalformulaformotiviczetafunctionmaximalbuildingset}]
Because $\Sigma(M)$ is a unimodular fan supported on $\Trop(M)\cap\R^E_{\geq 0}$, we have
\begin{align*}
	Z_M(q, T) &= q^{-\rk M} \sum_{w \in \Trop(M) \cap \Z_{\geq 0}^E} \chi_{M_w}(q) q^{-\wt_M(w)}T^{|w|}\\
	&= q^{-\rk M} \sum_{\cF \in \cN(M)} \sum_{w \in \relint(\sigma_\cF) \cap \Z^E} \chi_{M_w}(q) q^{-\wt_M(w)}T^{|w|}\\
	&= q^{-\rk M} \sum_{\cF \in \cN(M)} \chi_{M_\cF}(q) \sum_{(k_F) \in \Z_{>0}^\cF} q^{-\wt_M( \sum_{F} k_F v_F)} T^{|\sum_{F} k_F v_F|}.
\end{align*}
Because
\[
	\Trop(M) \cap \R_{>0}^E = \bigcup_{\cF \in \cN^*(M)} \relint(\sigma_\cF),
\]
we also get that
\[
	Z_{M}^0(q,T) = q^{-\rk M} \sum_{\cF \in \cN^*(M)}  \chi_{M_\cF}(q) \sum_{(k_F) \in \Z_{>0}^\cF} q^{-\wt_M( \sum_{F} k_F v_F)} T^{|\sum_{F } k_F v_F|}.
\]
Similarly, by \autoref{boundaryoforthant} and the fact that $\Sigma^\circ(M)$ is a unimodular fan supported on $\Trop(M) \cap \partial \R_{\geq 0}^E$, we have
\begin{align*}
	\overline{Z}_M(q,T) &= q^{-(\rk M - 1)} \sum_{w \in \Trop(M) \cap \Z^E \cap \partial \R_{\geq 0}^E} \barchi_{M_w}(q) q^{-\wt_M(w)} T^{|w|}\\
	&= q^{-(\rk M-1)} \sum_{\cF \in \cN^\circ(M)} \barchi_{M_\cF}(q) \sum_{(k_F)_{F} \in \Z_{>0}^\cF} q^{-\wt_M( \sum_{F} k_F v_F)} T^{|\sum_{F} k_F v_F|}.
\end{align*}
Note that the rightmost sum in all three expressions is the same.  It therefore suffices to compute this sum.  By \autoref{weightislinearoncone}, for each $\cF \in \cN(M)$, we have
\begin{align*}
	\sum_{(k_F) \in \Z_{>0}^\cF} q^{-\wt_M( \sum_{F} k_F v_F)} T^{|\sum_{F} k_F v_F|} &= \prod_{F \in \cF} \sum_{k = 1}^{\infty} \left( q^{-\wt_M(v_F)} T^{|v_F|} \right)^k\\
	&= \prod_{F \in \cF} \frac{q^{-\wt_M(v_F)} T^{|v_F|}}{1-q^{-\wt_M(v_F)} T^{|v_F|}}.
\end{align*}
\autoref{rationalformulaformotiviczetafunctionmaximalbuildingset} then follows from the fact that for each flat $F$ of $M$,
\[
	\wt_M(v_F) = \max_{\text{$B$ basis of $M$}} \#(B \cap F) = \rk F,
\]
and
\[
	|v_F| = \# F.
\]
\end{proof}

\section{The Poincar\'{e} polynomial of a matroid}
\label{poincarepolynomialsection}

In this section, we define certain polynomials that can be associated to a matroid, and prove that these polynomials are palindromic.  This will be an essential ingredient in the proof of \autoref{functionalequationforreducedmotiviczetafunction}.  In the course of the proof, we establish \autoref{thm:hilbertseries}, which gives a formula for the Hilbert series of the cohomology ring of a matroid, in the sense of Feichtner and Yuzvinsky \cite{FeichtnerYuzvinsky}.

We begin by defining Poincar\'{e} polynomials of matroids.

\begin{definition}
\label{def:Poincare}
Let $M$ be a matroid. For each $\cF \in \cN^\circ(M)$, we define the \textbf{Poincar\'{e} polynomial of $M$ at $\cF$} to be
\[
	\overline{P}_M^\cF(q) = \sum_{\substack{\cG \in \cN^\circ(M) \\ \cG \supseteq \cF}} \frac{\overline{\chi}_{M_\cG}(q)}{(q-1)^{\# \cG}} \in \Z[q].
\]
We define the \textbf{Poincar\'{e} polynomial} of $M$ to be
\[
	\overline{P}_M(q) = \sum_{\cF \in \cN^\circ(M)} \frac{\overline{\chi}_{M_\cF}(q)}{(q-1)^{\# \cF}} \in \Z[q].
\]
In other words, $\overline{P}_M(q)$ is the Poincar\'{e} polynomial of $M$ at the empty flag.
\end{definition}

Note that $\overline{P}_M^{\cF}(q)$ is a polynomial of degree $\rk M - 1 - \# \cF$.

\begin{remark}
\label{realizablegeometrypoincarepolynomial}
Let $\cA$ be a hyperplane arrangement realizing $M$. Let $\overline{Y}$ be the wonderful model, with respect to the maximal building set, of the projectivization of $\cA$.  Let $\overline{D}_{\cF}$ be the closure of the stratum in $\overline{Y}$ corresponding to $\cF$.  By \autoref{Prop:Poincare}, $\overline{P}_M^\cF(\bL) = [\overline{D}_\cF] \in K_0(\Var_\C)$.  Because $\overline{D}_\cF$ is smooth and projective, $\overline{P}_M^\cF(q^2)$ is the Poincar\'{e} polynomial of $\overline{D}_\cF$. In particular, $\overline{P}_M(q^2)$ is the Poincar\'{e} polynomial of $\overline{Y}$.
\end{remark}

We now recall the definition of the cohomology ring of a matroid (with respect to the maximal building set).

\begin{definition}\cite{FeichtnerYuzvinsky}
\label{def:Coh}
Let $M$ be a matroid with lattice of flats $\cL$. The \textbf{cohomology ring} of $M$ is the graded algebra
\[
	D^\bullet(M) = \bigoplus_i D^i(M) =  \Z[ \{x_F\}_{F \in \cL_{>\cl(\emptyset)}}] / \mathcal{I},
\]
where each $x_F$ has degree 2, and $\mathcal{I}$ is the ideal generated by
\[
	\prod_{i = 1}^k x_{F_i}, \quad \text{for $\{F_1, \dots, F_k\} \notin \cN(M)$},
\]
and
\[
	\sum_{F \supseteq A}x_F, \quad \text{for $A$ an atom of $\cL$}.
\]
\end{definition}

\begin{remark}
\label{realizablegeometryfeichtneryuzvinskycohomologyring}
Let $\cA$ be a hyperplane arrangement realizing $M$. Let $\overline{Y}$ be the wonderful model, with respect to the maximal building set, of the projectivization of $\cA$. Then $D^\bullet(M)$ is isomorphic to the cohomology ring of $\overline{Y}$ \cite[Section 4 Corollary 2]{FeichtnerYuzvinsky}.
\end{remark}

\begin{remark}
In the case where $M$ is realizable, \autoref{thm:hilbertseries} follows from \autoref{realizablegeometrypoincarepolynomial} and \autoref{realizablegeometryfeichtneryuzvinskycohomologyring}.
\end{remark}

Before proving \autoref{thm:hilbertseries}, we prove that it implies palindromicity of the Poincar\'{e} polynomials.

\begin{theorem}
\label{numericalPoincaredualitymaximalbuildingset}
Let $M$ be a loopless matroid. For each $\cF \in \cN^\circ(M)$,
\[
	\overline{P}_M^\cF(q^{-1}) = q^{-(\rk M - 1 - \# \cF)} \overline{P}_M^\cF(q)  \in \Z[q^{\pm 1}].
\]
\end{theorem}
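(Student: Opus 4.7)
The plan is to reduce the statement for arbitrary $\cF$ to the special case $\cF = \emptyset$ by proving a product decomposition of $\overline{P}_M^\cF(q)$ across the intervals of the flag, and then to derive palindromicity from \autoref{thm:hilbertseries} combined with Poincar\'e duality for the matroid cohomology ring $D^\bullet(N)$ of Adiprasito--Huh--Katz.

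For the reduction, fix $\cF = \{F_1 \subsetneq \cdots \subsetneq F_k\} \in \cN^\circ(M)$ and set $F_0 = \emptyset$, $F_{k+1} = E$. I will establish
\[
	\overline{P}_M^\cF(q) = \prod_{i=1}^{k+1} \overline{P}_{M|F_i/F_{i-1}}(q).
\]
The assignment $\cG \mapsto \cG' := \cG \cup \{E\}$ is a bijection from $\{\cG \in \cN^\circ(M) : \cG \supseteq \cF\}$ to $\{\cG' \in \cN^*(M) : \cG' \supseteq \cF \cup \{E\}\}$, and, since $M_\cG = M_{\cG'}$, it converts the summand $\overline{\chi}_{M_\cG}(q)/(q-1)^{\# \cG}$ into $\chi_{M_{\cG'}}(q)/(q-1)^{\# \cG'}$. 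Each such $\cG'$ decomposes uniquely as $\cG' = \bigsqcup_{i=1}^{k+1} \cH_i$ with $\cH_i = \{G \in \cG' : F_{i-1} \subsetneq G \subseteq F_i\} \in \cN^*(M|F_i/F_{i-1})$, giving a bijection between such flags and tuples $(\cH_i)_i$. By \autoref{decomposeinitialdegenerationofrefinementmaximalbuildingset} we have $M_{\cG'} = \bigoplus_i (M|F_i/F_{i-1})_{\cH_i}$, so $\chi_{M_{\cG'}}(q) = \prod_i \chi_{(M|F_i/F_{i-1})_{\cH_i}}(q)$ and $\# \cG' = \sum_i \# \cH_i$. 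The sum over $\cG'$ then factors as a product of sums, each of which (after applying the same $\cN^\circ \leftrightarrow \cN^*$ bijection in reverse to the minor $M|F_i/F_{i-1}$) equals $\overline{P}_{M|F_i/F_{i-1}}(q)$. Each factor has degree $\rk(M|F_i/F_{i-1}) - 1$, and the degrees sum to $\rk M - (k+1) = \rk M - 1 - \# \cF$, matching the statement.

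It therefore suffices to prove $\overline{P}_N(q) = q^{\rk N - 1} \overline{P}_N(q^{-1})$ for every loopless matroid $N$. By \autoref{thm:hilbertseries}, $\overline{P}_N(q^2) = \sum_{i \geq 0} \rk_\Z D^i(N)\, q^i$ is the Hilbert series of $D^\bullet(N)$. The top nonzero component of $D^\bullet(N)$ sits in degree $2(\rk N - 1)$, and Adiprasito--Huh--Katz Poincar\'e duality for matroid Chow/cohomology rings yields $\rk_\Z D^i(N) = \rk_\Z D^{2(\rk N - 1) - i}(N)$ for all $i$. This palindromicity of the Hilbert series is precisely $\overline{P}_N(q^2) = q^{2(\rk N - 1)} \overline{P}_N(q^{-2})$, which gives the desired identity after the substitution $q^2 \mapsto q$. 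The main obstacle is the combinatorial bookkeeping in the reduction step, particularly keeping track of the shift $\# \cG' = \# \cG + 1$ and verifying the bijection between flags of $M$ containing $\cF \cup \{E\}$ and compatible tuples of flags of the minors $M|F_i/F_{i-1}$; palindromicity itself is then an immediate consequence of \autoref{thm:hilbertseries} and matroid Poincar\'e duality.
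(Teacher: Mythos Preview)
Your proof is correct and follows essentially the same approach as the paper: both establish the product decomposition $\overline{P}_M^\cF(q) = \prod_{F \in \cF \cup \{E\}} \overline{P}_{M|F/z_{\cF\cup\{E\}}(F)}(q)$ via \autoref{decomposeinitialdegenerationofrefinementmaximalbuildingset} and the $\cN^\circ \leftrightarrow \cN^*$ bijection, and then deduce the empty-flag case from \autoref{thm:hilbertseries} together with Poincar\'e duality for $D^\bullet(N)$ from Adiprasito--Huh--Katz. Your write-up spells out the bookkeeping of the flag decomposition more explicitly, but the argument is the same.
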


\begin{proof}
When $\cF = \emptyset$, this follows from \autoref{thm:hilbertseries} and \cite[Theorem 6.19]{AdiprasitoHuhKatz}, which states that $D^\bullet(M)$ satisfies a version of Poincar\'{e} duality.

Now let $\cF \in \cN^\circ(M)$, and let $E$ be the ground set of $M$. For each $F \in \cF \cup \{E\}$, let $M_F$ denote the matroid minor $M| F /z_{\cF \cup \{E\}}(F)$. For each $F \in \cF \cup \{E\}$ and $\cG \in \cN^\circ(M)$ such that $\cG \supseteq \cF$, let $\cG_F \in \cN^\circ(M_F)$ denote the flag
\[
	\{G \in \cG \mid z_{\cF \cup \{E\}}(F) \subsetneq G \subsetneq F\}.
\]
Then by \autoref{decomposeinitialdegenerationofrefinementmaximalbuildingset},
\begin{align*}
	\frac{\barchi_{M_\cG}(q)}{(q-1)^{\# \cG}} &= \frac{\chi_{M_{\cG \cup \{E\}}}(q)}{(q-1)^{\# \cG \cup \{E\}}} = \prod_{F \in \cF \cup \{E\}} \frac{\chi_{(M_F)_{\cG_F \cup \{F\}}}(q)}{(q-1)^{\#(\cG_F \cup \{F\})}}\\
	&= \prod_{F \in \cF \cup \{E\}} \frac{\barchi_{(M_F)_{\cG_F}}(q)}{(q-1)^{\# \cG_F}}.
\end{align*}
Therefore,
\begin{align}
	\overline{P}_M^\cF(q) &= \sum_{\substack{\cG \in \cN^\circ(M) \\ \cG \supseteq \cF}} \frac{\overline{\chi}_{M_\cG}(q)}{(q-1)^{\# \cG}} = \prod_{F \in \cF \cup \{E\}} \, \sum_{\cG \in \cN^\circ(M_F)} \frac{\barchi_{(M_F)_\cG}(q)}{(q-1)^{\# \cG}}\nonumber\\
	&= \prod_{F \in \cF \cup \{E\}} \overline{P}_{M_F}(q). \label{poincareatflagproductofpoincare}
\end{align}
Thus by the case where the flag is empty,
\begin{align*}
	\overline{P}_M^\cF(q^{-1}) &= \prod_{F \in \cF \cup \{E\}} \overline{P}_{M_F}(q^{-1}) = \prod_{F \in \cF \cup \{E\}} q^{- (\rk M_F - 1)} \overline{P}_{M_F}(q) \\
	&= q^{-(\rk M - 1 - \# \cF)} \overline{P}_M^\cF(q) ,
\end{align*}
where we note that \autoref{initialmatroiddecompositionmatroidminorsmaximalbuildingset} implies that
\[
	\rk M = \rk M_{\cF \cup \{E\}} = \sum_{F \in \cF \cup \{E\}} \rk M_F.
\]
\end{proof}

\begin{remark}
When $M$ is realizable, \autoref{numericalPoincaredualitymaximalbuildingset} is a consequence of \autoref{realizablegeometrypoincarepolynomial}. In general, \autoref{numericalPoincaredualitymaximalbuildingset} states that $\overline{P}_M^\cF(q)$ behaves under $q \mapsto q^{-1}$ like the class of a smooth projective variety of dimension $\rk M - 1 - \# \cF$.
\end{remark}

\subsection{Proof of \autoref{thm:hilbertseries}} \label{proofofpoincarecohomologysection}

For a matroid $M$, let $H_M(q)$ be the polynomial
\[
	H_M(q) = \sum_{\cF \in \cN(M)} \prod_{F \in \cF} \left( [\rk F - \rk z_\cF(F)]_q - 1 \right) \in \Z[q].
\]
Feichtner and Yuzvinsky show \cite[Section 3 Corollary 1]{FeichtnerYuzvinsky} that for any matroid $M$, a $\Z$-basis of $D^\bullet(M)$ is given by the set of monomials of the form
\[
	\prod_{F \in \cF} x_F^{m_F},
\]
where $\cF \in \cN(M)$ and each $m_F \in \{1, \dots, \rk F - \rk z_\cF(F) - 1\}$. Using this monomial basis, it is straightforward to see that
\[
	\sum_{i \geq 0} \rk_\Z D^i(M)q^i = H_M(q^2).
\]
To prove \autoref{thm:hilbertseries}, it suffices to show that $\overline{P}_M(q) = H_M(q)$ when $M$ is loopless. To do this, we will show in the next two lemmas that both of these polynomials satisfy the same recursive formula.

\begin{lemma}
\label{recursionforpoincarepolynomials}
Let $M$ be a matroid with lattice of flats $\cL$. Then
\[
	\overline{P}_M(q) = \barchi_M(q) + \sum_{F \in \widehat\cL} \barchi_{M/F}(q)\, \overline{P}_{M|F}(q).
\]
\end{lemma}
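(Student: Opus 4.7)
The plan is to partition the sum defining $\overline{P}_M(q)$ according to the maximum element of the flag $\cF \in \cN^\circ(M)$, separating out the empty flag and then grouping the non-empty flags by their top flat $F \in \widehat{\cL}$. The empty-flag contribution is $\overline{\chi}_{M_\emptyset}(q)/(q-1)^0 = \overline{\chi}_M(q)$, which is exactly the first term on the right-hand side. For any non-empty $\cF \in \cN^\circ(M)$ with $F = \max \cF$, write $\cF = \cF' \cup \{F\}$; the resulting $\cF'$ is a (possibly empty) flag of non-empty flats of $M$ strictly below $F$, and these flags biject with $\cN^\circ(M|F)$.

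The technical step I would carry out is the factorization
\[
	\frac{\overline{\chi}_{M_\cF}(q)}{(q-1)^{\#\cF}} \;=\; \overline{\chi}_{M/F}(q) \cdot \frac{\overline{\chi}_{(M|F)_{\cF'}}(q)}{(q-1)^{\#\cF'}}.
\]
To establish this, I will use the identity $M_\cF = M_{\cF \cup \{E\}}$ and apply \autoref{initialmatroiddecompositionmatroidminorsmaximalbuildingset} to the flag $\cF \cup \{E\} \in \cN^*(M)$. Singling out the summand corresponding to $E$ (which produces the factor $M|E/F = M/F$) and recognizing the remaining summands as the direct-sum decomposition of $(M|F)_{\cF' \cup \{F\}} = (M|F)_{\cF'}$ coming from the same theorem applied inside $M|F$, I obtain the matroid direct-sum decomposition
\[
	M_\cF \;=\; (M/F) \,\oplus\, (M|F)_{\cF'}.
\]
Since the characteristic polynomial is multiplicative under direct sums of matroids on disjoint ground sets and $\chi_N(q) = (q-1)\overline{\chi}_N(q)$ for any loopless $N$, dividing by $(q-1)^{\#\cF+1} = (q-1)^{\#\cF'+2}$ yields the displayed factorization.

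Substituting this factorization into the definition of $\overline{P}_M(q)$ and summing gives
\begin{align*}
	\overline{P}_M(q) &= \overline{\chi}_M(q) + \sum_{F \in \widehat{\cL}} \; \sum_{\cF' \in \cN^\circ(M|F)} \overline{\chi}_{M/F}(q) \cdot \frac{\overline{\chi}_{(M|F)_{\cF'}}(q)}{(q-1)^{\#\cF'}} \\
	&= \overline{\chi}_M(q) + \sum_{F \in \widehat{\cL}} \overline{\chi}_{M/F}(q)\, \overline{P}_{M|F}(q),
\end{align*}
where in the second equality I pull the factor $\overline{\chi}_{M/F}(q)$ (independent of $\cF'$) outside the inner sum and recognize the remaining sum as $\overline{P}_{M|F}(q)$ by definition. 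No obstacle of real depth arises; the only thing to be careful about is the bookkeeping of $(q-1)$-exponents and the identification of flags of $\cN^\circ(M)$ with top $F$ as flags in $\cN^\circ(M|F)$, both of which are routine consequences of \autoref{initialmatroiddecompositionmatroidminorsmaximalbuildingset}.
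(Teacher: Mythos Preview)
Your proposal is correct and follows essentially the same approach as the paper: both partition the defining sum for $\overline{P}_M(q)$ by the maximal element of $\cF$, use \autoref{initialmatroiddecompositionmatroidminorsmaximalbuildingset} applied to $\cF \cup \{E\}$ to obtain the direct-sum decomposition $M_\cF = (M/F) \oplus (M|F)_{\cF'}$, and then factor the reduced characteristic polynomial accordingly. The only minor omission is that the paper first disposes of the case where $M$ has a loop (both sides vanish), whereas you implicitly assume $M$ is loopless when invoking \autoref{initialmatroiddecompositionmatroidminorsmaximalbuildingset} and when identifying ``non-empty flats below $F$'' with $\cN^\circ(M|F)$; this is a one-line check you should add.
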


\begin{proof}
If $M$ has a loop, then both sides are equal to 0, so we may assume that $M$ is loopless. Let $E$ be the ground set of $M$. If $\cF \in \cN^\circ(M)$, then the maximal element of $\cF$ is a proper flat. Thus, \autoref{initialmatroiddecompositionmatroidminorsmaximalbuildingset} implies
\begin{align*}
	M_\cF &= \bigoplus_{F \in \cF \cup \{E\}} M | F / z_{\cF \cup \{E\}}(F) \\
	&= (M / \max \cF) \oplus \bigoplus_{F \in \cF} M | F / z_{\cF}(F) \\
	&= (M / \max \cF) \oplus (M | \max \cF)_{\cF \setminus \{\max \cF\}},
\end{align*}
where by slight abuse of notation, we let $\cF \setminus \{\max \cF\}$ denote the corresponding element of $\cN^\circ(M | \max\cF)$. Thus for any $\cF \in \cN^\circ(M)$,
\[
	\barchi_{M_\cF}(q) = \left( \barchi_{M/\max\cF}(q)\right) \left(\barchi_{(M | \max \cF)_{\cF \setminus \{\max \cF\}}}(q) \right) (q-1).
\]
Now, in the sum defining $\overline{P}_M(q)$, we sort flags by their maximal element:
\begin{align*}
	\overline{P}_M(q) &= \sum_{\cF \in \cN^\circ(M)} \frac{\overline{\chi}_{M_\cF}(q)}{(q-1)^{\# \cF}} = \barchi_M(q) + \sum_{F \in \widehat\cL} \, \sum_{\substack{\cF \in \cN^\circ(M)\\ \max \cF = F}} \frac{\overline{\chi}_{M_\cF}(q)}{(q-1)^{\# \cF}}\\
	&= \barchi_M(q) + \sum_{F \in \widehat\cL} \barchi_{M/F}(q) \sum_{\cF \in \cN^\circ(M | F)}  \frac{\overline{\chi}_{(M|F)_\cF}(q)}{(q-1)^{\# \cF}}\\
	&= \barchi_M(q) + \sum_{F \in \widehat\cL} \barchi_{M/F}(q)\, \overline{P}_{M|F}(q).
\end{align*}
\end{proof}

\begin{lemma}
\label{recursionforhilbertseries}
Let $M$ be a loopless matroid with lattice of flats $\cL$. Then
\[
	H_M(q) = \barchi_M(q) + \sum_{F \in \widehat\cL} \barchi_{M/F}(q)\, H_{M|F}(q).
\]
\end{lemma}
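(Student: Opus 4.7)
The plan is to prove the recursion by a direct manipulation of the sum defining $H_M(q)$, following the same strategy as in the proof of \autoref{recursionforpoincarepolynomials} but organizing the flags $\cF \in \cN(M)$ according to whether or not they contain the ground set $E$. This will reduce the identity to an application of \autoref{mobius} together with some reindexing.

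First, I would split the sum via $\cN(M) = \cN^\circ(M) \sqcup \cN^*(M)$. Each flag in $\cN^*(M)$ has the form $\cG \cup \{E\}$ for a unique $\cG \in \cN^\circ(M)$, and the factor indexed by $E$ equals $[\rk M - \rk z^*(\cG)]_q - 1$, where I adopt the convention $z^*(\cG) = \max \cG$ when $\cG$ is nonempty and $z^*(\emptyset) = \cl(\emptyset)$ (so $\rk z^*(\emptyset) = 0$ since $M$ is loopless). Pairing each $\cG \in \cN^\circ(M)$ with $\cG \cup \{E\}$ collapses $1 + ([\rk M - \rk z^*(\cG)]_q - 1)$ into $[\rk M - \rk z^*(\cG)]_q$, producing
\[
	H_M(q) = \sum_{\cG \in \cN^\circ(M)} [\rk M - \rk z^*(\cG)]_q \prod_{F \in \cG}\left([\rk F - \rk z_\cG(F)]_q - 1\right).
\]

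Next, I would expand each factor $[\rk M - \rk z^*(\cG)]_q$ using \autoref{mobius} with $F_1 = z^*(\cG)$ and $F_2 = E$, obtaining $\sum_{z^*(\cG) \subseteq F \subsetneq E} \barchi_{M/F}(q)$. The contribution of $\cG = \emptyset$ is then $\barchi_M(q) + \sum_{F \in \widehat\cL}\barchi_{M/F}(q)$, while for nonempty $\cG$ the flat $\max \cG$ lies in $\widehat\cL$, so the $F$-range reduces to $\{F \in \widehat\cL : \max \cG \subseteq F\}$. Interchanging summations regroups the nonempty-$\cG$ terms as $\sum_{F \in \widehat\cL}\barchi_{M/F}(q)\, S_F(q)$, where $S_F(q)$ is the inner sum over nonempty flags $\cG \in \cN^\circ(M)$ with $\max \cG \subseteq F$.

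Finally, I would identify $S_F(q)$ with $H_{M|F}(q) - 1$. Since $M|F$ is loopless, its non-minimal flats are exactly the non-empty flats of $M$ contained in $F$, and ranks match, so nonempty flags $\cG \in \cN^\circ(M)$ with $\max\cG \subseteq F$ are precisely the nonempty flags in $\cN(M|F)$, with identical products. Substituting $S_F(q) = H_{M|F}(q) - 1$ cancels one of the two copies of $\sum_{F \in \widehat\cL}\barchi_{M/F}(q)$, yielding the desired recurrence. The only real subtlety lies in the bookkeeping around the conventions $z^*(\emptyset) = \cl(\emptyset)$ and the identification $z_{\cG \cup \{E\}}(E) = z^*(\cG)$; once those are set up correctly, the rest is a mechanical application of \autoref{mobius} and interchange of summation.
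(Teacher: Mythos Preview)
Your proof is correct and is essentially the paper's argument run in the reverse direction: the paper starts from the right-hand side, uses \autoref{redcharsub} and \autoref{mobius} to collapse the sum over $F$, and then recognizes the result as $H_M(q)$ via the same pairing $\cG \leftrightarrow \cG \cup \{E\}$, whereas you start from $H_M(q)$, perform that pairing first, and then expand via \autoref{mobius}. The key ingredients---the bijection between $\cN^\circ(M)$ and $\cN^*(M)$, the identification of nonempty flags $\cG$ with $\max\cG \subseteq F$ as $\cN(M|F)\setminus\{\emptyset\}$, and the application of \autoref{mobius}---are identical in both.
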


\begin{proof}
Let $E$ be the ground set of $M$. By \autoref{redcharsub},
\begin{align*}
	&\barchi_M(q) + \sum_{F \in \widehat\cL} \barchi_{M/F}(q)\, H_{M|F}(q) = [\rk M]_q + \sum_{F \in \widehat\cL} \barchi_{M/F}(q)\left( H_{M|F}(q) - 1 \right)\\
	&= [\rk M]_q + \sum_{F \in \widehat\cL} \barchi_{M/F}(q) \sum_{\cF \in \cN(M | F) \setminus \{\emptyset\}} \,\prod_{G \in \cF} \left( [ \rk G - \rk z_\cF(G)]_q - 1\right) \\
	&= [\rk M]_q + \sum_{\cF \in \cN^\circ(M) \setminus \{\emptyset\}} \sum_{\substack{F \in \cL \\ \max\cF \subseteq F \subsetneq E}} \barchi_{M/F}(q) \prod_{G \in \cF} \left( [ \rk G - \rk z_\cF(G) ]_q - 1\right).
\end{align*}
By \autoref{mobius}, the above string of equalities continues
\begin{align*}
	&= [\rk M]_q + \sum_{\cF \in \cN^\circ(M) \setminus \{\emptyset\}} [\rk M - \rk(\max \cF)]_q \prod_{G \in \cF} \left( [ \rk G - \rk z_\cF(G) ]_q - 1\right)\\
	&= H_M(q),
\end{align*}
where the last equality can be seen by rewriting
\[
	[\rk M]_q = 1+([\rk E - \rk z_{\{E\}}(E)]_q - 1),
\]
and
\[
	[\rk E - \rk(\max \cF)]_q = 1 + ([\rk E - \rk z_{\cF \cup \{E\}} (E)]_q - 1)
\]
for each $\cF \in \cN^\circ(M) \setminus \{\emptyset\}$.
\end{proof}

The following corollary completes our proof of \autoref{thm:hilbertseries}.

\begin{corollary}
\label{poincarepolynomialrelatedtomonomialbasis}
Let $M$ be a loopless matroid. Then
\[
	\overline{P}_M(q) = H_M(q) = \sum_{\cF \in \cN(M)} \prod_{F \in \cF} \left( [\rk F - \rk z_\cF(F)]_q - 1 \right).
\]
\end{corollary}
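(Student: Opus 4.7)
The plan is to prove this by induction on $\rk M$, using the two preceding lemmas, which establish that $\overline{P}_M(q)$ and $H_M(q)$ satisfy identical recursions. Note that the second equality in the statement is just the definition of $H_M(q)$, so the content of the corollary is the identity $\overline{P}_M(q) = H_M(q)$ for loopless $M$.

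For the base case, I would take $\rk M = 1$. Then $\cL = \{\cl(\emptyset), E\} = \{\emptyset, E\}$, so $\widehat{\cL} = \emptyset$ and the sums in both \autoref{recursionforpoincarepolynomials} and \autoref{recursionforhilbertseries} are empty. Each recursion therefore collapses to $\overline{P}_M(q) = \barchi_M(q)$ and $H_M(q) = \barchi_M(q)$, respectively, so the two polynomials agree in rank one.

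For the inductive step, suppose $\rk M \geq 2$ and the identity $\overline{P}_{M'}(q) = H_{M'}(q)$ is known for every loopless matroid $M'$ of rank less than $\rk M$. For each $F \in \widehat{\cL}$, the restriction $M|F$ is loopless and has rank $\rk F < \rk M$, so by the inductive hypothesis $\overline{P}_{M|F}(q) = H_{M|F}(q)$. Substituting these equalities term by term into the recursion
\[
	\overline{P}_M(q) = \barchi_M(q) + \sum_{F \in \widehat\cL} \barchi_{M/F}(q)\, \overline{P}_{M|F}(q)
\]
from \autoref{recursionforpoincarepolynomials} and comparing with the recursion
\[
	H_M(q) = \barchi_M(q) + \sum_{F \in \widehat\cL} \barchi_{M/F}(q)\, H_{M|F}(q)
\]
from \autoref{recursionforhilbertseries} yields $\overline{P}_M(q) = H_M(q)$, completing the induction.

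There is no substantive obstacle here, since the two matching recursions have already done all the real work; the only thing to verify carefully is that the inductive hypothesis applies to each $M|F$ for $F \in \widehat{\cL}$, which follows immediately because $F$ is a non-empty proper flat (so $M|F$ is loopless of strictly smaller rank) and $H_M$ is defined for arbitrary matroids while the equality $\overline{P}_{M|F} = H_{M|F}$ only needs the loopless hypothesis that $M|F$ inherits from $M$.
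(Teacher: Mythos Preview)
Your proposal is correct and follows exactly the approach indicated in the paper, which simply states that the result follows from \autoref{recursionforpoincarepolynomials}, \autoref{recursionforhilbertseries}, and induction on the rank of $M$. You have merely filled in the routine details of the base case and inductive step that the paper leaves implicit.
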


\begin{proof}
This follows from \autoref{recursionforpoincarepolynomials}, \autoref{recursionforhilbertseries}, and induction on the rank of $M$.
\end{proof}

\section{A functional equation for the reduced motivic zeta function}
\label{proofoffunctionalequationsection}

In this section, we prove \autoref{functionalequationforreducedmotiviczetafunction}.  As a consequence, we obtain \autoref{alternatingsumofcharactersticpolynomialsgivesbackwardscharacteristicpolynomial} as well.  If the matroid $M$ has a loop, then both sides of the equations in \autoref{functionalequationforreducedmotiviczetafunction} and \autoref{alternatingsumofcharactersticpolynomialsgivesbackwardscharacteristicpolynomial} are equal to 0.  We therefore assume in this section that $M$ is a loopless matroid.

\autoref{functionalequationforreducedmotiviczetafunction} is a matroid analogue of the functional equation \cite[Chapter 7 Proposition 3.3.10]{CNS}, which holds for the motivic Igusa zeta function of a subscheme of a smooth projective variety. Our proof is based on the proof in \cite{CNS}.  For readers familiar with this argument, we note the following adaptations, which are necessary in the setting of non-realizable matroids.

\begin{itemize}

\item The formula for $\overline{Z}_M(q,T)$ given in \autoref{rationalformulaformotiviczetafunctionmaximalbuildingset} takes the place of the formula for a motivic Igusa zeta function in terms of a log resolution.

\item Each Poincar\'{e} polynomial $\overline{P}_M^\cF(q)$ plays the role of the closure of a stratum in a log resolution.

\item The class of a locally closed stratum in a log resolution is replaced by the polynomial $\barchi_{M_\cF}(q)/(q-1)^{\# \cF}$. In the geometric setting, the class of a locally closed stratum of a log resolution can be written as an alternating sum of closures of strata. A combinatorial analogue of this statement is given by applying M\"{o}bius inversion, on the poset of flags of flats, to the definition of the Poincar\'{e} polynomials. Explicitly, if $\cF \in \cN^\circ(M)$, then
\begin{align}
\label{eq:MobiusPoincare}
	\frac{\barchi_{M_\cF}(q)}{(q-1)^{\# \cF}} =  \sum_{\substack{\cG \in \cN^\circ(M) \\ \cG \supseteq \cF}}(-1)^{\#\cG - \#\cF}\, \overline{P}_M^\cG(q).
\end{align}

\end{itemize}

Note that $\Z[q^{\pm 1}]\llbracket T \rrbracket_\rat$ is contained in the field $\Q(q)(T)$, where the latter is considered as a subfield of $\Q(q)\llbracket T \rrbracket [T^{-1}]$. Thus for any $f(q,T) \in \Z[q^{\pm 1}]\llbracket T \rrbracket_\rat$, there is a well defined $f(q,T^{-1}) \in \Q(q)(T)$.  It suffices to prove that \autoref{functionalequationforreducedmotiviczetafunction} holds in the field $\Q(q)(T)$, so throughout our proof, we will freely manipulate our expressions as rational functions. We begin with the following lemma.

\begin{lemma}
\label{rewritingreducedmotiviczetafunctionintermsofchowpolynomials}
Let $M$ be a matroid. We have
\[
	\overline{Z}_M(q,T) = q^{-(\rk M-1)} \sum_{\cF \in \cN^\circ(M)} (-1)^{\#\cF}\, \overline{P}_M^\cF(q) \prod_{F \in \cF} \frac{1- q^{-(\rk F - 1)} T^{\# F}}{1-q^{-\rk F} T^{\# F}}.
\]
\end{lemma}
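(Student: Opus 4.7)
The plan is to start from the explicit rational formula for $\overline{Z}_M(q,T)$ given in \autoref{rationalformulaformotiviczetafunctionmaximalbuildingset}, substitute the M\"obius-type identity \eqref{eq:MobiusPoincare} to replace each coefficient $\overline{\chi}_{M_\cF}(q)/(q-1)^{\#\cF}$ by an alternating sum of Poincar\'{e} polynomials $\overline{P}_M^\cG(q)$, and then swap the order of summation so that the outer index becomes $\cG$ and the inner sum ranges over subflags $\cF \subseteq \cG$. The key combinatorial observation powering the swap is that every subset of a flag of flats is itself a flag, so the inner summation is over \emph{all} subsets of $\cG$, which will let the sum factor as a product indexed by the elements of $\cG$.

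More concretely, after the swap one obtains
\[
	\overline{Z}_M(q,T) = q^{-(\rk M -1)} \sum_{\cG \in \cN^\circ(M)} \overline{P}_M^\cG(q) \sum_{\cF \subseteq \cG} (-1)^{\#\cG - \#\cF} \prod_{F \in \cF} a_F,
\]
where $a_F = (q-1)\, q^{-\rk F}T^{\#F}/(1-q^{-\rk F}T^{\#F})$. Since $\cF$ ranges over all subsets of the finite set $\cG$, the inner sum factors as $(-1)^{\#\cG}\prod_{F \in \cG}(1 - a_F)$. A direct one-line computation gives
\[
	1 - a_F = \frac{1 - q^{-\rk F}T^{\#F} - (q-1)q^{-\rk F}T^{\#F}}{1 - q^{-\rk F}T^{\#F}} = \frac{1 - q^{-(\rk F - 1)}T^{\#F}}{1 - q^{-\rk F}T^{\#F}},
\]
which produces exactly the desired product over $F \in \cG$ with sign $(-1)^{\#\cG}$. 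Relabeling $\cG$ back to $\cF$ yields the statement of the lemma.

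The only real point to be careful about is the substitution and swap step: one must verify that \eqref{eq:MobiusPoincare} holds (this is immediate from the definition of $\overline{P}_M^\cF(q)$ via M\"obius inversion on the poset of flags ordered by inclusion, where the associated M\"obius function is $(-1)^{\#\cG - \#\cF}$ because this poset is the face poset of a simplicial complex) and that the manipulation makes sense in $\Q(q)(T)$. Since the paper already notes that all three zeta functions lie in $\Z[q^{\pm 1}]\llbracket T \rrbracket_{\rat} \subset \Q(q)(T)$, there is no convergence issue and all the formal identities above are legitimate rational function identities. No step here is a genuine obstacle; the lemma is essentially a bookkeeping exercise enabled by the fact that the face poset of $\cN^\circ(M)$ is a simplicial poset under inclusion.
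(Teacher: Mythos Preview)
Your proposal is correct and follows essentially the same route as the paper's proof: start from the rational formula of \autoref{rationalformulaformotiviczetafunctionmaximalbuildingset}, substitute \eqref{eq:MobiusPoincare}, swap the order of summation, and then use the fact that every subset of a flag in $\cN^\circ(M)$ is again in $\cN^\circ(M)$ to factor the inner sum as $\prod_{F \in \cG}(1 - a_F)$ before simplifying each factor. The paper presents the product identity $\sum_{\cF \subseteq \cG}(-1)^{\#\cF}\prod_{F \in \cF} a_F = \prod_{F \in \cG}(1 - a_F)$ up front rather than after the swap, but this is a purely cosmetic difference.
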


\begin{proof}
For each $\cG \in \cN^\circ(M)$,
\[
	\sum_{\substack{\cF \in \cN^\circ(M)\\ \cF \subseteq \cG}}(-1)^{\# \cF } \prod_{F \in \cF} \frac{(q-1)q^{-\rk F}T^{\#F}}{1 - q^{-\rk F}T^{\#F}} = \prod_{F \in \cG} \left( 1 - \frac{(q-1)q^{-\rk F}T^{\#F}}{1 - q^{-\rk F}T^{\#F}}\right).
\]
By \autoref{rationalformulaformotiviczetafunctionmaximalbuildingset},
\begin{align*}
	\overline{Z}_M(q,T) &= q^{-(\rk M-1)} \sum_{\cF \in \cN^\circ(M)} \frac{\overline{\chi}_{M_{\cF}}(q)}{(q-1)^{\#\cF}} \prod_{F \in \cF} (q-1) \frac{q^{-\rk F}T^{\#F}}{1 - q^{-\rk F}T^{\#F}} .
\end{align*}
By \eqref{eq:MobiusPoincare}, this is equal to
\begin{align*}
	& q^{-(\rk M-1)} \sum_{\substack{ \cF, \cG \in \cN^\circ(M) \\ \cF \subseteq \cG}}(-1)^{\#\cG - \#\cF} \, \overline{P}_M^\cG(q)  \prod_{F \in \cF} \frac{(q-1)q^{-\rk F}T^{\#F}}{1 - q^{-\rk F}T^{\#F}} ,
\end{align*}
which, by the above, is equal to
\begin{align*}
	& q^{-(\rk M-1)} \sum_{\cG \in \cN^\circ(M)}(-1)^{\#\cG} \,\overline{P}_M^\cG(q) \prod_{F \in \cG} \left( 1 - \frac{(q-1)q^{-\rk F}T^{\#F}}{1 - q^{-\rk F}T^{\#F}}\right)\\
	&=q^{-(\rk M-1)} \sum_{\cF \in \cN^\circ(M)} (-1)^{\#\cF} \,\overline{P}_M^\cF(q) \prod_{F \in \cF} \frac{1- q^{-(\rk F - 1)} T^{\# F}}{1-q^{-\rk F} T^{\# F}}.
\end{align*}
\end{proof}

We now complete the proof of \autoref{functionalequationforreducedmotiviczetafunction}.

\begin{proof}[Proof of \autoref{functionalequationforreducedmotiviczetafunction}]
By \autoref{numericalPoincaredualitymaximalbuildingset} and \autoref{rewritingreducedmotiviczetafunctionintermsofchowpolynomials},
\begin{align*}
	\overline{Z}_M(q^{-1}, T^{-1}) &= q^{\rk M-1} \sum_{\cF \in \cN^\circ(M)} (-1)^{\#\cF} \,\overline{P}_M^\cF(q^{-1}) \prod_{F \in \cF} \frac{1- q^{\rk F - 1} T^{-\# F}}{1-q^{\rk F} T^{-\# F}}\\
	&= \sum_{\cF \in \cN^\circ(M)} (-1)^{\#\cF } \,\overline{P}_M^\cF(q)q^{\#\cF} \prod_{F \in \cF} \frac{q^{-(\rk F -1)}T^{\# F} - 1 }{q(q^{-\rk F}T^{\# F}-1)}\\
	&= \sum_{\cF \in \cN^\circ(M)} (-1)^{\#\cF} \,\overline{P}_M^\cF(q) \prod_{F \in \cF} \frac{1- q^{-(\rk F - 1)} T^{\# F}}{1-q^{-\rk F} T^{\# F}}\\
	&= q^{\rk M-1} \overline{Z}_M(q,T).
\end{align*}
\end{proof}

We will use the remainder of this section to prove \autoref{alternatingsumofcharactersticpolynomialsgivesbackwardscharacteristicpolynomial}.  Note that, if $f(q,T) \in \Z[q^{\pm 1}]\llbracket T \rrbracket_\rat$, then $f(q,T^{-1})$ is an element of $\Z[q^{\pm 1}]\llbracket T \rrbracket_\rat$ as well.  This is because
\[
	(q-1) \frac{q^b (T^{-1})^a}{1-q^b(T^{-1})^a} = -(q-1)\left( 1+ \frac{q^{-b}T^a}{1-q^{-b}T^a} \right) \in \Z[q^{\pm 1}]\llbracket T \rrbracket_\rat.
\]
Let
\[
	\lim_{T \to \infty}: \Z[q^{\pm 1}]\llbracket T \rrbracket_{\rat} \to \Z[q^{\pm 1}]
\]
denote the $\Z[q^{\pm 1}]$-algebra map obtained by composing the involution
\[
	\Z[q^{\pm 1}]\llbracket T \rrbracket_{\rat} \to \Z[q^{\pm 1}]\llbracket T \rrbracket_{\rat}: f(q,T) \mapsto f(q, T^{-1})
\]
with the map
\[
	\Z[q^{\pm 1}]\llbracket T \rrbracket_{\rat} \to \Z[q^{\pm 1}]: f(q,T) \mapsto f(q,0).
\]
We will prove \autoref{alternatingsumofcharactersticpolynomialsgivesbackwardscharacteristicpolynomial} by applying $\lim_{T \to \infty}$ to both sides of the functional equation in \autoref{functionalequationforreducedmotiviczetafunction}.

\begin{proof}[Proof of \autoref{alternatingsumofcharactersticpolynomialsgivesbackwardscharacteristicpolynomial}]
We see that for any $a \in \Z_{>0}$ and $b \in \Z$,
\[
	\lim_{T \to \infty} \left( (q-1) \frac{q^b T^a}{1-q^bT^a} \right) = -(q-1).
\]
Therefore by \autoref{rationalformulaformotiviczetafunctionmaximalbuildingset},
\[
	\lim_{T \to \infty} (q^{\rk M-1} \overline{Z}_M(q,T)) = \sum_{\cF \in \cN^\circ(M)} (-1)^{\#\cF} \overline{\chi}_{M_{\cF}}(q).
\]
Because $T \mapsto T^{-1}$ is an involution, for any $f(q,T) \in \Z[q^{\pm 1}]\llbracket T \rrbracket_{\rat}$,
\[
	\lim_{T \to \infty} f(q,T^{-1}) = f(q,0),
\]
and by definition
\[
	\overline{Z}_M(q, 0) = \overline{\chi}_{M_{\textbf{0}}}(q) q^{-(\rk M-1)-\overline{\wt}_M(\textbf{0})} = q^{-(\rk M-1)} \overline{\chi}_M(q).
\]
Thus
\[
	\lim_{T \to \infty} \overline{Z}_M(q^{-1},T^{-1}) = \overline{Z}_M(q^{-1}, 0) = q^{\rk M-1}\overline{\chi}_M(q^{-1}).
\]
The desired result now follows from \autoref{functionalequationforreducedmotiviczetafunction}.
\end{proof}

\section{A recurrence relation for the local motivic zeta function}

In this section we prove \autoref{recurrencerelationmotiviczetafunctionat0}, which gives a recurrence relation satisfied by the local motivic zeta function of a matroid. We note that, aesthetically, \autoref{recurrencerelationmotiviczetafunctionat0} and its proof are quite similar to \autoref{recursionforpoincarepolynomials} and its proof above.

\begin{proof}[Proof of \autoref{recurrencerelationmotiviczetafunctionat0}]
Let $M$ be a matroid with lattice of flats $\cL$ and ground set $E$. If $M$ has a loop, then both sides of the equation in \autoref{recurrencerelationmotiviczetafunctionat0} are equal to 0, so we will assume that $M$ is loopless. If $\cF \in \cN^*(M)$, then \autoref{initialmatroiddecompositionmatroidminorsmaximalbuildingset} implies
\[
	M_\cF = \bigoplus_{F \in \cF} M |F / z_\cF(F) = (M/ z_\cF(E)) \oplus (M | z_\cF(E))_{\cF \setminus \{E\}},
\]
so
\[
	\chi_{M_\cF}(q) = \chi_{M/z_\cF(E)}(q) \cdot \chi_{(M | z_\cF(E))_{\cF \setminus \{E\}}}(q),
\]
where by slight abuse of notation, we let $\cF \setminus \{E\}$ denote the corresponding element of $\cN^*(M | z_\cF(E))$. Thus by \autoref{rationalformulaformotiviczetafunctionmaximalbuildingset},
\begin{align*}
	&q^{\rk M} Z_{M}^0(q,T) = \sum_{\cF \in \cN^*(M)} \chi_{M_\cF}(q) \prod_{F \in \cF} \frac{q^{-\rk F} T^{\# F}}{1-q^{-\rk F}T^{\# F}} \\
	&= \chi_{M}(q) \frac{q^{-\rk E} T^{\# E}}{1 - q^{-\rk E}T^{\# E}} + \sum_{F \in \widehat\cL} \,\sum_{ \substack{\cF \in \cN^*(M) \\ z_\cF(E) = F}} \chi_{M_\cF}(q) \prod_{G \in \cF} \frac{q^{-\rk G} T^{\# G}}{1-q^{-\rk G}T^{\# G}}\\
	&= \chi_M(q)\frac{q^{-\rk M} T^{\# E}}{1 - q^{-\rk M}T^{\# E}}\\
	&\qquad+ \sum_{F \in \widehat\cL} \chi_{M/F}(q) \frac{q^{-\rk M} T^{\# E}}{1 - q^{-\rk M}T^{\# E}} \sum_{\cF \in \cN^*(M | F)} \chi_{(M | F)_{\cF}}(q) \prod_{G \in \cF} \frac{q^{-\rk G} T^{\# G}}{1-q^{-\rk G}T^{\# G}}\\
	&= (q-1)\frac{q^{-\rk M} T^{\# E}}{1 - q^{-\rk M}T^{\# E}} \left( \barchi_M(q) + \sum_{F \in \widehat\cL} \barchi_{M/F}(q) q^{\rk(M | F)} Z_{M|F}^0 (q, T) \right).
\end{align*}
\end{proof}

\section{The topological zeta function of a matroid}
\label{Sec:Top}

By \cite[Chapter 7 Proposition 3.1.4]{CNS}, there exists a ring morphism
\[
	\mu_{\topo} \colon \Z[q^{\pm 1}]\llbracket T \rrbracket_\rat \to \Q(s),
\]
such that
\[
	\mu_{\topo}(q) = 1
\]
and
\[
	\mu_{\topo} \left( (q-1) \frac{q^b T^a}{1-q^bT^a} \right) = \frac{1}{as - b}
\]
for all $(a,b) \in (\Z_{>0}, \Z)$.  Recall that the topological zeta function of a matroid $M$ is defined to be $Z_M^\topo(s) = \mu_\topo(Z_M(q,T)) \in \Q(s)$. In this section, we will prove \autoref{taylorcoefficientsoftopologicalzetafunction} and \autoref{ourtopologicalspecializestovdv}.

\subsection{Taylor series of the topological zeta function}

We begin by showing that the topological zeta function of a matroid is a specialization of the local motivic zeta function of that matroid.

\begin{proposition}
\label{topologicalzetafunctionisspecializationoflocalmotiviczetafunction}
Let $M$ be a matroid. Then
\[
	Z_M^\topo(s) = \mu_\topo(Z_M^0(q,T)).
\]
\end{proposition}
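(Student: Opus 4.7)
The plan is to reduce the statement to a direct algebraic consequence of \autoref{relationshipbetweenvariousmotiviczetafunctions} together with the fact that $\mu_\topo(q-1) = 0$.

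First, I would subtract the two identities in \autoref{relationshipbetweenvariousmotiviczetafunctions}. Since
\[
    \frac{1}{1 - q^{-\rk M}T^{\# E}} - \frac{q^{-\rk M}T^{\# E}}{1 - q^{-\rk M}T^{\# E}} = 1,
\]
the subtraction collapses to the clean identity
\[
    Z_M(q,T) - Z_M^0(q,T) = q^{-1}(q-1)\, \overline{Z}_M(q,T).
\]
Before applying $\mu_\topo$, I would note that $\overline{Z}_M(q,T)$ lies in $\Z[q^{\pm 1}]\llbracket T \rrbracket_\rat$: by the formula in \autoref{rationalformulaformotiviczetafunctionmaximalbuildingset}, and the remark after \autoref{initialmatroiddecompositionmatroidminorsmaximalbuildingset} that $\overline{\chi}_{M_\cF}(q)$ is divisible by $(q-1)^{\#\cF}$ for $\cF \in \cN^\circ(M)$, each ratio $\overline{\chi}_{M_\cF}(q)/(q-1)^{\#\cF}$ is a genuine element of $\Z[q^{\pm 1}]$, so $\overline{Z}_M(q,T)$ is a $\Z[q^{\pm 1}]$-linear combination of products of the generators of $\Z[q^{\pm 1}]\llbracket T \rrbracket_\rat$. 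Consequently $q^{-1}(q-1)\,\overline{Z}_M(q,T) \in \Z[q^{\pm 1}]\llbracket T \rrbracket_\rat$ as well.

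To finish, I would apply the ring morphism $\mu_\topo$ to both sides of the displayed identity. Since $\mu_\topo$ is a ring homomorphism with $\mu_\topo(q) = 1$, we have $\mu_\topo(q-1) = 0$, hence
\[
    \mu_\topo(Z_M(q,T)) - \mu_\topo(Z_M^0(q,T)) = \mu_\topo\bigl(q^{-1}(q-1)\bigr)\, \mu_\topo\bigl(\overline{Z}_M(q,T)\bigr) = 0.
\]
By definition $Z_M^\topo(s) = \mu_\topo(Z_M(q,T))$, so this gives $Z_M^\topo(s) = \mu_\topo(Z_M^0(q,T))$, as desired.

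There is essentially no obstacle here; the only delicate point is ensuring that $\overline{Z}_M(q,T)$ actually lies in the domain of $\mu_\topo$, which is handled by the divisibility observation recorded in \autoref{preliminaries}. The argument is entirely formal once \autoref{relationshipbetweenvariousmotiviczetafunctions} is in hand.
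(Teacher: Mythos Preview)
Your argument is correct and takes a somewhat different route from the paper's. The paper applies $\mu_\topo$ directly to the rational formula for $Z_M(q,T)$ from \autoref{rationalformulaformotiviczetafunctionmaximalbuildingset}, obtaining a sum over all $\cF \in \cN(M)$, and then shows term-by-term that each summand with $\cF \in \cN^\circ(M)$ vanishes because $\chi_{M_\cF}(q)/(q-1)^{\#\cF}$ carries an extra factor of $(q-1)$ (coming from $M_\cF = M_{\cF \cup \{E\}}$); what remains is exactly the sum over $\cN^*(M)$, which is $\mu_\topo(Z_M^0(q,T))$. You instead package this same underlying divisibility into the single identity $Z_M(q,T) - Z_M^0(q,T) = q^{-1}(q-1)\,\overline{Z}_M(q,T)$ obtained from \autoref{relationshipbetweenvariousmotiviczetafunctions}, and then kill the right-hand side in one stroke via $\mu_\topo(q-1)=0$. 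Your approach is a bit cleaner and more structural, avoiding any manipulation of the sums; the paper's approach is more hands-on and makes explicit precisely which terms die. Both ultimately rest on the same fact that the ``extra'' terms are divisible by $q-1$.
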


\begin{proof}
By \autoref{rationalformulaformotiviczetafunctionmaximalbuildingset},
\[
	Z_M^\topo(s) = \mu_\topo(Z_M(q,T)) = \sum_{\cF \in \cN(M)} \frac{\chi_{M_\cF}(q)}{(q-1)^{\#\cF}} \Big|_{q=1} \prod_{F \in \cF} \frac{1}{(\#F)s + \rk F}.
\]
Let $E$ be the ground set of $M$. For any $\cF \in \cN^\circ(M)$,
\[
	\frac{\chi_{M_\cF}(q)}{(q-1)^{\#\cF}} \Big|_{q=1} = \left((q-1)\frac{\chi_{M_{\cF \cup \{E\}}}(q)}{(q-1)^{\#(\cF \cup \{E\})}}\right) \Big|_{q=1} = 0.
\]
Therefore
\[
	Z_M^\topo(s) = \sum_{\cF \in \cN^*(M)} \frac{\chi_{M_\cF}(q)}{(q-1)^{\#\cF}} \Big|_{q=1} \prod_{F \in \cF} \frac{1}{(\#F)s + \rk F} = \mu_\topo(Z_M^0(q,T)).
\]
\end{proof}

\autoref{recurrencerelationmotiviczetafunctionat0} and \autoref{topologicalzetafunctionisspecializationoflocalmotiviczetafunction} immediately imply the next corollary, which gives a recurrence relation satisfied by the topological zeta function of a matroid.

\begin{corollary}
\label{recurrencerelationtopologicalzetafunction}
Let $M$ be a matroid with ground set $E$ and lattice of flats $\cL$. Then
\[
	Z_M^\topo(s) = \frac{1}{(\# E)s + \rk M} \left( \barchi_M(1) + \sum_{F \in \widehat\cL} \barchi_{M/F}(1) Z_{M|F}^\topo(s) \right).
\]
\end{corollary}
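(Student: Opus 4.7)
The plan is to apply the ring homomorphism $\mu_\topo \colon \Z[q^{\pm 1}]\llbracket T \rrbracket_\rat \to \Q(s)$ to both sides of the identity in \autoref{recurrencerelationmotiviczetafunctionat0}. Once this is justified, the corollary reduces to bookkeeping the image of each factor, so the main (modest) task is to make sure that every piece of the right-hand side actually lies in $\Z[q^{\pm 1}]\llbracket T \rrbracket_\rat$, which follows from \autoref{rationalformulaformotiviczetafunctionmaximalbuildingset} and the observation that the prefactor $(q-1)q^{-\rk M}T^{\# E}/(1-q^{-\rk M}T^{\# E})$ is a generator of this subalgebra.

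On the left-hand side, $\mu_\topo(q^{\rk M} Z_M^0(q,T)) = \mu_\topo(q)^{\rk M} \, \mu_\topo(Z_M^0(q,T)) = 1 \cdot Z_M^\topo(s)$ by the defining property $\mu_\topo(q) = 1$ combined with \autoref{topologicalzetafunctionisspecializationoflocalmotiviczetafunction}. For the right-hand side, the prefactor $(q-1)q^{-\rk M} T^{\# E}/(1-q^{-\rk M}T^{\# E})$ is the generator $(q-1)q^b T^a/(1-q^b T^a)$ with $a = \# E$ and $b = -\rk M$, so by the defining formula for $\mu_\topo$ it maps to $1/((\# E)s + \rk M)$. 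Inside the parentheses, $\mu_\topo(\barchi_M(q)) = \barchi_M(1)$, and for each $F \in \widehat\cL$ the product $\barchi_{M/F}(q)\, q^{\rk(M|F)}\, Z_{M|F}^0(q,T)$ maps to $\barchi_{M/F}(1) \cdot 1 \cdot Z_{M|F}^\topo(s)$, again using $\mu_\topo(q) = 1$ and \autoref{topologicalzetafunctionisspecializationoflocalmotiviczetafunction}.

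Putting these images together yields exactly the claimed recurrence. There is no real obstacle: essentially all of the work is packaged into \autoref{recurrencerelationmotiviczetafunctionat0} and \autoref{topologicalzetafunctionisspecializationoflocalmotiviczetafunction}, which is why the authors describe the corollary as an immediate consequence of these two results.
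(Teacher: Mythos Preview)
Your proof is correct and follows exactly the approach indicated in the paper: apply the ring homomorphism $\mu_\topo$ to both sides of \autoref{recurrencerelationmotiviczetafunctionat0}, and use \autoref{topologicalzetafunctionisspecializationoflocalmotiviczetafunction} together with $\mu_\topo(q)=1$ to identify each factor. The paper states only that the corollary is an immediate consequence of these two results, and your write-up simply spells out that immediate deduction.
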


We may now complete the proof of \autoref{taylorcoefficientsoftopologicalzetafunction}.

\begin{proof}[Proof of \autoref{taylorcoefficientsoftopologicalzetafunction}]
Let $M$ be a loopless matroid with ground set $E$. We begin by proving that $Z_M^\topo(0) = 1$ by induction on the rank of $M$. \autoref{recurrencerelationtopologicalzetafunction} implies
\begin{align*}
	Z_M^\topo(0) &= \frac{1}{\rk M} \left( \barchi_M(1) + \sum_{F \in \widehat\cL} \barchi_{M/F}(1) Z_{M|F}^\topo(0) \right)\\
	&= \frac{1}{\rk M} \left( \barchi_M(1) + \sum_{F \in \widehat\cL} \barchi_{M/F}(1) \right)\\
	&= 1,
\end{align*}
where the last equality follows from \autoref{redcharsub}. Next we prove that
\[
	\left(\frac{d}{ds} Z^{\topo}_M(s)\right) \Big|_{s = 0} = -\# E.
\]
By \autoref{recurrencerelationtopologicalzetafunction},
\[
	\frac{d}{ds} Z_M^\topo(s) = \frac{1}{(\# E)s + \rk M} \left( (-\# E) Z_M^\topo(s) + \sum_{F \in \widehat\cL} \barchi_{M/F}(1)\frac{d}{ds}Z_{M|F}^{\topo}(s) \right).
\]
Thus by induction on the rank of $M$, we have
\begin{align*}
	\left(\frac{d}{ds} Z^{\topo}_M(s)\right) \Big|_{s = 0} &= \frac{1}{\rk M} \left( -\# E - \sum_{F \in \widehat\cL} (\# F)\barchi_{M/F}(1) \right)\\
	&= \frac{-\# E - (\#E)(\rk M - 1)}{\rk M}\\
	&= -\# E,
\end{align*}
where the second equality follows from \autoref{atomsum}.
\end{proof}

\subsection{The topological zeta function of van der Veer}

In this section we prove \autoref{ourtopologicalspecializestovdv}, which states that in the case of simple matroids, our definition of the topological zeta function of a matroid is equivalent to the definition introduced by van der Veer in \cite{vanderVeer}. To do this, it will be useful to define the following polynomials.

\begin{definition}
\label{def:EP}
Let $M$ be a matroid. For each $\cF \in \cN(M)$, we define the \textbf{Euler-Poincar\'{e} polynomial of $M$ at $\cF$} to be
\[
	P_M^\cF(q) = \sum_{\substack{\cG \in \cN(M) \\ \cG \supseteq \cF}} \frac{\chi_{M_\cG}(q)}{(q-1)^{\# \cG}} \in \Z[q].
\]
We define the \textbf{Euler-Poincar\'{e} polynomial} of $M$ to be
\[
	P_M(q) = \sum_{\cF \in \cN^*(M)} \frac{\chi_{M_\cF}(q)}{(q-1)^{\# \cF}} \in \Z[q].
\]
In other words, $P_M(q)$ is the Euler-Poincar\'{e} polynomial of $M$ at the flag consisting of only the ground set of $M$.
\end{definition}

\begin{remark}
\label{poincareeulerpoincarerelationshipremark}
If $\cF \in \cN^\circ(M)$, then the definitions immediately imply that $P_M^{\cF}(q) = q\overline{P}_M^\cF(q)$ and $P_M^{\cF \cup \{E\}}(q) = \overline{P}_M^{\cF}(q)$, where $E$ is the ground set of $M$. See \autoref{realizablegeometryeulerpoincarepolynomial} below for a geometric interpretation of this fact.

Although this implies that the Poincar\'{e} polynomials and the Euler-Poincar\'{e} polynomials give equivalent information, they evoke different geometric intuitions.
\end{remark}

\begin{remark}
\label{realizablegeometryeulerpoincarepolynomial}
Let $\cA$ be a hyperplane arrangement realizing $M$, let $Y$ be the wonderful model of $\cA$ with respect to the maximal building set, and let $D_{\cF}$ be the closure of the stratum in $Y$ corresponding to $\cF$.  By \autoref{Prop:Poincare}, $P_M^\cF(\bL) = [D_{\cF}] \in K_0(\Var_\C)$, so $P_M^{\cF}(q^2)$ is the Euler-Poincar\'{e} polynomial of $D_\cF$.

Furthermore, suppose that $\cF \in \cN^\circ(M)$, and let $D_{\cF \cup \{E\}}$ be the closure of the stratum in $Y$ corresponding to $\cF \cup \{E\}$, where $E$ is the ground set of $M$. Let $\overline{Y}$ be the wonderful model, with respect to the maximal building set, of the projectivization of $\cA$. Let $\overline{D}_{\cF}$ be the closure of the stratum in $\overline{Y}$ corresponding to $\cF$. By \cite[Theorems 4.1 and 4.2]{DeConciniProcesi}, $D_{\cF}$ is isomorphic to a line bundle on $\overline{D}_\cF$ and $D_{\cF \cup \{E\}}$ is isomorphic to $\overline{D}_\cF$. Thus $P_M^{\cF}(\bL) = \bL\overline{P}_M^\cF(\bL)$ and $P_M^{\cF \cup \{E\}}(\bL) = \overline{P}_M^{\cF}(\bL)$ by \autoref{realizablegeometrypoincarepolynomial}.
\end{remark}

We will now set notation for certain polynomials whose evaluations at 1 can be used to express van der Veer's topological zeta function. For any matroid $M$ and flag $\cF \in \cN^*(M)$, let $H_M^{\cF}(q)$ denote the polynomial
\[
	H_M^\cF(q) = \sum_{\substack{ \cG \in \cN(M) \\ \cG \cup \cF \in \cN(M)}} \prod_{G \in \cG} \left( [\rk G - \rk z_{\cG \cup \cF}(G)]_q - 1\right) \in \Z[q].
\]
For any flag $\cF \in \cN^\circ(M)$, let $H_M^{\cF}(q)$ denote the polynomial
\[
	H_M^\cF(q) = q H_M^{\cF \cup \{E\}}(q),
\]
where $E$ is the ground set of $M$.  These polynomials are generalizations of the polynomials $H_M(q)$ defined in \autoref{proofofpoincarecohomologysection}.  Specifically, we have
\[
	H_M(q) = H_M^{\{E\}}(q).
\]
If $M$ is a simple matroid with lattice of flats $\cL$ and $Z_\cL(s)$ is the topological zeta function of $\cL$ in the sense of \cite[Definition 1]{vanderVeer}, then \cite[Proposition 1]{vanderVeer} implies
\[
	Z_{\cL}(s) = \sum_{\cF \in \cN(M)} \, \sum_{\substack{ \cG \in \cN(M) \\ \cG \supseteq \cF}} (-1)^{\#\cG - \#\cF} H_M^\cG(1) \prod_{F \in \cF} \frac{1}{(\# F)s + \rk F}.
\]
At the same time, \autoref{rationalformulaformotiviczetafunctionmaximalbuildingset} and M\"{o}bius inversion imply
\begin{align*}
	Z_M^\topo(s) &= \sum_{\cF \in \cN(M)} \frac{\chi_{M_\cF}(q)}{(q-1)^{\#\cF}} \Big|_{q=1} \prod_{F \in \cF} \frac{1}{(\#F)s + \rk F}\\
	&= \sum_{\cF \in \cN(M)} \,\sum_{\substack{\cG \in \cN(M)\\ \cG \supseteq \cF}} (-1)^{\# \cG - \# \cF}P_M^\cG(1) \prod_{F \in \cF} \frac{1}{(\#F)s + \rk F}.
\end{align*}
\autoref{ourtopologicalspecializestovdv} therefore follows from the next proposition.

\begin{proposition}
\label{PequalsHmaximalbuildingset}
Let $M$ be a loopless matroid. Then for any $\cF \in \cN(M)$,
\[
	P_M^\cF(q) = H_M^\cF(q).
\]
\end{proposition}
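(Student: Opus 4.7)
The strategy is to reduce the identity, for arbitrary $\cF \in \cN(M)$, to the already-established identity $\overline{P}_N(q) = H_N(q)$ for loopless matroids $N$ (\autoref{poincarepolynomialrelatedtomonomialbasis}), by establishing a multiplicative decomposition of both sides over the ``interval minors'' determined by $\cF$.

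The main case is $\cF \in \cN^*(M)$; write $\cF = \{F_1 \subsetneq \cdots \subsetneq F_k\}$ with $F_k = E$, and set $F_0 = \cl(\emptyset)$ and $M_i = M|F_i/F_{i-1}$. For the polynomial $P_M^\cF(q)$, \autoref{decomposeinitialdegenerationofrefinementmaximalbuildingset} applied to any $\cG \supseteq \cF$ in $\cN^*(M)$ yields $M_\cG = \bigoplus_{i=1}^k (M_i)_{\cG_{F_i}}$ where $\cG_{F_i} = \{G \in \cG \mid F_{i-1} \subsetneq G \subseteq F_i\}$. Since $\chi$ is multiplicative over direct sums of matroids and the $\cG_{F_i}$ vary independently over $\cN^*(M_i)$ as $\cG$ varies, this factors the defining sum:
\[
    P_M^\cF(q) = \prod_{i=1}^{k} P_{M_i}^{\{E_i\}}(q) = \prod_{i=1}^k P_{M_i}(q),
\]
where $E_i$ denotes the ground set of $M_i$. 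For $H_M^\cF(q)$, the condition that $\cG \cup \cF \in \cN(M)$ forces every element of $\cG$ to lie in exactly one half-open interval $(F_{i-1}, F_i]$, giving a bijection between such $\cG$ and tuples of arbitrary flags $\cH_i \in \cN(M_i)$ via the identification $[F_{i-1}, F_i] \cong \cL(M_i)$. The key verification is that for $G \in \cG$ corresponding to $G' \in \cH_i$, the predecessor $z_{\cG \cup \cF}(G)$ equals $z_{\cH_i}(G')$ (with $F_{i-1}$ being sent to $\cl(\emptyset)$ in $M_i$), and likewise $\rk G - \rk z_{\cG \cup \cF}(G) = \rk^{(i)}G' - \rk^{(i)}z_{\cH_i}(G')$. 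Granting this, the product factors and summing gives $H_M^\cF(q) = \prod_{i=1}^k H_{M_i}(q)$. By \autoref{poincarepolynomialrelatedtomonomialbasis}, $H_{M_i}(q) = \overline{P}_{M_i}(q)$; by \autoref{poincareeulerpoincarerelationshipremark} with empty flag in $M_i$, $P_{M_i}(q) = \overline{P}_{M_i}(q)$. Thus $P_{M_i}(q) = H_{M_i}(q)$ for each $i$, which completes this case.

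The case $\cF \in \cN^\circ(M)$ reduces immediately to the previous case: $\cF \cup \{E\} \in \cN^*(M)$, so by \autoref{poincareeulerpoincarerelationshipremark} we have $P_M^\cF(q) = q\,\overline{P}_M^\cF(q) = q\, P_M^{\cF \cup \{E\}}(q) = q\, H_M^{\cF \cup \{E\}}(q) = H_M^\cF(q)$, where the last equality is the definition of $H_M^\cF(q)$ for $\cF \in \cN^\circ(M)$. The main obstacle is the combinatorial bookkeeping in the factorization of $H_M^\cF(q)$: one must check carefully that the predecessor function and rank function in $M$ restrict compatibly to each of the minors $M_i$, including boundary conventions (when $G$ is the minimum of $\cG_i$, or equals $F_i$). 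This is routine but requires attention to the definition of $z$ when the corresponding flag has no smaller element.
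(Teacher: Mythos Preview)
Your proposal is correct and follows essentially the same route as the paper: both arguments factor $P_M^\cF(q)$ and $H_M^\cF(q)$ as products over the interval minors $M|F/z_\cF(F)$ for $F \in \cF$, then invoke the base case $\overline{P}_N(q) = H_N(q)$ from \autoref{poincarepolynomialrelatedtomonomialbasis}, and finally reduce $\cF \in \cN^\circ(M)$ to $\cF \cup \{E\} \in \cN^*(M)$ via \autoref{poincareeulerpoincarerelationshipremark} and the definition of $H_M^\cF$. The only cosmetic difference is that the paper cites the already-proved factorization \eqref{poincareatflagproductofpoincare} for $P_M^\cF$, whereas you re-derive it directly from \autoref{decomposeinitialdegenerationofrefinementmaximalbuildingset}; the combinatorial bookkeeping you flag for $H_M^\cF$ is exactly the computation the paper carries out.
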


\begin{proof}
Let $E$ be the ground set of $M$. We first prove the special case $\cF = \{E\}$. By \autoref{poincarepolynomialrelatedtomonomialbasis} and \autoref{poincareeulerpoincarerelationshipremark},
\[
	P_M^{\{E\}}(q) = \overline{P}_M(q) = H_M(q) = H_M^{\{E\}}(q).
\]
Now we will prove the case where $\cF \in \cN^*(M)$. In the proof of \autoref{numericalPoincaredualitymaximalbuildingset}, we showed in \eqref{poincareatflagproductofpoincare} that
\[
	P_M^\cF(q) = \overline{P}_M^{\cF \setminus \{E\}}(q) = \prod_{F \in \cF} \overline{P}_{M | F / z_\cF(F)}(q) = \prod_{F \in \cF} P_{M | F / z_{\cF}(F)} (q).
\]
Also
\begin{align*}
	H_M^\cF(q) &= \sum_{\substack{ \cG \in \cN(M) \\ \cG \cup \cF \in \cN(M)}} \prod_{G \in \cG} \left( [\rk G - \rk z_{\cG \cup \cF}(G)]_q - 1\right)\\
	&= \prod_{F \in \cF} \, \sum_{\cG \in \cN(M |F / z_{\cF}(F))} \, \prod_{G \in \cG} \left( [\rk G - \rk_{\cG \cup \{F\}}(G)]_q - 1 \right)\\
	&= \prod_{F \in \cF} H_{M | F / z_{\cF}(F)}(q).
\end{align*}
Therefore by the case where the flag consists only of the ground set, we have
\[
	P_M^\cF(q) = \prod_{F \in \cF} P_{M | F / z_{\cF}(F)} (q) = \prod_{F \in \cF} H_{M | F / z_{\cF}(F)}(q) = H_M^\cF(q).
\]
Finally when $\cF \in \cN^\circ(M)$, \autoref{poincareeulerpoincarerelationshipremark} implies
\[
	P_M^\cF(q) = q P_M^{\cF \cup \{E\}}(q) = q H_M^{\cF \cup \{E\}}(q) = H_M^\cF(q),
\]
and we have completed the proof in all cases.
\end{proof}

\section{Examples}
\label{sectionwithexamples}

The motivic zeta function of a matroid $M$ determines the characteristic polynomial of $M$, because
\[
	\chi_M(q) = q^{\rk M} Z_M(q,0) .
\]
In this section, we show by example that the motivic zeta function is in fact a strictly finer matroid invariant than the characteristic polynomial. We also exhibit a pair of non-isomorphic matroids with the same motivic zeta function.

\begin{example}
Consider the rank $4$ simple matroids $M_1$ and $M_2$ in \autoref{Fig:SameTutte}.

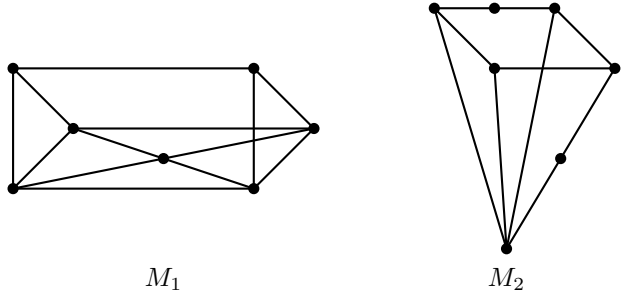
\begin{figure}[h]
\begin{tikzpicture}[thick, scale=0.8]

\begin{scope}[grow=right, baseline]

\draw [color=black, fill=black] (0,0) circle (0.75mm);
\draw [color=black, fill=black] (0,2) circle (0.75mm);
\draw [color=black, fill=black] (4,0) circle (0.75mm);
\draw [color=black, fill=black] (4,2) circle (0.75mm);
\draw [color=black, fill=black] (1,1) circle (0.75mm);
\draw [color=black, fill=black] (5,1) circle (0.75mm);
\draw [color=black, fill=black] (2.5,0.5) circle (0.75mm);

\draw (0,0)--(4,0);
\draw (0,0)--(0,2);
\draw (0,2)--(4,2);
\draw (4,0)--(4,2);
\draw (1,1)--(5,1);
\draw (0,0)--(1,1);
\draw (0,2)--(1,1);
\draw (4,0)--(5,1);
\draw (4,2)--(5,1);
\draw (1,1)--(4,0);
\draw (0,0)--(5,1);

\node at (2.5,-1.5) [] (M1) {$M_1$};

\draw [color=black, fill=black] (7,3) circle (0.75mm);
\draw [color=black, fill=black] (8,3) circle (0.75mm);
\draw [color=black, fill=black] (9,3) circle (0.75mm);
\draw [color=black, fill=black] (8,2) circle (0.75mm);
\draw [color=black, fill=black] (10,2) circle (0.75mm);
\draw [color=black, fill=black] (8.2,-1) circle (0.75mm);
\draw [color=black, fill=black] (9.1,0.5) circle (0.75mm);

\draw (7,3)--(9,3);
\draw (7,3)--(8,2);
\draw (8,2)--(10,2);
\draw (9,3)--(10,2);
\draw (7,3)--(8.2,-1);
\draw (9,3)--(8.2,-1);
\draw (8,2)--(8.2,-1);
\draw (10,2)--(8.2,-1);

\node at (8.2,-1.5) [] (M2) {$M_2$};

\end{scope}
\end{tikzpicture}

\caption{Two simple matroids with the same characteristic polynomial but different motivic zeta functions}
\label{Fig:SameTutte}
\end{figure}

\noindent These matroids have the same characteristic polynomial,
\[
	\chi_{M_1}(q) = \chi_{M_2}(q) = q^4 - 7q^3 + 19q^2 - 23q + 10.
\]
Indeed, they have the same Tutte polynomial.  However,
\[
	Z^{\topo}_{M_1}(s) = \frac{-120s^6 + 20s^5 + 120s^4 - 129s^3 - 29s^2 + 162s + 72}{(s + 1)^3 (3s + 2) (4s + 3) (5s + 3) (7s + 4)},
\]
and
\[
	Z^{\topo}_{M_2}(s) = \frac{-120s^6 + 22s^5 + 120s^4 - 129s^3 - 29s^2 + 162s + 72}{(s + 1)^3 (3s + 2) (4s + 3) (5s + 3) (7s + 4)}.
\]
Since $Z^{\topo}_{M_1}(s) \neq Z^{\topo}_{M_2}(s)$, it follows that $Z_{M_1}(q,T) \neq Z_{M_2}(q,T)$.

\end{example}

\begin{example}
Consider the rank 3 simple matroids $N_1$ and $N_2$ in \autoref{Fig:SameZeta}.

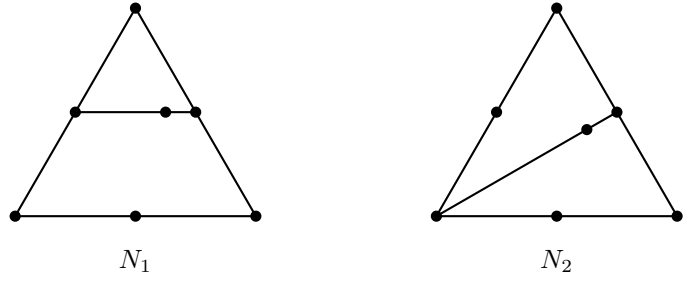
\begin{figure}[h]
\begin{tikzpicture}[thick, scale=0.8]

\begin{scope}[grow=right, baseline]

\draw [color=black, fill=black] (0,0) circle (0.75mm);
\draw [color=black, fill=black] (1,1.73) circle (0.75mm);
\draw [color=black, fill=black] (2,3.46) circle (0.75mm);
\draw [color=black, fill=black] (3,1.73) circle (0.75mm);
\draw [color=black, fill=black] (2,0) circle (0.75mm);
\draw [color=black, fill=black] (4,0) circle (0.75mm);
\draw [color=black, fill=black] (2.5 ,1.73) circle (0.75mm);

\draw (0,0)--(4,0);
\draw (0,0)--(2,3.46);
\draw (4,0)--(2,3.46);
\draw (1,1.73)--(3,1.73);

\node at (2,-.75) [] (N1) {$N_1$};

\draw [color=black, fill=black] (7,0) circle (0.75mm);
\draw [color=black, fill=black] (8,1.73) circle (0.75mm);
\draw [color=black, fill=black] (9,3.46) circle (0.75mm);
\draw [color=black, fill=black] (10,1.73) circle (0.75mm);
\draw [color=black, fill=black] (9,0) circle (0.75mm);
\draw [color=black, fill=black] (11,0) circle (0.75mm);
\draw [color=black, fill=black] (9.5,1.44) circle (0.75mm);

\draw (7,0)--(11,0);
\draw (7,0)--(9,3.46);
\draw (11,0)--(9,3.46);
\draw (7,0)--(10,1.73);

\node at (9,-.75) [] (N2) {$N_2$};

\end{scope}
\end{tikzpicture}

\caption{Two non-isomorphic simple matroids with the same motivic zeta function}
\label{Fig:SameZeta}
\end{figure}

\noindent The matroids $N_1$ and $N_2$ are not isomorphic.  For instance, $N_1$ contains two 3-element flats of rank $2$ with empty intersection, whereas no such pair of flats exists in $N_2$. Nevertheless, $Z_{N_1}(q,T)$ and $Z_{N_2}(q,T)$ are both equal to
\begin{align*}
\frac{1}{(q - T)^2 (q^2 - T^3) (q^3 - T^7)} (q-1)\Big(&3q^4T^3 - q^3T^4 - 11q^2T^5 + q^6 + 5q^5T\\
&+ 3q^4T^2 - 6q^3T^3 + 18q^2T^4 + 6qT^5 - 6q^5\\
&- 18q^4T + 6q^3T^2 - 3q^2T^3 - 5qT^4 - T^5\\
&+ 11q^4 + q^3T - 3q^2T^2\Big).
\end{align*}

\end{example}

\section{Building sets and motivic zeta functions}
\label{arbitrarybuildingsets}

The main results of this paper may be viewed as statements in terms of the maximal building set in the lattice of flats of a matroid.  In this section, we indicate how our results may be extended to the more general framework of arbitrary building sets.  For an example of the utility of such generalizations, consider \autoref{definitionmotiviczetafunctions}.  One might reasonably ask why we choose to define the zeta functions in this way, as opposed to directly defining them as the rational functions appearing in \autoref{rationalformulaformotiviczetafunctionmaximalbuildingset}.  An answer is given by \autoref{rationalformulaformotiviczetafunction}: the definition of a zeta function as a lattice sum is more ``intrinsic,'' while the expression of it as a rational function depends on a choice of building set.

\subsection{Building sets}

Before giving the generalizations of our results, we first survey the basic definitions and properties of building sets.  Building sets and nested sets for (semi)lattices were introduced by Feichtner and Kozlov \cite{FeichtnerKozlov} to generalize notions used in De Concini and Procesi's work on wonderful compactifications \cite{DeConciniProcesi}.

\begin{definition}
Let $M$ be a matroid, and let $\cL$ be its lattice of flats. A subset $\sG$ of $\cL_{> \cl(\emptyset)}$ is a \textbf{building set} if for any $X \in \cL_{> \cl(\emptyset)}$ the set $\max \sG_{\leq X} = \{G_1, \ldots, G_k\}$ satisfies the following: there is an isomorphism of partially ordered sets
\[
	\varphi_X \colon \prod_{i=1}^k \ [\cl(\emptyset), G_i] \ \xrightarrow{\sim} \ [\cl(\emptyset), X]
\]
such that $\varphi_X(\cl(\emptyset), \ldots, G_i, \ldots, \cl(\emptyset)) = G_i$ for $i = 1, \ldots, k$.  The set $\fact_{\sG}(X) = \max \sG_{\leq X}$ is called the set of \textbf{factors} of $X$ in $\sG$.
\end{definition}

We note that there is always a maximal building set $\sG_{\max} = \cL_{> \cl(\emptyset)}$.  There is also a minimal building set $\sG_{\min}$, which consists of all $G \in \cL_{> \cl(\emptyset)}$ such that the interval $[\cl(\emptyset), G]$ cannot be decomposed as a product of smaller intervals.  That is, a flat $G$ is in $\sG_{\min}$ if and only if the restriction $M|G$ is a connected matroid of positive rank.

\begin{definition}
Let $\sG$ be a building set in $\cL$. A subset $\cS \subseteq \sG$ is \textbf{nested} if, for any set of pairwise incomparable elements $G_1, \ldots, G_t \in \cS$ with $t \geq 2$, the join $G_1 \vee \cdots \vee G_t$ does not belong to $\sG$.  The nested sets in $\sG$ form an abstract simplicial complex, the \textbf{nested set complex}, which we denote by $\cN(M, \sG)$.
\end{definition}

\noindent We set
\[
	\cN^*(M, \sG) = \{ \cS \in \cN(M, \sG) \mid \fact_{\sG}(E) \subseteq \cS \}
\]
and
\[
	\cN^{\circ}(M, \sG) = \cN(M, \sG) \setminus \cN^*(M, \sG).
\]
If $\cS \in \cN(M, \sG)$ is a nested set and $F \in \cS$, then we let $z_{\cS}(F) = \bigvee \cS_{<F}$.

\begin{remark}
A subset $\cF$ of $\cL_{> \cl(\emptyset)}$ is nested with respect to the maximal building set if and only if $\cF$ is a flag of flats. That is,
\[
	\cN(M, \sG_{\max}) = \cN(M), \quad \cN^*(M, \sG_{\max}) = \cN^*(M), \quad \text{and} \quad \cN^{\circ}(M, \sG_{\max}) = \cN^{\circ}(M).
\]
\end{remark}

As in \autoref{BergmanFan}, we have fans
\[
	\Sigma(M, \sG) = \{ \sigma_{\cS} \mid \cS \in \cN(M, \sG) \}
\]
and
\[
	\Sigma^{\circ}(M, \sG) = \{ \sigma_{\cS} \mid \cS \in \cN^{\circ}(M, \sG) \}.
\]
When $M$ is loopless, the support of $\Sigma(M, \sG)$ (resp. $\Sigma^{\circ}(M, \sG)$) is $\Trop(M) \cap \R_{\geq 0}^E$ (resp. $\Trop(M) \cap \partial \R_{\geq 0}^E$) \cite[Theorem 4.1]{FeichtnerSturmfels}.  The fans $\Sigma(M, \sG)$ and $\Sigma^{\circ}(M, \sG)$ are also unimodular by \cite[Proposition~2]{FeichtnerYuzvinsky}.

For each $\cS \in \cN(M, \sG)$, the function $w \mapsto M_w$ is constant on $\relint(\sigma_{\cS})$ \cite[Theorem~4.1]{FeichtnerSturmfels}.  We let $M_{\cS}$ denote the matroid $M_w$ for any $w \in \relint(\sigma_{\cS})$.  It is clear that $M_{\cS} = M_{\cS \cup \fact_{\sG}}(E)$, where $E$ is the ground set of $M$. Feichtner and Sturmfels give the following direct sum decomposition of $M_{\cS}$, generalizing \autoref{initialmatroiddecompositionmatroidminorsmaximalbuildingset}.

\begin{proposition}\cite[Theorem~4.4]{FeichtnerSturmfels}
\label{FSinitialmatroiddecomposition}
Let $M$ be a loopless matroid, let $\sG$ be a building set in $\cL_M$, and let $\cS \in \cN^*(M, \sG)$. Then
\[
	M_{\cS} = \bigoplus_{F \in \cS} M|F/z_{\cS}(F).
\]
\end{proposition}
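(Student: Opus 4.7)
The plan is to extend the proof of \autoref{initialmatroiddecompositionmatroidminorsmaximalbuildingset}, replacing flags of flats by nested sets in $\sG$ and using the building-set axiom in place of the chain structure of flags. Since $w \mapsto M_w$ is constant on $\relint(\sigma_{\cS})$, it suffices to compute $M_w$ at a single convenient point; I would take $w = \sum_{F \in \cS} k_F v_F$ with $k_F > 0$ for every $F \in \cS$. For any basis $B$ of $M$,
\[
    \sum_{e \in B} w_e = \sum_{F \in \cS} k_F\, |B \cap F| \;\leq\; \sum_{F \in \cS} k_F\, \rk_M(F),
\]
with equality if and only if $|B \cap F| = \rk_M(F)$ for every $F \in \cS$. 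Thus the bases of $M_w$ are precisely those bases of $M$ that restrict to a basis of $M|F$ for each $F \in \cS$; I shall call these the \emph{$\cS$-compatible} bases.

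I would then identify the $\cS$-compatible bases with the bases of $M' := \bigoplus_{F \in \cS} M|F/z_{\cS}(F)$. Two ingredients are required: the set-theoretic partition
\[
    E = \bigsqcup_{F \in \cS} (F \setminus z_{\cS}(F)),
\]
and the matroid decomposition $M|z_{\cS}(F) = \bigoplus_{G \in \max \cS_{<F}} M|G$ for each $F \in \cS$. Both reduce to the key structural identity
\[
    \max \cS_{<F} = \fact_{\sG}(z_{\cS}(F)),
\]
since the building-set axiom then supplies the product decomposition $[\cl(\emptyset), z_{\cS}(F)] \cong \prod_{G \in \max \cS_{<F}} [\cl(\emptyset), G]$, which encodes both the disjoint partition of the ground set of $z_{\cS}(F)$ and the direct-sum decomposition of $M|z_{\cS}(F)$. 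Granting this, an induction on $\#\cS$, anchored at the top by the inclusion $\fact_{\sG}(E) \subseteq \cS$ (coming from $\cS \in \cN^*(M, \sG)$), produces the desired bijection: every $\cS$-compatible basis $B$ decomposes as $B = \bigsqcup_{F \in \cS} B_F$ with $B_F := B \cap (F \setminus z_{\cS}(F))$ a basis of $M|F/z_{\cS}(F)$, and conversely any such disjoint union assembles into an $\cS$-compatible basis of $M$.

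The main obstacle is the structural identity $\max \cS_{<F} = \fact_{\sG}(z_{\cS}(F))$, which encodes how the nested-set condition translates into the product decomposition provided by the building set. The inclusion ``$\subseteq$'' should follow from nestedness: when $|\max \cS_{<F}| \geq 2$, the join $z_{\cS}(F)$ lies outside $\sG$, forcing a nontrivial factorization in $\sG$ whose factors must include the elements of $\max \cS_{<F}$ on pain of violating the antichain-join condition in $\cS$. The inclusion ``$\supseteq$'' reflects the definition of $z_{\cS}(F)$ as the join of $\max \cS_{<F}$ together with the product decomposition supplied by the building-set axiom. Once this lemma is secured, the remainder of the argument is bookkeeping over the forest-like structure of $\cS$.
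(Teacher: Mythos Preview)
The paper does not give a proof of this proposition; it is quoted verbatim as \cite[Theorem~4.4]{FeichtnerSturmfels} and used as a black box. So there is nothing in the paper to compare your argument against.

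For what it is worth, your outline is correct and is essentially how one proves the result directly. The identification of the bases of $M_w$ with the $\cS$-compatible bases is immediate, and you have correctly isolated the structural identity $\max \cS_{<F} = \fact_{\sG}(z_{\cS}(F))$ as the crux. Your sketch of that identity is in the right direction but a little vague on the ``$\subseteq$'' side. The clean way to finish it is to note that the factors $H_1,\dots,H_k$ of $X = z_{\cS}(F)$ satisfy $H_i \wedge H_j = \cl(\emptyset)$ for $i \neq j$ (this is what the product isomorphism $\varphi_X$ gives), so every $G \in \sG_{\leq X}$ lies below a \emph{unique} factor. Applying this to the antichain $\max \cS_{<F}$ and using that its join is $X$, each factor $H_k$ is the join of those $G_i$ lying below it; if any such join involved two or more $G_i$, nestedness of $\cS$ would be violated since $H_k \in \sG$. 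Hence the map $G_i \mapsto H_{k(i)}$ is a bijection with $G_i = H_{k(i)}$. Once this is in hand, the partition of $E$ and the direct-sum decomposition follow exactly as you describe.
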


\noindent We note that \autoref{FSinitialmatroiddecomposition} implies that $\chi_{M_{\cS}}(q)$ is divisible by $(q-1)^{\# \cS}$.

\subsection{Building sets and rationality of the motivic zeta functions}

We are now prepared to generalize the results of the paper to arbitrary building sets.  We begin with a generalization of \autoref{rationalformulaformotiviczetafunctionmaximalbuildingset}, which shows that each choice of building set yields a rational formula for the motivic zeta functions of a matroid.

\begin{theorem}
\label{rationalformulaformotiviczetafunction}
Let $M$ be a matroid, and let $\sG$ be a building set in its lattice of flats. Then
\[
	Z_M(q,T) = q^{-\rk M} \sum_{\cS \in \cN(M,\sG)} \frac{\chi_{M_{\cS}}(q)}{(q-1)^{\#\cS}} \prod_{F \in \cS } (q-1) \frac{q^{-\rk F}T^{\#F}}{1 - q^{-\rk F}T^{\#F}},
\]
\[
	Z_M^0(q,T) = q^{-\rk M} \sum_{\cS \in \cN^*(M,\sG)} \frac{\chi_{M_{\cS}}(q)}{(q-1)^{\#\cS}} \prod_{F \in \cS } (q-1) \frac{q^{-\rk F}T^{\#F}}{1 - q^{-\rk F}T^{\#F}},
\]
and
\[
	\overline{Z}_M(q,T) = q^{-(\rk M-1)} \sum_{\cS \in \cN^{\circ}(M,\sG)} \frac{\overline{\chi}_{M_{\cS}}(q)}{(q-1)^{\#\cS}} \prod_{F \in \cS} (q-1) \frac{q^{-\rk F}T^{\#F}}{1 - q^{-\rk F}T^{\#F}}.
\]
\end{theorem}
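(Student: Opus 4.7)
The plan is to mimic the proof of \autoref{rationalformulaformotiviczetafunctionmaximalbuildingset} essentially verbatim, with the only real change being that we decompose the sums defining the zeta functions according to the cones of $\Sigma(M, \sG)$ and $\Sigma^\circ(M, \sG)$ rather than of $\Sigma(M)$ and $\Sigma^\circ(M)$. As in the earlier proof, if $M$ has a loop then both sides vanish, so I would assume throughout that $M$ is loopless.

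The first step is to invoke the two facts which replace the role played previously by the Ardila--Klivans theorem and unimodularity of $\Sigma(M)$: namely, \cite[Theorem~4.1]{FeichtnerSturmfels} (and the analogous statement for the boundary) identifies the supports of $\Sigma(M, \sG)$ and $\Sigma^\circ(M, \sG)$ with $\Trop(M) \cap \R_{\geq 0}^E$ and $\Trop(M) \cap \partial \R_{\geq 0}^E$ respectively, and \cite[Proposition~2]{FeichtnerYuzvinsky} gives unimodularity of these fans. Combined with \autoref{boundaryoforthant}, this allows us to rewrite
\[
    Z_M(q,T) = q^{-\rk M} \sum_{\cS \in \cN(M, \sG)} \ \sum_{w \in \relint(\sigma_\cS) \cap \Z^E} \chi_{M_w}(q) q^{-\wt_M(w)} T^{|w|},
\]
and similarly for $Z_M^0(q,T)$ (summing over $\cN^*(M, \sG)$) and $\overline{Z}_M(q,T)$ (summing over $\cN^\circ(M, \sG)$, using the bijection $\Z^E \cap \partial \R_{\geq 0}^E \xrightarrow{\sim} \Z^E/\Z v_E$ of \autoref{boundaryoforthant}).

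The second step is to evaluate the inner sums. Since $\Sigma(M, \sG)$ is unimodular, for $\cS \in \cN(M, \sG)$ the integer points of $\relint(\sigma_\cS)$ are parametrized bijectively by tuples $(k_F)_{F \in \cS} \in \Z_{>0}^\cS$ via $w = \sum_{F \in \cS} k_F v_F$. The function $w \mapsto M_w$ is constant equal to $M_\cS$ on $\relint(\sigma_\cS)$ by \cite[Theorem~4.1]{FeichtnerSturmfels}, so $\chi_{M_w}(q) = \chi_{M_\cS}(q)$ throughout. Moreover \autoref{weightislinearoncone} applies verbatim to the cone $\sigma_\cS$, so $\wt_M$ is linear on it, and together with the linearity of $w \mapsto |w|$ and the computations $\wt_M(v_F) = \rk F$ and $|v_F| = \#F$ (valid for any flat $F$ of $M$), the inner sum factors as
\[
    \chi_{M_\cS}(q) \prod_{F \in \cS} \sum_{k=1}^\infty \left( q^{-\rk F} T^{\#F} \right)^k = \chi_{M_\cS}(q) \prod_{F \in \cS} \frac{q^{-\rk F} T^{\#F}}{1 - q^{-\rk F} T^{\#F}}.
\]
Inserting the factor $(q-1)^{\#\cS}/(q-1)^{\#\cS}$ yields the claimed formula for $Z_M(q,T)$; the formulas for $Z_M^0(q,T)$ and $\overline{Z}_M(q,T)$ follow in exactly the same way, using \autoref{FSinitialmatroiddecomposition} to ensure that $\overline{\chi}_{M_\cS}(q)/(q-1)^{\#\cS}$ is a polynomial for $\cS \in \cN^\circ(M, \sG)$.

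There is essentially no obstacle to this argument, since every ingredient used in the maximal building set case has a direct analogue in the setting of arbitrary building sets. The only point requiring mild care is the reduced case: for $\cS \in \cN^\circ(M, \sG)$ one should observe $M_\cS = M_{\cS \cup \fact_\sG(E)}$ so that \autoref{FSinitialmatroiddecomposition} ensures the factor $(q-1)^{\#\cS}$ divides $\overline{\chi}_{M_\cS}(q)$, making the expression a well-defined element of $\Z[q^{\pm 1}]\llbracket T\rrbracket_\rat$.
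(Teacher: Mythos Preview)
Your proposal is correct and follows exactly the approach taken in the paper, whose proof reads in its entirety: ``The proof is identical to the proof of \autoref{rationalformulaformotiviczetafunctionmaximalbuildingset}, with the fans $\Sigma(M, \sG)$ and $\Sigma^{\circ}(M, \sG)$ replacing $\Sigma(M)$ and $\Sigma^{\circ}(M)$, respectively.'' Your write-up simply makes explicit the ingredients (supports via \cite[Theorem~4.1]{FeichtnerSturmfels}, unimodularity via \cite[Proposition~2]{FeichtnerYuzvinsky}, and \autoref{weightislinearoncone}) that the paper leaves implicit.
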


\begin{proof}
The proof is identical to the proof of \autoref{rationalformulaformotiviczetafunctionmaximalbuildingset}, with the fans $\Sigma(M, \sG)$ and $\Sigma^{\circ}(M, \sG)$ replacing $\Sigma(M)$ and $\Sigma^{\circ}(M)$, respectively.
\end{proof}

One consequence of \autoref{rationalformulaformotiviczetafunction} is the following. If $M$ is a matroid and $\sG$ is a building set in its lattice of flats, then any additive group homomorphism out of $\Z[q^{\pm 1}]\llbracket T \rrbracket_\rat$ induces a specialization of $Z_M(q,T)$ (resp. $Z_M^0(q,T)$, $\overline{Z}_M(q,T)$. This specialization is equal to a sum over $\cN(M, \sG)$ (resp. $\cN^*(M, \sG)$, $\cN^{\circ}(M, \sG)$), and the value of this alternating sum is independent of the choice of building set $\sG$.  We note that this is analogous to the situation in algebraic geometry, where the motivic Igusa zeta function of a hypersurface $X$ may be used to prove that certain expressions in the Grothendieck ring, which are written in terms of a chosen log resolution of $X$, are in fact independent of the choice of log resolution.

For example, \autoref{rationalformulaformotiviczetafunction} may be combined with the functional equation of \autoref{functionalequationforreducedmotiviczetafunction} to obtain the following generalization of \autoref{alternatingsumofcharactersticpolynomialsgivesbackwardscharacteristicpolynomial}.

\begin{corollary}
\label{alternatingsumarbitrarybuildingset}
Let $M$ be a matroid, and let $\sG$ be a building set in its lattice of flats. Then
\[
	\sum_{\cS \in \cN^{\circ}(M, \sG)} (-1)^{\# \cS} \barchi_{\cS}(q) = q^{\rk M - 1} \barchi_M(q^{-1}).
\]
\end{corollary}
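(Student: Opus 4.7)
The strategy is to mimic exactly the proof of \autoref{alternatingsumofcharactersticpolynomialsgivesbackwardscharacteristicpolynomial}, but replacing the appeal to \autoref{rationalformulaformotiviczetafunctionmaximalbuildingset} with the building-set formula \autoref{rationalformulaformotiviczetafunction}. The functional equation \autoref{functionalequationforreducedmotiviczetafunction} holds for the intrinsically defined $\overline{Z}_M(q,T)$, which is independent of any choice of building set, so it may be combined with whichever rational expression for $\overline{Z}_M(q,T)$ is convenient.

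Concretely, I would apply the $\Z[q^{\pm 1}]$-algebra map $\lim_{T \to \infty}$ defined in \autoref{proofoffunctionalequationsection} to both sides of $\overline{Z}_M(q^{-1}, T^{-1}) = q^{\rk M - 1} \overline{Z}_M(q,T)$. On the left side, since $T \mapsto T^{-1}$ is an involution on $\Z[q^{\pm 1}]\llbracket T \rrbracket_{\rat}$, one has
\[
\lim_{T\to\infty} \overline{Z}_M(q^{-1},T^{-1}) = \overline{Z}_M(q^{-1},0) = q^{\rk M - 1}\,\barchi_M(q^{-1}),
\]
using $\overline{\wt}_M(\mathbf{0})=0$ and $M_{\mathbf{0}} = M$. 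On the right side, I would plug in the rational formula for $\overline{Z}_M(q,T)$ from \autoref{rationalformulaformotiviczetafunction} (using the building set $\sG$), and then use
\[
\lim_{T \to \infty} \left((q-1)\frac{q^{-\rk F} T^{\# F}}{1 - q^{-\rk F} T^{\# F}}\right) = -(q-1)
\]
termwise. Each factor in the product over $F \in \cS$ contributes $-(q-1)$, so the product over $F \in \cS$ of the $(q-1)$-scaled rational functions limits to $(-(q-1))^{\# \cS}$, which cancels the $(q-1)^{\# \cS}$ in the denominator (up to a sign $(-1)^{\# \cS}$). After multiplying through by $q^{\rk M - 1}$, this gives
\[
\lim_{T \to \infty} \bigl(q^{\rk M-1}\overline{Z}_M(q,T)\bigr) = \sum_{\cS \in \cN^{\circ}(M,\sG)} (-1)^{\# \cS}\,\barchi_{M_\cS}(q).
\]
Equating the two limits yields the desired identity.

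There is essentially no obstacle: every ingredient (the functional equation, the building-set rational formula, and the behavior of $\lim_{T \to \infty}$) is already established, and the arbitrary-building-set version requires no new argument beyond substituting $\cN^{\circ}(M,\sG)$ for $\cN^{\circ}(M)$. The only minor subtlety to double-check is that the right-hand side of \autoref{rationalformulaformotiviczetafunction} for $\overline{Z}_M(q,T)$ indeed lies in $\Z[q^{\pm 1}]\llbracket T \rrbracket_{\rat}$ so that $\lim_{T \to \infty}$ applies; this follows because each factor $(q-1)\tfrac{q^{-\rk F}T^{\# F}}{1-q^{-\rk F}T^{\# F}}$ is a defining generator of that subalgebra, and $\barchi_{M_\cS}(q)/(q-1)^{\# \cS}$ is a polynomial in $q$ by \autoref{FSinitialmatroiddecomposition}.
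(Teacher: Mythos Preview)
Your proposal is correct and is exactly the approach the paper indicates: the paper states that \autoref{alternatingsumarbitrarybuildingset} is obtained by combining \autoref{rationalformulaformotiviczetafunction} with the functional equation \autoref{functionalequationforreducedmotiviczetafunction}, which is precisely what you do by running the proof of \autoref{alternatingsumofcharactersticpolynomialsgivesbackwardscharacteristicpolynomial} with the building-set rational formula in place of the maximal-building-set one. Your verification that $\barchi_{M_\cS}(q)/(q-1)^{\#\cS}$ is a polynomial (via \autoref{FSinitialmatroiddecomposition} applied to $\cS \cup \fact_\sG(E)$) and hence that the expression lies in $\Z[q^{\pm 1}]\llbracket T \rrbracket_{\rat}$ is exactly the kind of check the paper leaves implicit.
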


\subsection{Building sets and {P}oincar\'e polynomials}

The Euler-Poincar\'e polynomials $P_M^{\cF}(q)$ defined for the maximal building set are easily generalized to arbitrary building sets.

\begin{definition}
Let $M$ be a matroid on ground set $E$, and let $\sG$ be a building set in its lattice of flats. For a nested set $\cS \in \cN(\sG, M)$, we define the \textbf{$\sG$-Euler-Poincar\'{e} polynomial of $M$ at $\cS$} to be
\[
	P_{M,\sG}^{\cS}(q) = \sum_{\substack{\cT \in \cN(M, \sG) \\ \cT \supseteq \cS}} \frac{\chi_{M_\cT}(q)}{(q-1)^{\# \cT}} \in \Z[q].
\]
We define the \textbf{$\sG$-Euler-Poincar\'{e} polynomial} of $M$ to be
\[
	P_{M,\sG}(q) = P_{M,\sG}^{\fact_{\sG}(E)}(q) = \sum_{\cS \in \cN^*(M,\sG)} \frac{\chi_{M_\cS}(q)}{(q-1)^{\# \cS}} .
\]
\end{definition}

\begin{remark}
If $M$ is a matroid on ground set $E$ and $\cS \in \cN(M, \sG)$, then it follows from the definition that $P_{M,\sG}^{\cS}(q) = q^{\#(\fact_{\sG}(E) \setminus \cS)} P_{M, \sG}^{\cS \cup \fact_{\sG}(E)}$. Note that, in all cases, $P_{M, \sG}^{\cS}$ is a polynomial of degree $\rk M - \# \cS$.
\end{remark}

\begin{remark}
If $M$ is realized by a hyperplane arrangement $\cA$, then there is a wonderful model $Y_{\sG}$ determined by the building set $\sG$. The model $Y_{\sG}$ is a compactification of the complement of $\cA$, with strata indexed by the nested set complex $\cN(M, \sG)$ \cite{DeConciniProcesi}. If $D_{\cS}$ is the closure of the stratum in $Y_{\sG}$ corresponding to $\cS \in \cN(M, \sG)$, then $P_{M,\sG}^{\cS}(\bL) = [D_{\cS}] \in K_0(\Var_{\C})$.
\end{remark}

As in \autoref{poincarepolynomialsection}, we shall see that the $\sG$-Euler-Poincar\'{e} polynomial $P_{M, \sG}$ yields the Hilbert series of a certain cohomology ring, the following generalization of $D^\bullet(M)$.

\begin{definition}\cite{FeichtnerYuzvinsky}
Let $M$ be a matroid, and let $\sG$ be a building set in its lattice of flats $\cL$. The \textbf{$\sG$-cohomology ring} of $M$ is the graded algebra
\[
	D^\bullet(M, \sG) = \bigoplus_i D^i(M, \sG) =  \Z[ \{x_G\}_{G \in \sG}] / \mathcal{I}_{\sG},
\]
where each $x_G$ has degree 2, and $\mathcal{I}_{\sG}$ is the ideal generated by
\[
	\prod_{i = 1}^k x_{G_i}, \quad \text{for $\{G_1, \dots, G_k\} \notin \cN(M, \sG)$},
\]
and
\[
	\sum_{G \supseteq A} x_G, \quad \text{for $A$ an atom of $\cL$}.
\]
\end{definition}

We now define analogues of the polynomials $H_M^{\cF}(q)$.

\begin{definition}
Let $M$ be a matroid, and let $\sG$ be a building set in its lattice of flats. For $S \in \cN^*(M, \sG)$, let
\[
	H_{M, \sG}^{\cS}(q) = \sum_{\substack{\cT \in \cN(M, \sG) \\ \cT \cup \cS \in \cN(M, \sG)}} \prod_{F \in \cT} \left( [\rk F - \rk z_{\cT \cup \cS}(F)]_q - 1\right) \in \Z[q]
\]
and for $\cS \in \cN(M, \sG)$, set
\[
	H_{M, \sG}^{\cS}(q) = q^{\# (\fact_{\sG}(E) \setminus \cS)} H_{M, \sG}^{\cS \cup \fact_{\sG}(E)}(q).
\]
Define also
\[
	H_{M, \sG}(q) = H_{M, \sG}^{\fact_{\sG}(E)}(q) = \sum_{\cS \in \cN(M, \sG)} \prod_{F \in \cS} \left( [\rk F - \rk z_{\cS}(F)]_q - 1\right) .
\]
\end{definition}

It follows from \cite[Section~3, Corollary~1]{FeichtnerYuzvinsky} that
\[
	H_{M, \sG}(q^2) = \sum_{i \geq 0} \rk_{\Z} D^i(M, \sG) q^i.
\]
Therefore, the following generalization of \autoref{thm:hilbertseries} may be proved by showing that $P_{M, \sG}(q) = H_{M, \sG}(q)$ for every loopless matroid $M$.

\begin{theorem}
\label{thm:hilbertseriesgeneral}
Let $M$ be a loopless matroid, and let $\sG$ be a building set in its lattice of flats. Then
\[
	P_{M, \sG}(q^2) = \sum_{i \geq 0} \rk_{\Z} D^i(M, \sG) q^i.
\]
\end{theorem}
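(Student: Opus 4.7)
The overall strategy is to adapt the proof of \autoref{thm:hilbertseries}, generalizing \autoref{recursionforpoincarepolynomials}, \autoref{recursionforhilbertseries}, and \autoref{poincarepolynomialrelatedtomonomialbasis} to arbitrary building sets. Since \cite[Section~3, Corollary~1]{FeichtnerYuzvinsky} yields $H_{M,\sG}(q^2) = \sum_i \rk_\Z D^i(M, \sG) q^i$, as the excerpt already notes, it suffices to prove $P_{M,\sG}(q) = H_{M,\sG}(q)$ for every loopless matroid $M$ and every building set $\sG$ in its lattice of flats. The plan is to argue by induction on $\rk M$.

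The first step is a reduction to the case $E \in \sG$. If $E \notin \sG$, let $\fact_\sG(E) = \{F_1, \dots, F_k\}$ with $k \geq 2$. The product structure $[\cl(\emptyset), E] \simeq \prod_i [\cl(\emptyset), F_i]$ forces each $G \in \sG$ to lie below a unique $F_i$, yielding bijections $\cN(M, \sG) \simeq \prod_i \cN(M|F_i, \sG_i)$ and $\cN^*(M, \sG) \simeq \prod_i \cN^*(M|F_i, \sG_i)$, where $\sG_i = \{G \in \sG : G \leq F_i\}$. Combined with \autoref{FSinitialmatroiddecomposition} and the multiplicativity of $\chi$ across direct sums, both sums factor:
\[
	P_{M, \sG}(q) = \prod_i P_{M|F_i, \sG_i}(q), \qquad H_{M, \sG}(q) = \prod_i H_{M|F_i, \sG_i}(q),
\]
so the inductive hypothesis handles this case since $\rk M|F_i < \rk M$.

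The heart of the argument is the case $E \in \sG$, i.e.\ $\fact_\sG(E) = \{E\}$. Here I would establish that both $P_{M, \sG}(q)$ and $H_{M, \sG}(q)$ satisfy the common recursion
\[
	X_{M, \sG}(q) = \bar\chi_M(q) + \sum_{Z \in \widehat{\cL}} \bar\chi_{M/Z}(q)\, X_{M|Z, \sG_Z}(q),
\]
where $\sG_Z = \{G \in \sG : G \leq Z\}$. For $X = P$, write each $\cS \in \cN^*(M, \sG)$ as $\{E\} \sqcup \cS'$ and stratify by $Z = \bigvee \cS' \in \cL$; invoking \autoref{FSinitialmatroiddecomposition} gives $M_\cS = M/Z \oplus (M|Z)_{\cS'}$, and the $\cS'$ with $\bigvee \cS' = Z$ are identified with $\cN^*(M|Z, \sG_Z)$ (using iterated application of the factor axiom to conclude $\fact_{\sG_Z}(Z) \subseteq \cS'$ even when $Z \notin \sG$). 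For $X = H$, I would mimic the proof of \autoref{recursionforhilbertseries}: use \autoref{redcharsub} to rewrite the right-hand side as $[\rk M]_q + \sum_{Z \in \widehat{\cL}} \bar\chi_{M/Z}(H_{M|Z, \sG_Z} - 1)$, re-index the double sum by nested sets $\cS \in \cN(M, \sG) \setminus \{\emptyset\}$ not containing $E$ together with $Z$ with $\bigvee \cS \leq Z \subsetneq E$, apply \autoref{mobius} to replace $\sum_{\bigvee \cS \leq Z \subsetneq E} \bar\chi_{M/Z}$ by $[\rk M - \rk \bigvee \cS]_q$, and match against the direct expansion of $H_{M, \sG}(q)$ obtained by separating the contribution of nested sets containing $E$.

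Applying the inductive hypothesis $P_{M|Z, \sG_Z}(q) = H_{M|Z, \sG_Z}(q)$ in the common recursion then closes the induction, since $\rk M|Z < \rk M$ for $Z \in \widehat{\cL}$. The main obstacle will be verifying, in the $X = P$ recursion, that the stratification $\{\cS' \in \cN(M, \sG) : E \notin \cS',\ \bigvee \cS' = Z\} \simeq \cN^*(M|Z, \sG_Z)$ is correct when $Z \notin \sG$: in that case $\cS'$ has multiple maximal elements whose join equals $Z$, and one must check that every $Z_i \in \fact_{\sG_Z}(Z) = \max \sG_Z$ actually lies in $\cS'$; this reduces (by the building set axiom for $\sG_Z$) to the observation that each factor $Z_i$ lies in $\sG$, so once $\bigvee \cS' = Z$ the nestedness of $\cS'$ together with the product decomposition of $[\cl(\emptyset), Z]$ forces $Z_i \in \cS'$.
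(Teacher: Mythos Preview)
Your proposal is correct and follows essentially the approach the paper indicates: the paper omits the proof of \autoref{thm:hilbertseriesgeneral} but states that it is ``broadly analogous'' to the proof of \autoref{thm:hilbertseries} (via \autoref{recursionforpoincarepolynomials}, \autoref{recursionforhilbertseries}, and \autoref{poincarepolynomialrelatedtomonomialbasis}), with the key new input being \autoref{buildingsetrestrictioncontraction} on induced building sets for matroid minors --- and your recursion over $Z \in \widehat{\cL}$ using $\sG_Z = \sG|Z$ is exactly this generalization. The only point worth flagging is that your argument uses only the restriction part $\sG|X$ of \autoref{buildingsetrestrictioncontraction}, whereas the contraction part $\sG/X$ is needed for the more general \autoref{PequalsH}; for \autoref{thm:hilbertseriesgeneral} itself your reduction-then-recursion scheme is sufficient, and the stratification check you isolate as the ``main obstacle'' (that $\fact_{\sG_Z}(Z) \subseteq \cS'$ whenever $\bigvee \cS' = Z$) goes through exactly as you outline, since the maximal elements of $\cS'$ lying below each factor $Z_i$ must have join $Z_i \in \sG$, forcing a unique maximal element equal to $Z_i$.
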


More generally, the polynomials $P_{M, \sG}^{\cS}$ and $H_{M, \sG}^{\cS}$ agree for any nested set $\cS$.

\begin{theorem}
\label{PequalsH}
Let $M$ be a loopless matroid, let $\sG$ be a building set in $\cL_M$, and let $\cS \in \cN(M, \sG)$ be a nested set. Then
\[
	P_{M, \sG}^{\cS}(q) = H_{M, \sG}^{\cS}(q).
\]
\end{theorem}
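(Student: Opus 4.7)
The plan is to follow the three-step structure of the proof of \autoref{PequalsHmaximalbuildingset}, generalized to arbitrary building sets. First, the identities
\[
	P_{M, \sG}^{\cS}(q) = q^{\#(\fact_{\sG}(E) \setminus \cS)} P_{M, \sG}^{\cS \cup \fact_{\sG}(E)}(q)
\]
and its $H$-analogue are immediate from the definitions, so it suffices to treat $\cS \in \cN^*(M, \sG)$.

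Second, for such $\cS$, \autoref{FSinitialmatroiddecomposition} gives $M_{\cS} = \bigoplus_{F \in \cS} N_F$ with $N_F := M|F/z_{\cS}(F)$. In the Feichtner--Kozlov framework, $\sG$ induces a building set $\sG_F$ on the lattice of flats of each $N_F$ (the images in the interval $(z_\cS(F), F]$ of elements of $\sG$), and one obtains compatible product decompositions of nested-set complexes: $\{\cT \in \cN(M, \sG) : \cT \supseteq \cS\}$ is in bijection with $\prod_{F \in \cS} \cN^*(N_F, \sG_F)$, while $\{\cT \in \cN(M, \sG) : \cT \cup \cS \in \cN(M, \sG)\}$ corresponds to $\prod_{F \in \cS} \cN(N_F, \sG_F)$. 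Combined with multiplicativity of $\chi$ under direct sums, and with compatibility of the local rank statistic $\rk F - \rk z_{\cT \cup \cS}(F)$ with these decompositions, this yields analogues of equation \eqref{poincareatflagproductofpoincare}:
\[
	P_{M, \sG}^{\cS}(q) = \prod_{F \in \cS} P_{N_F, \sG_F}(q), \qquad H_{M, \sG}^{\cS}(q) = \prod_{F \in \cS} H_{N_F, \sG_F}(q).
\]
Thus the theorem is reduced to proving the base identity $P_{M, \sG}(q) = H_{M, \sG}(q)$ for an arbitrary loopless matroid and building set.

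I would prove this base identity by induction on $\#E$, generalizing \autoref{recursionforpoincarepolynomials} and \autoref{recursionforhilbertseries}. Sorting nested sets in $\cN^*(M, \sG)$ (respectively $\cN(M, \sG)$) by the subset of elements sitting immediately below each $F_i \in \fact_\sG(E) = \{F_1,\dots,F_k\}$ and applying the factorization of step two strips off a single layer, yielding recursions for $P_{M, \sG}$ and $H_{M, \sG}$ in terms of reduced characteristic polynomials of quotients $M/F$ and analogous quantities on restrictions $M|F$; \autoref{redcharsub} then shows the two recursions coincide. The main obstacle is the combinatorial verification in step two: checking rigorously from the building-set axioms that $\sG_F$ is a building set on the lattice of flats of $N_F$, and that both product decompositions of nested sets hold compatibly with $\chi$-multiplicativity and with the local rank data defining $H_{M, \sG}^{\cS}$. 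Once this factorization is established, the remainder of the argument proceeds along the template of the maximal-building-set case.
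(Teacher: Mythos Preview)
Your proposal is correct and follows essentially the same approach as the paper. The paper omits the detailed proof, stating only that it is ``broadly analogous'' to the maximal-building-set case (\autoref{PequalsHmaximalbuildingset}) and singling out \autoref{buildingsetrestrictioncontraction} (the induced building sets $\sG|X$ and $\sG/X$ on matroid minors) as the key technical ingredient needed to make the generalization go through---exactly the obstacle you identify in your step two.
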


The proofs of \autoref{thm:hilbertseriesgeneral} and \autoref{PequalsH} are broadly analogous to the proofs of the special cases,  \autoref{thm:hilbertseries} and \autoref{PequalsHmaximalbuildingset}, respectively, given above.  For brevity, we omit these proofs here.  However, we note that in order to generalize the methods used for $\sG_{\max}$ to arbitrary building sets, it is useful to understand how a building set for $M$ gives rise to a building set for any matroid minor of $M$.  The following proposition accomplishes this.

\begin{proposition}
\label{buildingsetrestrictioncontraction}
Let $M$ be a matroid on ground set $E$, let $\sG$ be a building set in its lattice of flats $\cL$, and let $X \in \cL$ be a flat. Then
\[
	\sG|X = \sG_{\leq X} = \{ G \in \sG \mid G \subseteq X \}
\]
is a building set in the lattice of flats of the restriction $M|X$, and
\[
	\sG/X = \{ G \vee X \mid G \in \sG \setminus \sG_{\leq X} \}
\]
is a building set in the lattice of flats of the contraction $M/X$.
\end{proposition}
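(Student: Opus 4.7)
The plan is to handle the two halves separately, with the restriction case essentially immediate and the contraction case requiring some work to translate the factorization isomorphism $\varphi_Y$ for $\sG$ into a factorization isomorphism for $\sG/X$.

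For the restriction, I would observe that the lattice of flats of $M|X$ is the interval $[\cl(\emptyset), X] \subseteq \cL$. Let $Y \in [\cl(\emptyset), X]$ with $Y > \cl(\emptyset)$. Since $Y \subseteq X$, we have $(\sG|X)_{\leq Y} = \sG_{\leq Y}$, so $\max(\sG|X)_{\leq Y} = \fact_{\sG}(Y)$. The factorization isomorphism required by the building set property of $\sG|X$ at $Y$ is then literally the isomorphism $\varphi_Y$ provided by the building set property of $\sG$ at $Y$. This half needs no additional argument.

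For the contraction, I would identify the lattice of flats of $M/X$ with the interval $[X, E] \subseteq \cL$, whose minimum is $X$. Fix $Y \in [X, E]$ with $Y > X$, write $\fact_{\sG}(Y) = \{G_1, \ldots, G_k\}$, and let $\varphi_Y \colon \prod_i [\cl(\emptyset), G_i] \xrightarrow{\sim} [\cl(\emptyset), Y]$ be the building set isomorphism. Let $(X_1, \ldots, X_k) = \varphi_Y^{-1}(X)$, so that $X = X_1 \vee \cdots \vee X_k$ and $X_i = G_i$ precisely when $G_i \subseteq X$. The key claims to verify are:
\begin{enumerate}
\item[(a)] $\max_{M/X}(\sG/X)_{\leq Y} = \{X \vee G_i : G_i \in \fact_{\sG}(Y),\ G_i \not\subseteq X\}$.
\item[(b)] The isomorphism $\varphi_Y$ restricts to an isomorphism
\[
	\prod_{i\,:\,G_i \not\subseteq X} [X_i, G_i] \ \xrightarrow{\sim} \ [X, Y],
\]
which, combined with the identifications $[X_i, G_i] \cong [X, X \vee G_i]$ (the $i$-th coordinate embedding), gives the required factorization for $\sG/X$ at $Y$.
\end{enumerate}

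For (a), any element of $(\sG/X)_{\leq Y}$ is of the form $X \vee G$ with $G \in \sG_{\leq Y} \setminus \sG_{\leq X}$, and any such $G$ is contained in some $G_j \in \fact_{\sG}(Y)$, with $G_j \not\subseteq X$ forced by $G \not\subseteq X$; hence $X \vee G \subseteq X \vee G_j$. Pairwise incomparability of the $X \vee G_i$ with $G_i \not\subseteq X$ follows from the product decomposition: under $\varphi_Y$, $X \vee G_i$ corresponds to the tuple with $G_i$ in slot $i$ and $X_j$ in slots $j \neq i$, and for $G_i \not\subseteq X$ we have $X_i \subsetneq G_i$, so two such tuples are incomparable. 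For (b), the factors with $G_i \subseteq X$ contribute trivial intervals (single points), so they may be discarded, and the remaining product maps isomorphically onto $[X, Y]$ via $\varphi_Y$; the identification of each factor with $[X, X \vee G_i]$ is routine.

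The main obstacle is (b), specifically checking that the coordinate-wise identification $[X_i, G_i] \cong [X, X \vee G_i]$ is compatible with $\varphi_Y$ in the way required by the definition of a building set, i.e., that the generator $X \vee G_i$ of the $i$-th factor in the product for $\sG/X$ is sent to $X \vee G_i \in [X, Y]$. This reduces to the fact that, under $\varphi_Y$, the join operation in the product coincides with the join in $\cL$, which is a formal consequence of $\varphi_Y$ being a lattice isomorphism together with $X = \bigvee_i X_i$. Once this compatibility is in place, the building set axiom for $\sG/X$ at $Y$ follows immediately.
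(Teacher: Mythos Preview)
Your argument is correct. The paper does not actually give a proof of this proposition; it only remarks that the result ``may be proved using the main results of \cite{FeichtnerKozlov}.'' You have instead supplied a direct, self-contained verification of the building set axioms for $\sG|X$ and $\sG/X$, working explicitly with the factorization isomorphism $\varphi_Y$ and the coordinate decomposition $X = X_1 \vee \cdots \vee X_k$. Both halves of your proof are sound: the restriction case is indeed immediate, and in the contraction case your identification of $\max(\sG/X)_{\leq Y}$ with $\{X \vee G_i : G_i \not\subseteq X\}$ together with the product-interval argument for the isomorphism $\prod_{G_i \not\subseteq X}[X, X \vee G_i] \xrightarrow{\sim} [X,Y]$ checks out, including the compatibility condition that the generator in each slot maps to itself. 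Your approach has the advantage of being entirely internal to the paper rather than deferring to an external source; the citation approach has the advantage of brevity and of situating the statement in the broader Feichtner--Kozlov framework.
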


We remark that \autoref{buildingsetrestrictioncontraction} may be proved using the main results of \cite{FeichtnerKozlov}.

\subsection{Building sets and the topological zeta function}

In \cite{vanderVeer}, the topological zeta function of a finite, ranked, atomic lattice $\cL$ with respect to a building set $\sG$ is defined to be
\[
	Z_{\cL, \sG}(s) = \sum_{\cS \in \cN(M, \sG)} \sum_{\substack{\cT \in \cN(M, \sG) \\ \cT \supseteq \cS}} (-1)^{\# \cT - \# \cS} H_{M, \sG}^{\cT}(1) \prod_{F \in \cS} \frac{1}{(\# F)s + \rk F}.
\]
The main theorem of \cite{vanderVeer} is that this definition is independent of the choice of building set $\sG$. In the case where $\cL$ is the lattice of flats of a simple matroid $M$, the results of this section provide an alternate proof of this independence, as follows.

By definition of $P_{M, \sG}^{\cS}$ and M\"obius inversion in $\cN(M, \sG)$, we have
\[
	\frac{\chi_{M_{\cS}}(q)}{(q-1)^{\# \cS}} = \sum_{\substack{\cT \in \cN(M, \sG) \\ \cT \supseteq \cS}} (-1)^{\# \cT - \# \cS} P_{M, \sG}^{\cT}(q).
\]
Therefore, by \autoref{rationalformulaformotiviczetafunction} and \autoref{PequalsH}, we have
\begin{align*}
	Z_M^{\topo}(s)
	&= \sum_{\cS \in \cN(M,\sG)} \frac{\chi_{M_{\cS}}(q)}{(q-1)^{\#\cS}} \Big|_{q=1} \prod_{F \in \cS } \frac{1}{(\# F)s + \rk F} \\
	&= \sum_{\cS \in \cN(M,\sG)} \sum_{\substack{\cT \in \cN(M, \sG) \\ \cT \supseteq \cS}} (-1)^{\# \cT - \# \cS} P_{M, \sG}^{\cT}(1) \prod_{F \in \cS } \frac{1}{(\# F)s + \rk F} \\
	&= Z_{\cL, \sG}(s).
\end{align*}
That is, the various rational expressions given by van der Veer are equal because each is equal to $\mu_{\topo} (Z_M(q,T))$.

\bibliographystyle{alpha}
\bibliography{MZFM}

\end{document}